\def\*#1{\mathbf{#1}}
\newcommand\mc{\mathcal}
\newcommand\ms{\mathscr}
\newcommand\bb{\mathbb}
\newcommand\epsi{\varepsilon}
\newcommand\bs{\setminus}
\newcommand\cl{\overline}
\newcommand\del{\partial}
\newcommand*\dif{\mathop{}\!\mathrm{d}}
\DeclareMathOperator{\vol}{Vol}
\DeclareMathOperator{\diam}{diam}
\DeclareMathOperator{\Isom}{Isom}
\newcommand\td{\widetilde}
\newcommand*\deffont[1]{\textbf{\textit{#1}}}
\newtheorem{thm}[equation]{Theorem}
\newtheorem{cor}[equation]{Corollary}
\newtheorem{prop}[equation]{Proposition}
\newtheorem{lem}[equation]{Lemma}
\newtheorem{conj}[equation]{Conjecture}
\theoremstyle{definition}
\newtheorem{defn}[equation]{Definition}
\newtheorem{exmps}[equation]{Examples}
\theoremstyle{remark}
\newtheorem{rmk}[equation]{Remark}
\numberwithin{equation}{section}
\title[The Eden model on graphs and tessellations of manifolds]{Local behavior of the Eden model on graphs and tessellations of manifolds}
\author[D. (M.) Hua]{Dongming (Merrick) Hua}
\email{dongming@ucsb.edu}
\author[F. Manin]{Fedor Manin}
\email{manin@math.ucsb.edu}
\author[T. Queer]{Tahda Queer}
\email{taqueer@proton.me}
\author[T. Wang]{Tianyi Wang}
\email{tianyiwang@ucsb.edu}
\begin{document}

\begin{abstract}
    The Eden Model in $\mathbb{R}^n$ constructs a blob as follows: initially a single unit hypercube is infected, and each second a hypercube adjacent to the infected ones is selected randomly and infected. Manin, Rold\'{a}n, and Schweinhart investigated the topology of the Eden model in $\mathbb{R}^{n}$ by considering the possible shapes which can appear on the boundary. In particular, they give probabilistic lower bounds on the Betti numbers of the Eden model.

    In this paper, we prove analogous results for the Eden model on any infinite, vertex-transitive, locally finite graph: with high probability as time goes to infinity, every ``possible'' subgraph (with mild conditions on what ``possible'' means) occurs on the boundary of the Eden model at least a number of times proportional to an isoperimetric profile of the graph. Using this, we can extend the results about the topology of the Eden model to non-Euclidean spaces, such as hyperbolic $n$-space and universal covers of certain Riemannian manifolds. 
\end{abstract}

\maketitle

\section{Introduction}
In 1961, Murray Eden proposed a stochastic model to simulate the growth of a bacterial colony or tumor on flat surfaces, known as the Eden growth model, as follows.  Tessellate $\mathbb{R}^n$ with unit cubes. Choose a starting cube to ``infect'' as the origin. Then, at each time step, choose a new cube at random to infect out of those adjacent to the infected cubes.

By the Cox--Durrett shape theorem \cite{cox}, the growth of the Eden model is ball-like (rather than fractal, as one might guess).  That is, over time, the model (after rescaling by $t^{-1/n}$) converges to a specific convex set in $\mathbb R^n$.  However, this leaves a region near the boundary of the convex set, of thickness $\sim \sqrt{t^{-1/n}}$, in which the model can behave in very complicated ways.  Understanding the topology of the Eden model is one way of measuring the amount of complexity in this region, along with e.g.~the area of the boundary studied by Damron, Hanson, and Lam \cite{DHL}.

In previous work by Manin, Rold\'{a}n, and Schweinhart \cite{MRS}, the homology of the Eden model near the boundary in $\bb{R}^n$ was studied in detail. For every $1\leq k\leq n-1$, they proved an asymptotic lower bound for the $k$th Betti number $\beta_k(t)$ of the Eden model at time $t$. More specifically, they showed that there is a constant $C=C(n,k)>0$ such that
\[\beta_k(t)\geq Ct^{\frac{n-1}{n}}\]
with high probability as $t\to\infty$.  This work has recently been strengthened in some cases and extended to other first-passage percolation models by Damron et al.\ \cite{DGLS}.

\begin{figure}
    \centering
    \includegraphics[scale=0.26]{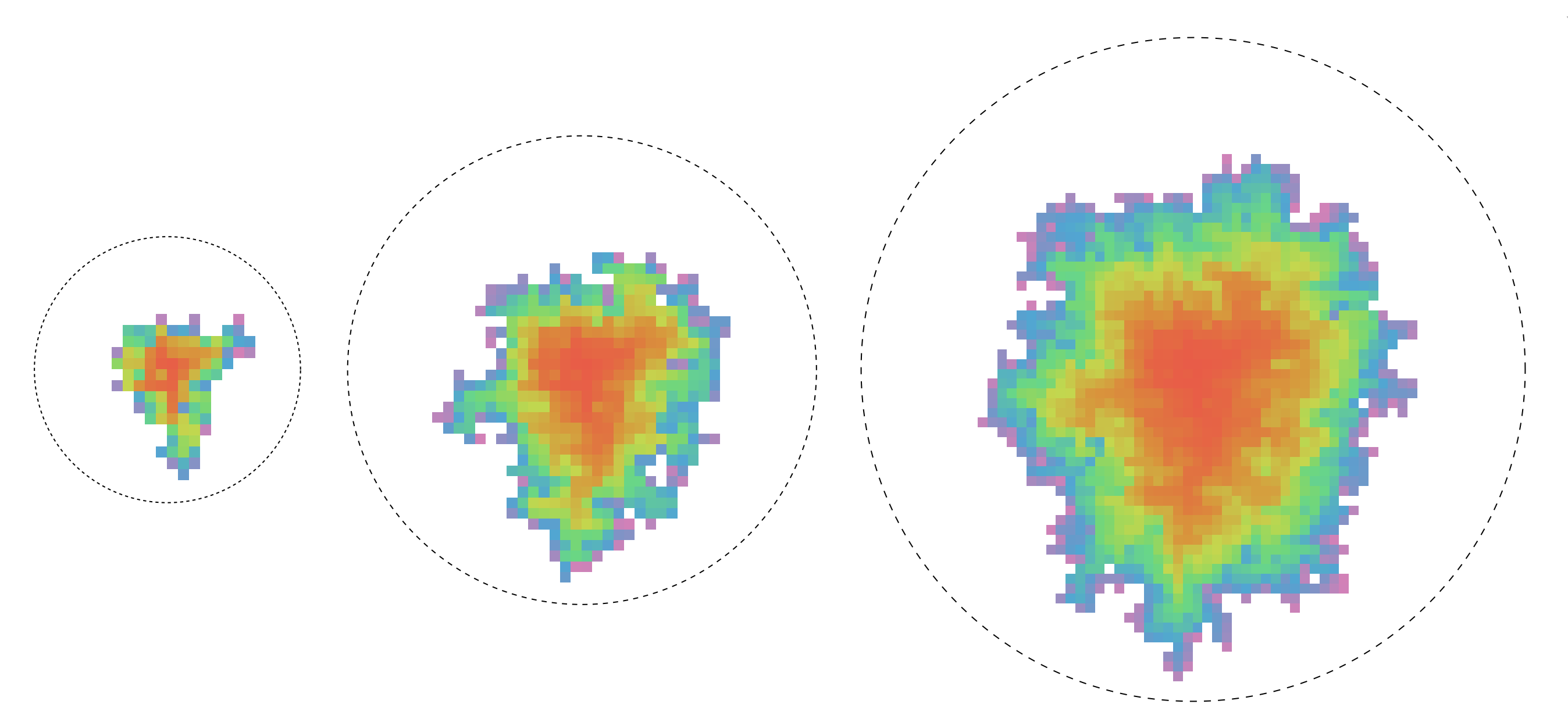}
    \caption{A demonstration of the Eden growth model after 100, 500, and 1500 time steps respectively. This demonstration is written in the Wolfram Language, and is available \href{https://demonstrations.wolfram.com/TumorGrowthModel/}{here}.}
\end{figure}

The definition of the Eden model naturally extends from the cubical lattice in $\mathbb R^n$ to other graphs.  Cayley graphs of groups are a natural setting because like the cubical lattice, which is a Cayley graph of the group $\mathbb Z^n$, they are homogeneous: they look the same from the point of view of every vertex.  Such generalized models have been studied in the context of first-passage percolation on Cayley graphs, mostly in the past decade, although interest in percolation on Cayley graphs more generally dates back earlier \cite{BSch}.  This work has uncovered a contrast based on the growth of the group.  Many of the phenomena first discovered for first-passage percolation in $\mathbb Z^n$ generalize to Cayley graphs of virtually nilpotent groups (equivalently, by Gromov's theorem, groups of polynomial growth), but often not to those of groups of higher growth \cite{BT1,AuGo,Gorski}.  For example, the shape theorem has an analogue for Cayley graphs of virtually nilpotent groups, but in groups of exponential growth no such convergence result can hold \cite{BT1}.  Moreover, first-passage percolation in hyperbolic groups exhibits bi-infinite geodesics, a feature conjectured not to occur in $\mathbb Z^n$ \cite{BT2}.

In this paper, we generalize the results of \cite{MRS} beyond $\mathbb{R}^n$. We first study the Eden model $A(t)$ on an infinite, locally finite, connected, vertex-transitive graph $T$.  This class includes Cayley graphs, but also other examples such as Diestel--Leader graphs which are not Cayley graphs and not even quasi-isometric to a Cayley graph \cite{DL,EFW}.  Subsets of a graph do not have higher-dimensional topology; instead, we prove that certain local patterns occur many times on the perimeter of the Eden model with high probability.  To be precise:
\begin{thm}\label{intro-graphs}
    Let $T$ be an infinite, locally finite, connected, vertex-transitive graph where each vertex has degree $d$.  Let $S$ be any connected subgraph of $T$ which satisfies the following:
    \begin{itemize}
        \item $S$ is contained in the $R$-ball centered at a vertex $x$ of $T$,
        \item $S$ contains the inner boundary of this $R$-ball (all vertices at distance $R$ from $x$).
    \end{itemize}
    Let $A(t)$ denote the Eden model on $T$ at time $t$.  There is a constant $C(R,d)>0$ so that with high probability as $t\to\infty$, there are at least $CF(t)$ disjoint $R$-balls in $T$ whose intersection with $A(t)$ is isomorphic to $S$.
\end{thm}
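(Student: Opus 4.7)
The strategy is to combine a boundary packing argument with a local probability estimate, using the continuous-time Eden model for decoupling. In continuous time each edge from an infected to an uninfected vertex carries an independent rate-one exponential clock; this agrees in distribution with the discrete model at the stopping times $\{|A|=n\}$, and it has the crucial property that evolutions inside disjoint vertex regions are conditionally independent given the configuration outside.

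Assuming (as should be established earlier in the paper) that $F(t)$ is a high-probability lower bound on $|\del A(t)|$ valid uniformly on a short window $[t-\tau, t]$ with $\tau = \tau(R,d)$, I would apply a greedy packing in $\del A(t-\tau)$ to obtain $N \geq c_1 F(t)$ vertices $y_1,\ldots,y_N$ whose $3R$-balls are pairwise disjoint; vertex-transitivity and local finiteness guarantee that $R$-balls have a fixed finite size, so the packing loses only a constant factor. By choosing $x_i$ at distance $R$ from $y_i$, each $y_i$ lies on the inner boundary of a ball $B_R(x_i)$, and the $B_R(x_i)$ remain pairwise disjoint. I would then restrict to those (typical) $y_i$ for which $y_i$ is the \emph{only} vertex of $B_R(x_i)$ currently in $A(t-\tau)$.

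The heart of the argument is a \emph{seed lemma}: conditioned on the configuration outside $B_R(x_i)$ up to time $t-\tau$ and on this single-seed event, the probability of $\{A(t) \cap B_R(x_i) = S\}$ is at least some $p(R,d)>0$. Because $S$ is connected and contains the inner boundary of $B_R(x_i)$, there is a BFS enumeration $y_i = s_1, s_2, \ldots, s_{|S|}$ of $S$ in which each $s_{j+1}$ is adjacent to $\{s_1,\ldots,s_j\}$. Splitting the window as $[t-\tau, t-\tau/2]\cup[t-\tau/2, t]$, I ask for (a) the clocks infecting $s_2,\ldots,s_{|S|}$ to fire in the correct order during the first sub-window; (b) no clock from the currently-infected set into $B_R(x_i)\bs S$ to fire in the full window; and (c) no clock inside $B_R(x_i)$ to fire in the second sub-window. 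Only $O_R(1)$ clocks are involved, so the joint probability is a positive constant. Disjointness of the balls makes the $N$ seed events conditionally independent given the outside, and a Chernoff bound yields at least $CF(t)$ successes with probability tending to $1$.

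The main obstacle is the seed lemma, and specifically the \emph{timing}: we need $A(t) \cap B_R(x_i) = S$ at the exact sampling time $t$, not merely at some moment during the window. The building-then-freezing decomposition above is designed for this, but it forces a careful choice of the single-seed configuration at time $t-\tau$. A secondary technical point is showing that $\Omega(F(t))$ single-seed boundary vertices exist at time $t-\tau$; I expect this to follow from standard first-passage estimates, since any vertex infected in a short time preceding $t-\tau$ is automatically isolated within its $R$-neighborhood with constant probability. Once these local estimates are in place, the conditional independence afforded by the continuous-time clocks makes the concentration step routine.
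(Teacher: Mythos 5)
Your outline is the same as the paper's in its broad strokes: pack $\Omega(F)$ disjoint $R$-balls along the boundary of the cluster at a time $O(1)$ before $t$, force $A(t)\cap B_R(x_i)=S$ inside each ball by an event depending only on that ball's randomness and having probability bounded below by a constant $p(R,d)$, and conclude by independence across disjoint balls plus the law of large numbers. The ``build-then-freeze'' local event is essentially the paper's event $X_j$ (which prescribes passage times $\le \epsi$ on the copy of $S$ and $\ge 3$ on the rest of the ball), and the window decomposition serves the same purpose of hitting $S$ exactly at time $t$. However, two of the steps you defer or wave at are where the real work lies, and one of them is formulated in a way that would fail.

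First, the single-seed condition. As written it is contradictory: a vertex $y_i\in\partial A(t-\tau)$ is by definition \emph{not} in $A(t-\tau)$, so it cannot be ``the only vertex of $B_R(x_i)$ currently in $A(t-\tau)$.'' In the intended reading ($B_R(x_i)\cap A(t-\tau)$ is a single infected vertex, or is empty with a unique entry point), this is a nontrivial statement about the local geometry of the cluster near its boundary — you must rule out the cluster protruding into or wrapping around the ball — and it does not follow from ``standard first-passage estimates'' on a general vertex-transitive graph. The paper avoids the issue entirely: it centers the balls at distance exactly $R+1$ from $A(t-2)$, so each ball is disjoint from $A(t-2)$ and every contact vertex automatically lies on the outer sphere $D_R(x_i)$, which is contained in $S$ by hypothesis; no seed inside the ball and no ordering of infections is needed, since tiny passage times on the connected set $S$ and large passage times on $B_R(x_i)\setminus S$ force the conclusion deterministically once the ball becomes adjacent to the cluster. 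Second, the assumption that $|\partial A|$ stays $\gtrsim F(t)$ uniformly over the window $[t-\tau,t]$ is not a citation to earlier material — it is the hardest ingredient of the proof. The deterministic bound $|\partial B_R(A)|\ge F(|A|)$ is immediate from the definition of $F$, but showing $F(|A(t-\tau)|)\ge c\,F(|A(t)|)$, i.e.\ that the cluster grows by at most a constant factor in constant time, requires the paper's coupling of the boundary growth with a family of subcritical Galton--Watson-type trees (Lemma 3.10). A minor further point: the Eden model that selects a uniformly random vertex adjacent to the cluster corresponds to \emph{site} FPP with exponential passage times, not to independent rate-one clocks on boundary edges, which would instead weight each candidate vertex by its number of infected neighbors.
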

Here $F(N)$ is the \deffont{isoperimetric profile} of $T$: the minimal boundary of a set of $N$ vertices in $T$.  This is a quasi-isometry invariant (up to a multiplicative constant) of $T$ closely related to the F\o lner function studied e.g.~in \cite{Ersch}.  Notably, the actual boundary of $A(t)$ may grow much faster than the isoperimetric profile.  For example, for amenable groups of exponential growth, such as solvable groups which are not virtually nilpotent, even the perimeter of a ball grows faster.  Specifically, if $T$ is a lattice in the three-dimensional Lie group $\mathrm{SOL}$, then the perimeter of a ball is linear in its volume since both are exponential in the radius, but $F(N) \sim N/\log N$.  It would be interesting to strengthen the bound for such groups.

The proof of Theorem \ref{intro-graphs} uses the reformulation of the Eden model as a first-passage percolation model with passage times distributed according to the exponential distribution.  Our proof uses the memorylessness of the exponential distribution, and another direction for further study is to prove similar results for other distributions, perhaps using the methods of \cite{DGLS}.

We apply Theorem \ref{intro-graphs} to obtain topological results similar to those of \cite{MRS}.  Those results consider the Eden model as defining a random domain in $\mathbb R^n$ consisting of a union of cubical tiles.  Therefore, to generalize them, one looks for other spaces that admit natural tessellations.  The best-known (and, in a precise sense, most symmetric) such space is hyperbolic space: for example, for any pair $(r,s)$, the regular $r$-gon tessellates either the Euclidean plane, the sphere, or the hyperbolic plane with $s$ meeting at every vertex.  Although they are harder to classify, all higher-dimensional hyperbolic spaces also admit infinite families of inequivalent tessellations by convex polyhedra, making this setting considerably richer than the Euclidean one, where there are only finitely many types of tessellation.  The Eden model on such a tessellation is defined similarly to the usual Euclidean Eden model: one starts with a single infected cell, and at each time step, one chooses a random adjacent tile to infect.

\begin{thm} \label{hyperbolic intro}
  Consider a regular tessellation of the hyperbolic $n$-space $\bb H^n$ by compact, convex tiles.  (Here, \deffont{regular} means that the symmetry group of the tessellation acts transitively on the cells.)  Let $A(t)$ be the Eden model at time $t$ on this tessellation, and let $\beta_k(t)$ denote the $k$th Betti number of $A(t)$.  Then for every $1 \leq k \leq n-1$, there are constants $C>c>0$ depending on $k$ and the tessellation such that
  \[ct \leq \beta_k(t) \leq Ct\]
  with high probability as $t \to \infty$.
\end{thm}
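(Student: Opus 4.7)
The proof splits into a deterministic upper bound and a probabilistic lower bound. For the upper bound, $A(t)$ is a subcomplex of the tessellation with exactly $t$ top-dimensional tiles; since the tessellation is regular, each tile contributes a bounded number of $k$-cells and each $k$-cell is shared between boundedly many tiles. Hence $A(t)$ has $O(t)$ cells of each dimension, so $\beta_k(A(t)) \leq \#\{k\text{-cells of }A(t)\} \leq Ct$ deterministically, with $C$ depending only on the tessellation.

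For the lower bound, the plan is to apply Theorem \ref{intro-graphs} to the dual graph $T$ of the tessellation, which is infinite, locally finite, connected, and vertex-transitive by regularity. Following the strategy of \cite{MRS}, for each $1 \leq k \leq n-1$ I would construct a connected local configuration $S_k$ whose occurrence on the boundary of $A(t)$ forces a nontrivial $k$-cycle. Take $R$ large and pick a collection of tiles $K_k$ at distance strictly less than $R$ from $x$ whose geometric realization in $\bb H^n$ is homotopy equivalent to $S^{n-k-1}$: a single interior tile for $k=n-1$, and combinatorial spheres of codimension $k+1$ for smaller $k$. Set $S_k = B_R(x) \setminus K_k$; this is a connected subgraph of $T$ containing the inner boundary of $B_R(x)$, so it satisfies the hypotheses of Theorem \ref{intro-graphs}, and whenever $A(t) \cap B_R(x) = S_k$ the arrangement of tiles contains a $k$-cycle $\gamma$ linking $K_k$. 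Constructing the required $K_k$ inside any regular tessellation of $\bb H^n$, provided $R$ is large enough, is a combinatorial exercise paralleling the explicit constructions in \cite{MRS}.

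Once $S_k$ is fixed, Theorem \ref{intro-graphs} yields with high probability at least $c F(t)$ disjoint $R$-balls $B_R^{(i)}$ on which $A(t)$ induces a copy of $S_k$, producing $k$-cycles $\gamma_i$ around the obstructions $K_k^{(i)}$. Since $\bb H^n$ is non-amenable, $T$ inherits a linear isoperimetric profile $F(N) \gtrsim N$, giving $\gtrsim t$ occurrences. The main obstacle is the linear independence of the classes $[\gamma_i] \in H_k(A(t); \bb Q)$, mirroring the argument of \cite{MRS}. For this, consider the map $H_k(A(t)) \to \bigoplus_i H_k(\bb H^n \setminus K_k^{(i)})$ induced by inclusions, valid since $A(t) \cap K_k^{(i)} = \emptyset$. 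By disjointness of the $B_R^{(i)}$, each $\gamma_j$ with $j \neq i$ bounds a $(k+1)$-chain supported near $K_k^{(j)}$ and hence away from $K_k^{(i)}$, so $[\gamma_j] \mapsto 0$ in the $i$th summand; meanwhile $[\gamma_i]$ maps to a generator of $H_k(\bb H^n \setminus K_k^{(i)}) \cong \bb Z$ by local Alexander duality. Thus the $[\gamma_i]$ are linearly independent and $\beta_k(A(t)) \geq ct$ with high probability, completing the proof.
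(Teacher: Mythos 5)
Your upper bound and your linear-independence argument are both sound. The upper bound is a minor variant of the paper's (the paper counts simplices of the nerve of the cover by tiles, using convexity to invoke the nerve theorem; you count cells of the polyhedral complex directly). Your independence argument -- mapping $H_k(A(t))$ to $\bigoplus_i H_k(\bb H^n \setminus K_k^{(i)})$ and observing the resulting matrix is triangular -- is a legitimate alternative to the paper's route, which instead decomposes $A(t)$ by Mayer--Vietoris into a piece $M$ away from the handles and a disjoint union $N$ of handle neighborhoods, uses $H_k(M\cap N)\cong H_k(\coprod S^{n-1})=0$ for $k\le n-2$ to inject $H_k(N)$ into $H_k(A(t))$, and then computes $H_k$ of a single handle by a deformation retraction onto ``sphere $\cup$ strut.'' Your version is closer to the original argument of \cite{MRS} and is fine as far as it goes.

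The genuine gap is the sentence dismissing the construction of $K_k$ as ``a combinatorial exercise paralleling \cite{MRS}.'' That construction is the technical heart of the proof and occupies all of Section 5 of the paper; it does not transplant from the cubical lattice. You must produce, inside an \emph{arbitrary} regular tessellation of $\bb H^n$, a union of tiles $K_k$ with $\tilde H^{n-k-1}(K_k)\cong\bb Z$ (homotopy equivalence to $S^{n-k-1}$ is what your Alexander-duality step needs, and a union of convex tiles meeting a round sphere need not be homotopy equivalent to that sphere), such that $B_R(x)\setminus K_k$ is connected, contains the inner boundary of the ball, \emph{and} its geometric realization contains a $k$-cycle actually linking $K_k$ -- i.e., the ``strut'' through the hole must be realizable by tiles disjoint from $K_k$. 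In hyperbolic space this last point requires a quantitative divergence estimate between the orthogonal subspaces $P^k$ and $P^{n-k}$ (Lemma \ref{hyperbolic divergence}) together with a careful nesting of graph-metric and hyperbolic-metric balls (Lemma \ref{layers of balls}); the paper ultimately sidesteps the homotopy type of the obstruction altogether by removing a larger set $J$ and deformation-retracting the complement onto $S_{R_2+\epsi}\cup(B_{R_2+\epsi}\cap P^k)$. A second, related omission: Theorem \ref{intro-graphs} only produces subgraphs \emph{isomorphic} to $S_k$, and an abstract graph isomorphism does not guarantee that the corresponding unions of tiles in $\bb H^n$ are homeomorphic or contain linking cycles; you need the strengthening of Remark \ref{rmk:strengthening}, which ensures the copies are translates under the tessellation's symmetry group. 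Without supplying these steps the lower bound is not proved.
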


Here the growth is linear since hyperbolic spaces have a linear isoperimetric inequality: the perimeter of every set of tiles is linear in its volume.

We also discuss a generalization that encompasses both the hyperbolic and Euclidean settings.  We can think of the unit cubes of the Eden model on $\mathbb R^n$ as fundamental domains of the universal cover of $T^n$.  Similarly, the tiles of many hyperbolic tessellations are fundamental domains of the universal cover of a hyperbolic manifold.  For any compact manifold $M$, we define a similar Eden model which builds random subsets---unions of adjacent fundamental domains---of the universal cover of $M$.  We prove a result similar to Theorem \ref{hyperbolic intro} for the class of manifolds with non-positive sectional curvature.  This class includes the Euclidean (curvature zero) and hyperbolic (constant curvature $-1$) cases, but also a large class of tessellations of manifolds which do not admit any constant curvature metric, such as symmetric spaces of noncompact type, as well as countless examples with less local symmetry.
\begin{thm} \label{NPC intro}
  Let $M$ be a closed Riemannian manifold of non-positive sectional curvature, and let $A(t)$ be the Eden model at time $t$ on the universal cover $\widetilde M$.  Let $\beta_k(t)$ denote the $k$th Betti number of $A(t)$ with coefficients in $\mathbb Z/2\mathbb Z$, and let $F(N)$ denote the isoperimetric profile of the Cayley graph of $\pi_1(M)$.\footnote{We show in Appendix \ref{appendix} that the isoperimetric profiles of two Cayley graphs of the same group differ only by a multiplicative constant.}  Then for every $1 \leq k \leq \dim M-1$ there is a constant $C=C(M,k)>0$ such that
  \[\beta_k(t) \geq CF(t)\]
  with high probability as $t \to \infty$.

  In particular, if $M$ is negatively curved, then $\beta_k(t)$ grows linearly in $t$ (since growth of the lower bound matches that of the trivial upper bound).
\end{thm}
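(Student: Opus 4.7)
My plan is to reduce Theorem \ref{NPC intro} to Theorem \ref{intro-graphs} by constructing, for each $1 \leq k \leq n-1$ (where $n = \dim M$), a specific local configuration $S_k$ on the adjacency graph $T$ of the tessellation of $\widetilde M$ by fundamental domains of $\pi_1(M)$, such that every disjoint occurrence of $S_k$ on the boundary of $A(t)$ yields a linearly independent class in $H_k(A(t); \mathbb{Z}/2\mathbb{Z})$. The graph $T$ is vertex-transitive (because $\pi_1(M)$ acts transitively on the fundamental domains by deck isometries), locally finite, and quasi-isometric to the Cayley graph of $\pi_1(M)$. By the appendix, the isoperimetric profile of $T$ agrees with $F$ up to a multiplicative constant, so Theorem \ref{intro-graphs} applied to $T$ and $S_k$ produces at least $C' F(t)$ disjoint copies of $S_k$ with high probability.

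Construction of $S_k$: pick $R_k$ large enough that the union $U$ of fundamental domains in an $R_k$-ball of $T$ contains and is contained in concentric geodesic balls of $\widetilde M$ of comparable radii. By Cartan--Hadamard, $\widetilde M$ is diffeomorphic to $\mathbb{R}^n$, so $U$ is a topological $n$-ball. Within $U$ select a small \emph{defect} $D_k$, a union of a bounded number of fundamental domains near the center, whose complement in $U$ deformation-retracts onto a $k$-sphere; this is a purely local combinatorial/topological construction identical to the one carried out in \cite{MRS}. Define $S_k$ to be the collection of fundamental domains of $U$ that are \emph{not} in $D_k$. By construction $S_k$ automatically contains the inner boundary of the $R_k$-ball, as required by Theorem \ref{intro-graphs}.

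Whenever $A(t) \cap B = S_k$ for an $R_k$-ball $B \subset T$, the tiles of $D_k$ in $B$ form a bounded region lying in the complement of $A(t)$, and its topological boundary in $A(t) \cap B$ contains a designated $k$-cycle $\sigma$ representing the generator of $H_k(A(t) \cap B; \mathbb{Z}/2\mathbb{Z}) \cong \mathbb{Z}/2\mathbb{Z}$. To verify that $\sigma$ is nontrivial in $H_k(A(t); \mathbb{Z}/2\mathbb{Z})$, we invoke Alexander duality in the contractible ambient space $\widetilde M \cong \mathbb{R}^n$: any $(k+1)$-chain in $A(t)$ filling $\sigma$ would intersect a transverse $(n-k-1)$-disk placed inside $D_k$, but this disk lies in the complement of $A(t)$. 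The same duality produces $(n-k-1)$-cocycles dual to the cycles from disjoint balls $B_1, \ldots, B_m$ whose supports lie in the corresponding defects $D_{k,i}$; since these supports are pairwise disjoint, the dual cocycles are linearly independent, which forces the classes $[\sigma_i]$ to be linearly independent in $H_k(A(t); \mathbb{Z}/2\mathbb{Z})$.

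The main obstacle is the global Alexander-duality step: the non-triviality of each $\sigma$ inside $A(t) \cap B$ is straightforward, but propagating it to the full Eden model requires ruling out fillings that leave $B$ and return. This is precisely where non-positive curvature enters: the Cartan--Hadamard diffeomorphism $\widetilde M \cong \mathbb{R}^n$ lets us transport the Euclidean duality arguments of \cite{MRS} to our setting, provided we verify that the transverse disk and the cocycles can be chosen with the geometric control afforded by the bounded diameter of a fundamental domain and the cocompactness of the $\pi_1(M)$-action. The trivial upper bound $\beta_k(t) \leq Ct$ (using that $A(t)$ has $t$ tiles and hence boundedly many $k$-cells) then yields linearity when $M$ is negatively curved, where $F(N) \asymp N$ by the linear isoperimetric inequality.
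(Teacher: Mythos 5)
Your high-level architecture matches the paper's: reduce to Theorem \ref{intro-graphs} applied to the adjacency graph $T$ of the Voronoi tessellation, build a local pattern $S_k$ whose occurrences each contribute to $H_k$, and invoke quasi-isometry invariance of the isoperimetric profile. However, the central step --- the construction of $S_k$ --- is a genuine gap. You describe the defect $D_k$ as ``a purely local combinatorial/topological construction identical to the one carried out in \cite{MRS},'' but the MRS construction lives in the cubical lattice of $\mathbb{R}^n$, where one can explicitly remove an axis-aligned slab of cubes forming a neighborhood of an $(n-k-1)$-sphere. For an arbitrary Voronoi tessellation of a nonpositively curved $\widetilde M$ there is no such combinatorial structure, and producing a union of tiles whose removal creates $k$-dimensional topology is exactly where the paper does its real work: it uses the exponential map at a tile center to define orthogonal totally geodesic pieces $P^k=\exp_p(H^k)$ and $P^{n-k}=\exp_p(H^{n-k})$, invokes the law of cosines (Rauch comparison, which is where nonpositive curvature is actually used) to show the tiles meeting $P^{n-k}$ eventually separate from those meeting $P^k$ (Corollary \ref{NPC divergence}), and then proves nesting lemmas between graph balls and metric balls (Lemma \ref{layers of balls}) to guarantee that the resulting subgraph $\mathcal S$ is connected and contains the inner boundary sphere of a graph ball (Lemma \ref{subgraph satisfies conditions}). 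Your proposal asserts these hypotheses of Theorem \ref{intro-graphs} ``by construction'' without any of this; also note that the complement of a defect consisting of tiles ``near the center'' retracts onto $S^{n-1}$, not onto anything with $k$-dimensional homology for $k\le n-2$ --- the defect must be a tile neighborhood of an $(n-k-1)$-sphere linking the desired $k$-cycle.

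Your second step also diverges from the paper and is left unverified where it matters. The paper avoids ambient Alexander duality entirely: it decomposes $A(t)=\operatorname{int}M\cup\operatorname{int}N$ with $M\cap N$ a disjoint union of thickened $(n-1)$-spheres, so $H_k(M\cap N)=0$ for $1\le k\le n-2$ and Mayer--Vietoris gives an injection $\bigoplus_\alpha H_k(N^-_\alpha)\hookrightarrow H_k(A(t))$; a second Mayer--Vietoris computation shows each $H_k(N^-_\alpha)$ has rank at least one. Your Alexander-duality/linking route is plausible in spirit (and closer to \cite{MRS}), but the step you yourself identify as the ``main obstacle'' --- choosing the transverse $(n-k-1)$-disks and dual cocycles with uniform geometric control so that fillings leaving the ball are ruled out --- is precisely the content that needs proof, and you do not supply it. As written, the proposal defers both the construction and the nontriviality argument to unproven claims, so it does not constitute a proof.
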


In this case, unlike in Theorem \ref{hyperbolic intro}, we don't give a matching upper bound.

Moreover, if the fundamental domains we choose (or their intersections) themselves have topology, then the lower bound in Theorem \ref{NPC intro} may in fact be trivial, in the sense that it holds for every subset of $T$ consisting of $t$ fundamental domains.  One might even worry that this is true for all possible choices of fundamental domain, and therefore our results are completely uninteresting.  To forestall this objection, we show that if $M$ has constant curvature, one can choose a tessellation (by Voronoi cells) for which any nonempty intersection of cells is contractible.  It would be interesting to construct tessellations with this property in a more general setting, perhaps even for any aspherical manifold:

\begin{conj}
  If $M$ is an aspherical manifold, there is a compact set $K$ in its universal cover $\widetilde M$ such that translates of $K$ under the deck action of $\pi_1(M)$ cover $\widetilde M$ and any nonempty intersection of these translates is contractible.
 \end{conj}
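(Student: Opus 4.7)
The natural first step is to handle the case when $\widetilde{M}$ admits a $\pi_1(M)$-equivariant $\mathrm{CAT}(0)$ metric, which already includes the setting of Theorem~\ref{NPC intro} (non-positive sectional curvature, by the Cartan--Hadamard theorem). In that case I would pick a point $p \in \widetilde{M}$ whose $\pi_1(M)$-orbit is generic (no equidistant locus among orbit points has codimension less than $1$) and take $K$ to be the closed Dirichlet--Voronoi cell
\[K = \{x \in \widetilde{M} : d(x,p) \leq d(x,gp) \text{ for all } g \in \pi_1(M) \setminus \{1\}\}.\]
In a $\mathrm{CAT}(0)$ space each bisector $\{x : d(x,p) = d(x,gp)\}$ is convex, so $K$ is an intersection of closed convex half-spaces and hence convex; any intersection $g_1 K \cap \cdots \cap g_k K$ is likewise convex, and non-empty closed convex subsets of a $\mathrm{CAT}(0)$ space are contractible via geodesic retraction to an interior point. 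Compactness of $K$ follows from cocompactness of the action. This settles the conjecture under a non-positive curvature hypothesis and extends the Voronoi construction the paper mentions for the constant-curvature case.

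For a general aspherical $M$, no such metric need exist and a combinatorial construction is required. My plan would be to start with a smooth $\pi_1(M)$-equivariant triangulation of $\widetilde{M}$ pulled back from a triangulation of $M$, and take $K$ to be the closed star of a vertex, or more generally a union of closed stars of a chosen set of representatives for the $\pi_1(M)$-orbits of vertices. For a \emph{single} vertex orbit this works cleanly: the intersection of stars of $v_1,\dots,v_k$ is the closed star of the simplex they span (when such a simplex exists, otherwise empty), and each closed star is contractible, so translates of such a $K$ have contractible intersections. One would then try either to refine the triangulation so that all vertices lie in a single $\pi_1(M)$-orbit, or else to glue stars across orbits by contractible corridors in a way that survives the deck action.

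The main obstacle is precisely this last packaging step. If $K$ is a union of several vertex-star pieces, then $gK \cap hK$ is only the union of the pairwise intersections of pieces, and unions of contractible sets are generally not contractible. Controlling the combinatorics of how such pieces glue under $\pi_1(M)$ appears to require genuinely new input, perhaps from Sageev's construction of $\mathrm{CAT}(0)$ cube complexes from codimension-one subgroups, from duality theory for Poincar\'e duality groups, or from an equivariant handle decomposition of $\widetilde{M}$. Without such input I would expect the conjecture to remain open in the general aspherical setting, and, since aspherical manifolds can have very pathological fundamental groups, a fully general proof may share difficulties with other open structural problems in geometric topology such as the Borel conjecture.
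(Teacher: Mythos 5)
This statement is stated as a \emph{conjecture} in the paper: the authors do not prove it, and they only verify the relevant good-cover property in the special case of constant curvature (Voronoi cells in $\bb H^n$ or $\bb R^n$, which are genuinely convex). Your proposal does not close the conjecture either, and its one concrete claim of progress beyond the paper contains a real error. You assert that in a $\mathrm{CAT}(0)$ space each bisector $\{x : d(x,p)=d(x,gp)\}$ is convex, so that the Dirichlet--Voronoi cell is an intersection of convex half-spaces. This is false in variable non-positive curvature: bisectors in a $\mathrm{CAT}(0)$ space, and even in a non-positively curved Riemannian manifold, need not be convex or totally geodesic. The paper points to exactly this obstruction by citing Beem's theorem, that a Riemannian manifold in which every bisector is totally geodesic must have \emph{constant} curvature. (What is true in $\mathrm{CAT}(0)$ spaces is that metric balls and the distance function are convex; half-spaces cut out by bisectors are not.) This is precisely why the authors' Voronoi argument stops at constant curvature and why even the non-positively curved case of the conjecture is left open; your first paragraph would, if correct, already strengthen the paper's result, which should be a warning sign.

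The second half of your proposal (closed stars in an equivariant triangulation) correctly identifies the contractibility of intersections of closed stars of vertices in a single orbit, but as you note yourself, a free simplicial action forces the vertices of every simplex to lie in distinct orbits, so the single-orbit reduction is unavailable and the multi-piece gluing problem is exactly where the difficulty lives. In short: your write-up is a reasonable survey of why the problem is hard, but the only step presented as a completed argument (the $\mathrm{CAT}(0)$ case) rests on a false convexity claim, and the remainder is explicitly speculative. The conjecture remains open, including in the non-positively curved setting.
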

If this condition is satisfied, then the union of a set of translates of $K$ is homotopy equivalent to the nerve of the covering by tiles; in other words, the topology is determined by the lattice of intersections and not any local information about $K$.

\section{Preliminaries}
In this section we collect some background knowledge required to read the rest of the paper. 
\subsection*{Graph theory}
One of the main settings in which we will study the Eden model is graphs. More specifically, we will consider the Eden model on infinite, connected, locally finite, vertex-transitive graphs. Here, \deffont{locally finite} means that each vertex in the graph has finite degree, and \deffont{vertex-transitive} means that for any vertices $v, w$ in our graph, there is a graph automorphism sending $v$ to $w$. Informally, vertex-transitive graphs ``look the same'' at each vertex. One large class of infinite, connected, locally finite, vertex-transitive graphs is given by the Cayley graph of any infinite, finitely-generated group with respect to a finite generating set.

The Eden model on an infinite, connected, locally finite, vertex-transitive graph is defined as follows: at time $t = 0$, an arbitrary vertex of the graph is infected (it does not matter which one, since the graph is vertex-transitive). Then, at each integer time, a vertex adjacent to at least one of the previously infected vertices is randomly chosen (with equal probability) and infected. For a given graph, we will use $A(t)$ to denote the Eden model on the graph at time $t$.

Note that every infinite, connected, locally finite, vertex-transitive graph has a well-defined degree $d$, such that each vertex in the graph has degree $d$. This follows from the fact that the graph is locally finite and vertex-transitive, hence each vertex has the same finite degree. This gives us an analogue of the dimension of Euclidean space for graphs. 

Furthermore, we can consider each infinite, connected, locally finite, vertex-transitive graph as a metric space, by endowing it with the \deffont{graph metric}: the distance between any two vertices is the length of the shortest edge path between them. Since our graph is always connected, such a path always exists. The closed balls in the graph metric will be important to our results.

\subsection*{Geometric group theory}
The language and techniques of geometric group theory occur frequently in this paper, so we briefly collect the most important ones here. The proofs of the results in this section can be found in standard books on geometric group theory, such as \cite{officehours}, \cite{DruKap}, or \cite{claraloh}.

\begin{defn} \label{CayleyGraph} Given a group $G$ with a finite generating set $S$, the \deffont{Cayley graph} $\operatorname{Cay}(G, S)$ has vertex set $G$ and an edge between $g$ and $h$ iff $gh^{-1} \in S \cup S^{-1}$. 
\end{defn}

We quickly prove a recognition theorem for Cayley graphs:
\begin{lem}\label{recognizeCayley}
Let $T$ be a connected, locally-finite graph with $|T| > 1$, and suppose $G$ is a group acting on $T$ by graph isomorphisms (for each $g$, $x \mapsto gx$ is a graph isomorphism of $T$). If the action of $G$ on $T$ is free and transitive, then $T$ is a Cayley graph for $G$.
\end{lem}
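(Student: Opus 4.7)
The plan is to carry out the standard Cayley-graph recognition procedure: fix a basepoint to identify $V(T)$ with $G$, extract a candidate generating set from the neighbors of the basepoint, and then verify that the resulting Cayley graph coincides with $T$. First I would pick a vertex $v_0 \in V(T)$ and study the orbit map $\phi: G \to V(T)$, $g \mapsto g^{-1}v_0$. (Taking the inverse is a convention choice: it makes the edges line up with the right-handed convention $gh^{-1} \in S$ of Definition \ref{CayleyGraph}; using $\phi(g) = gv_0$ would just realize $T$ as the isomorphic left-handed Cayley graph.) Transitivity makes $\phi$ surjective and freeness makes it injective, so $\phi$ is a bijection.

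Next I would set $S = \{s \in G : v_0 \text{ is adjacent to } sv_0 \text{ in } T\}$. Local finiteness makes $S$ finite; applying the automorphism $s^{-1}$ to the edge $v_0 \sim sv_0$ shows $s \in S \iff s^{-1} \in S$, so $S = S^{-1}$; and the hypothesis $|T| > 1$ combined with connectedness forces $S$ to be nonempty. To show that $S$ generates $G$, I would fix any $g \in G$ and use connectedness of $T$ to produce a path $v_0 = h_0 v_0, h_1 v_0, \ldots, h_n v_0 = gv_0$ with $h_0 = e$ and $h_n = g$ (the $h_i$ are uniquely determined by the bijection $g \mapsto gv_0$). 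Applying the automorphism $h_{i-1}^{-1}$ to the edge $h_{i-1}v_0 \sim h_i v_0$ shows $s_i := h_{i-1}^{-1}h_i \in S$, and telescoping yields $g = s_1 s_2 \cdots s_n \in \langle S \rangle$.

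Finally I would verify that the edge relations match. Under $\phi$, the condition $\phi(g) \sim \phi(h)$ in $T$ reads $g^{-1}v_0 \sim h^{-1}v_0$; applying the automorphism $g$ gives the equivalent condition $v_0 \sim gh^{-1}v_0$, i.e.\ $gh^{-1} \in S$, which is exactly the defining condition for an edge in $\operatorname{Cay}(G, S)$. Hence $\phi$ is a graph isomorphism and $T \cong \operatorname{Cay}(G, S)$, realizing $T$ as a Cayley graph of $G$.

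There is no deep obstacle here; the content is entirely elementary. The only subtlety worth flagging is bookkeeping for left- versus right-handed conventions---the choice between $\phi(g) = gv_0$ and $\phi(g) = g^{-1}v_0$ so that the $gh^{-1}$ convention of Definition \ref{CayleyGraph} is matched. Everything else is a direct application of the hypotheses: transitivity and freeness for the bijection, local finiteness for finiteness of $S$, the graph-automorphism property for the symmetry $S = S^{-1}$, and connectedness of $T$ for both generation of $G$ and matching of edges.
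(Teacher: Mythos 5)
Your proof is correct and follows essentially the same route as the paper's: fix a basepoint, use freeness and transitivity to identify $G$ with $V(T)$, take $S$ to be the group elements moving the basepoint to its neighbors, and verify generation by telescoping along a path and matching of edge relations via the automorphism action. The only difference is your use of $\phi(g)=g^{-1}v_0$ to align exactly with the $gh^{-1}$ convention of Definition \ref{CayleyGraph}; the paper uses $g\mapsto gx$ and implicitly works with the $g^{-1}h$ convention, which is harmless since $S=S^{-1}$ makes the two Cayley graphs isomorphic, but your bookkeeping is the more careful of the two.
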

\begin{proof}
Pick a vertex $x \in T$. Since $G$ acts transitively and freely on $T$, each vertex of $T$ can be written uniquely as $gx$ for some $g \in G$. 

Let 
\[S = \{g \in G \colon x, gx\text{ are adjacent}\}.\]
Since $T$ is connected and locally-finite, $S$ is nonempty and finite. We claim that $S$ generates $G$. 

First, note that since $x \mapsto g^{-1}x$ is a graph isomorphism, $gx$ and $hx$ are adjacent if and only if $x$ and $g^{-1}hx$ are adjacent.  In other words, $gx, hx \in T$ are adjacent if and only if $g^{-1}h$ is in $S$. (Note that this also shows $S = S^{-1}$.)

Let $h \in G$ be an arbitrary nonidentity element. Since $T$ is connected, there exists a path in $T$ from $x$ to $hx$. In other words, there exist vertices $g_{1}x, g_{2}x, \ldots, g_{n}x \in T$ such that $x$ and $g_{1}x$ are adjacent, $g_{i}x, g_{i+1}$ are adjacent for all $i \in \{1, \ldots, n-1\}$, and $g_{n}x$ is adjacent to $hx.$ By the previous paragraph, we have 
$$g_{1}, g_{i}^{-1}g_{i+1}, g_{n}^{-1}h \in S$$ for all $i \in \{1, \ldots, n-1\}$. We have 
$$h = g_{1}(g_{1}^{-1}g_{2})(g_{2}^{-1}g_{3})\cdots(g_{n}^{-1}h),$$ which proves any element in $G$ can be written as the product of elements in $S$, hence $S$ generates $G$.

So, we can define the Cayley graph $\text{Cay}(G, S)$. Let $f \colon \text{Cay}(G, S) \to T$ be the map sending $g \in \text{Cay}(G, S)$ to $gx$. This is a bijection, since each vertex of $T$ can be written uniquely as $gx$ for some $g \in G$. 

Furthermore, since $g, h \in \text{Cay}(G, S)$ are adjacent iff $g^{-1}h \in S \cup S^{-1} = S$, and $gx, hx \in T$ are adjacent iff $g^{-1}h \in S$, we conclude that $g, h$ are adjacent iff $f(g) = gx$ and $f(h) = hx$ are adajcent, hence $f$ is a graph isomorphism.
\end{proof}

Equipping $\text{Cay}(G, S)$ with the graph metric, we obtain a metric space associated with $G$.  While the Cayley graphs of a group $G$ with respect to different generating sets $S, S'$ are usually not isometric, they are metrically equivalent in a weaker sense:
\begin{defn}\label{Quasi-Isometry}
Let $X$, $Y$ be metric spaces and let $C \geq 1$ and $K \geq 0$ be constants. A function $f \colon X \to Y$ is a \deffont{$(C, K)$-quasi-isometry} if
\begin{itemize}
    \item (Quasi-isometric embedding) For any $x, x' \in X$, 
    \[\frac{1}{C}d_{Y}(f(x), f(x')) - K \leq d_{X}(x, x') \leq Cd_{Y}(f(x), f(x'))+K.\]
    \item (Coarse surjectivity) For any $y \in Y$, there exists some $x \in X$ such that $d_{Y}(f(x), y) \leq K$.
\end{itemize}
If for some $C$ and $K$ there is a $(C,K)$-quasi-isometry $f \colon X \to Y$, then $X$ and $Y$ are \deffont{quasi-isometric}. This is an equivalence relation.
\end{defn}

\begin{prop}\label{different generating sets quasi-isometric}
Given a group $G$ and two finite generating sets $S, S'$, the map $f \colon \operatorname{Cay}(G, S) \to \operatorname{Cay}(G, S')$ sending $g$ to $g$ is a quasi-isometry.
\end{prop}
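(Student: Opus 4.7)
The plan is to verify both conditions of Definition \ref{Quasi-Isometry} directly for the identity map $f(g) = g$. Since $f$ is the identity on the common vertex set $G$, coarse surjectivity is automatic with $K=0$, so the entire task reduces to producing a constant $C \geq 1$ such that
\[\tfrac{1}{C}\, d_{S'}(g,h) \;\leq\; d_S(g,h) \;\leq\; C\, d_{S'}(g,h)\]
for all $g,h \in G$, where $d_S$ and $d_{S'}$ denote the respective graph metrics.

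The key step is a direct translation between the two generating sets. Because $S$ and $S'$ are both finite and each generates $G$, the quantities $C_1 = \max_{s \in S} |s|_{S'}$ and $C_2 = \max_{s' \in S'} |s'|_{S}$ (where $|\cdot|_S$ denotes word length over $S \cup S^{-1}$, and similarly for $S'$) are finite. I would set $C = \max(C_1, C_2)$ and, given any edge path of length $n$ in $\operatorname{Cay}(G, S')$ from $g$ to $h$, replace each edge---which corresponds to multiplication by some element of $S' \cup (S')^{-1}$---by its shortest expression as a word in $S \cup S^{-1}$. Concatenating yields an edge path in $\operatorname{Cay}(G, S)$ from $g$ to $h$ of length at most $C_2 n \leq Cn$, proving $d_S(g,h) \leq C\, d_{S'}(g,h)$. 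The symmetric argument using $C_1$ gives the reverse inequality, establishing that $f$ is a $(C,0)$-quasi-isometry.

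No step here is genuinely difficult; the main subtlety is simply bookkeeping which of $C_1$ and $C_2$ governs which direction of the bilipschitz comparison, and there is no obstacle analogous to the hard parts of Theorem \ref{intro-graphs}. The finiteness hypothesis on both generating sets is essential: without it, the maxima defining $C_1$ and $C_2$ could be infinite, and the statement would fail in general.
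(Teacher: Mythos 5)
Your proof is correct and is exactly the standard bilipschitz argument that the paper invokes by citation (it states Proposition \ref{different generating sets quasi-isometric} without proof, referring to standard texts). The only cosmetic point is that one should note $|{s'}^{-1}|_S = |s'|_S$ so that edges labeled by inverses of generators are also covered by $C_2$, and take $C \geq 1$; neither affects the argument.
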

As a consequence of this proposition, a finitely generated group $G$ is well-defined as a metric space up to quasi-isometry, and so we can talk about groups being quasi-isometric to one another or to other spaces.

\begin{defn}\label{proper metric space}
A metric space $X$ is \deffont{proper} if closed balls in $X$ are compact.
\end{defn}

\begin{defn}
A metric space $X$ is a \deffont{geodesic space} if for any $x, y \in X$, there exist $a, b \in \bb R$ with $a \leq b$ and an isometric embedding $\gamma \colon [a, b] \to X$ such that $\gamma(a) = x$ and $\gamma(b) = y$.  Such an isometric embedding is called a \deffont{geodesic}.
\end{defn}
Note that a geodesic in this metric sense is a shortest path between any of its points.  This does not always coincide with the variational definition of a geodesic in Riemannian geometry.  However, in a complete Riemannian manifold, a metric geodesic is always a Riemannian geodesic; conversely, there is always a shortest Riemannian geodesic between two points and this shortest path is always a metric geodesic.  In particular, every complete Riemannian manifold is a geodesic space.

\begin{defn}\label{geometric group action}
Let $G$ be a group and $X$ a metric space. A group action $G \curvearrowright X$ is \deffont{geometric} if it satisfies the following properties:
\begin{itemize}
\item It \deffont{acts by isometries}: for each $g \in G$, the map $X \to X$ sending $x$ to $g \cdot x$ is an isometry.
\item It is \deffont{cocompact}: the quotient $X/G$ is a compact space.
\item It is \deffont{properly discontinuous}: for any compact $K \subseteq X$, there are only finitely many $g \in G$ such that $K \cap g \cdot K \neq \emptyset$.
\end{itemize}
\end{defn}

A central result in geometric group theory is the Milnor--Schwarz Lemma, which states the following:
\begin{thm}[Milnor--Schwarz]\label{milnorschwarz}
If $G$ acts geometrically on a proper geodesic space $X$, then $G$ is finitely generated, and for any finite generating set $S$ of $G$ and any $x \in X$ the map $f_{x}$ sending $g \in G$ to $g \cdot x \in X$ is a quasi-isometry from $\text{Cay}(G, S)$ to $X$.
\end{thm}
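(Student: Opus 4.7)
The plan is the classical two-step argument of Milnor and \v{S}varc, which I sketch here. Fix a basepoint $x \in X$. By cocompactness, there is a compact set whose $G$-translates cover $X$, and using that $X$ is proper I may take this set to be a closed ball $\cl{B}(x, D)$. Define
\[S := \{g \in G \setminus \{e\} : d_X(x, g \cdot x) \leq 3D\}.\]
Then $S$ is symmetric (since $G$ acts by isometries, $d_X(x, g^{-1}x) = d_X(gx, x)$) and finite: if $g \in S$ then $x$ lies in $\cl{B}(x, 3D) \cap g \cdot \cl{B}(x, 3D)$, and since $\cl{B}(x, 3D)$ is compact, proper discontinuity allows this to happen for only finitely many $g$.

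To show $S$ generates $G$ and obtain the first quasi-isometry estimate, given $h \in G$ I would use geodesicity to pick a geodesic from $x$ to $h \cdot x$ and sample it at intervals of length $D$, producing points $y_0 = x, y_1, \ldots, y_n = h \cdot x$ with $n \leq \lceil d_X(x,h\cdot x)/D \rceil$. Since $G \cdot \cl{B}(x, D) = X$, each $y_i$ is within $D$ of some $g_i \cdot x$, and I may take $g_0 = e$, $g_n = h$. The triangle inequality gives $d_X(g_i \cdot x, g_{i+1} \cdot x) \leq 3D$, so each $g_i^{-1} g_{i+1}$ lies in $S \cup \{e\}$. The telescoping product $h = (g_0^{-1}g_1)(g_1^{-1}g_2)\cdots(g_{n-1}^{-1}g_n)$ then expresses $h$ as a word in $S$ of length at most $n$, simultaneously showing $G = \langle S\rangle$ and yielding the Lipschitz inequality $|h|_S \leq d_X(x,h\cdot x)/D + 1$.

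For the reverse inequality, I would write an arbitrary $h$ as a minimal word $s_1 \cdots s_n$ in $S$ and telescope, using the isometric action:
\[d_X(x, h\cdot x) \leq \sum_{i=1}^{n} d_X(s_1\cdots s_{i-1}\cdot x, s_1\cdots s_i\cdot x) = \sum_{i=1}^{n} d_X(x, s_i\cdot x) \leq 3D\, |h|_S.\]
Combining both bounds with the identity $d_X(gx, hx) = d_X(x, g^{-1}h \cdot x)$ shows $f_x$ is a quasi-isometric embedding. Coarse surjectivity is immediate from $G \cdot \cl{B}(x, D) = X$. Finally, Proposition \ref{different generating sets quasi-isometric} upgrades the conclusion from the specific $S$ constructed here to any finite generating set of $G$.

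The argument is not technically deep; the thing to watch is simply where each ingredient of a geometric action enters. Cocompactness supplies the covering radius $D$, proper discontinuity bounds $|S|$, the isometric action is essential both for the telescoping estimate and for the left-invariance identity $d_X(gx, hx) = d_X(x, g^{-1}h \cdot x)$, and geodesicity is precisely what allows the geodesic from $x$ to $h \cdot x$ to be sampled at regular intervals. Properness is needed only so that the compact set appearing in the proper-discontinuity argument really is compact; the main modest obstacle is bookkeeping the three constants $D$, $3D$, and $1/D$ so that they interact cleanly with the definition of quasi-isometry.
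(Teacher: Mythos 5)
Your proof is correct and is the classical Milnor--\v{S}varc argument; the paper itself does not prove this theorem but cites it to standard references (\cite{officehours}, \cite{DruKap}, \cite{claraloh}), and your write-up matches the proof given there: extract a covering ball $\cl{B}(x,D)$ from cocompactness and properness, bound $S$ via proper discontinuity applied to $\cl{B}(x,3D)$, sample a geodesic to get generation together with the upper Lipschitz bound, telescope for the lower bound, and invoke Proposition \ref{different generating sets quasi-isometric} to pass to an arbitrary finite generating set. The only point worth tightening is the passage from ``$X/G$ compact'' (the paper's definition of cocompact) to ``some compact set has translates covering $X$,'' which uses local compactness of $X$ --- but that is supplied by properness, which you correctly identify as the hypothesis doing that work.
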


One important source of geometric group actions come from fundamental groups of manifolds. Let $M$ be a compact, connected Riemannian manifold, and equip the universal cover $\widetilde{M}$ with the unique Riemannian metric such that the covering map $p \colon \widetilde{M} \to M$ is a local isometry. Then, the action of $\pi_{1}(M)$ on $\widetilde{M}$ by deck transformations is geometric \cite[Corollary 5.4.10]{claraloh}.

Another source of geometric group actions come from lattices of isometries of hyperbolic $n$-space, which will be discussed in more detail in the next subsection.

\subsection*{Hyperbolic geometry}
Hyperbolic $n$-space $\bb{H}^{n}$ is the unique (up to isometry) simply-connected $n$-dimensional Riemannian manifold of constant sectional curvature $-1$.  There are multiple models of $n$-dimensional hyperbolic space, but the one we use in this paper is the \deffont{Poincar\'{e} disc model}: we view $\bb{H}^{n}$ as the open unit ball in $\bb{R}^{n}$ with Riemannian metric
\[\dif s^2=\frac{4(\dif x_1^2+\cdots+\dif x_n^2)}{(1-(x_1^2+\cdots+x_n^2)^2)^2}.\]
Much of the material in this subsection can be found in \cite[Chapter 4]{DruKap}.

Hyperbolic spaces are particularly symmetric: for every $x,y \in \bb H^n$ and every orthogonal transformation $L:T_x\bb H^n \to T_y\bb H^n$, there is an isometry $f:\bb H^n \to \bb H^n$ such that $f(x)=y$ and $Df_x=L$.  Since isometries take geodesics to geodesics, each such isometry is unique.  Therefore, there is a bundle structure
\[O(n) \to \Isom(\bb H^n) \xrightarrow{p} \bb H^n,\]
where the map $p$ sends an isometry $f$ to $f(x)$, for an arbitrary basepoint $x$.

In particular, the group of isometries fixing $x$ is isomorphic to $O(n)$.  Moreover, these subgroups are conjugate for every choice of $x$.  Therefore, to understand them, it is enough to consider the isometries fixing the origin in the Poincar\'e disk model.  These are exactly the orthogonal transformations of $\bb R^n$, since the Poincar\'e disk metric is rotationally invariant.  The fixed set of each such transformation is a linear subspace of $\bb R^n$; its intersection with the Poincar\'e disk is isometric to a lower-dimensional hyperbolic space inside $\bb H^n$.  We state this as a lemma:
\begin{lem} \label{hyp fixed set}
    For every nontrivial isometry of hyperbolic space, the fixed set, if nonempty, is a hyperbolic subspace of lower dimension.
\end{lem}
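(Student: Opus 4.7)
The plan is to exploit the homogeneity of $\bb H^n$ together with the explicit description of isometries fixing the origin in the Poincar\'e disk model. Let $f$ be a nontrivial isometry with a nonempty fixed set, and pick a point $x \in \bb H^n$ fixed by $f$. Using the transitivity of $\Isom(\bb H^n)$ noted in the paragraph preceding the lemma, I would choose an isometry $g$ sending $x$ to the origin $0$ of the Poincar\'e disk. Then $gfg^{-1}$ is a nontrivial isometry fixing $0$, and its fixed set is the image under $g$ of the fixed set of $f$; hence it suffices to prove the lemma under the assumption that the fixed point is $0$.

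Next I would invoke the paragraph's observation that the isometries of $\bb H^n$ fixing $0$ in the Poincar\'e disk model are precisely the restrictions of orthogonal transformations of $\bb R^n$ (since the Poincar\'e metric is rotationally invariant). So $gfg^{-1}$ is the restriction to the open unit disk $D^n$ of some $L \in O(n)$ with $L \neq \mathrm{id}$. The fixed set of $L$ is a proper linear subspace $V \subsetneq \bb R^n$, and the fixed set of $gfg^{-1}$ is $V \cap D^n$.

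It remains to identify $V \cap D^n$, with its induced Riemannian metric, as a lower-dimensional hyperbolic space. If $k = \dim V < n$, pick an orthonormal identification $V \cong \bb R^k$ and parametrize $V \cap D^n$ as the open unit $k$-disk in coordinates $(x_1,\dots,x_k)$. Restricting the metric
\[\dif s^2 = \frac{4(\dif x_1^2 + \cdots + \dif x_n^2)}{(1-(x_1^2+\cdots+x_n^2))^2}\]
to $V$ (i.e.\ setting $x_{k+1}=\cdots=x_n=0$) gives exactly the Poincar\'e disk metric on $D^k$, so $V \cap D^n$ is isometric to $\bb H^k$. Pulling back by $g^{-1}$, the fixed set of $f$ is a hyperbolic subspace of dimension $k < n$, as claimed.

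The only subtle point is the metric identification in the last step, but it is essentially tautological once one writes down the Poincar\'e metric in coordinates adapted to $V$; the rotational symmetry of the model guarantees that we may choose these coordinates so that $V$ is a coordinate subspace. So there is no real obstacle; the lemma is almost a direct unpacking of the structural facts already stated in the preceding paragraph.
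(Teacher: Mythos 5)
Your proposal is correct and follows essentially the same route as the paper, which derives the lemma directly from the preceding discussion: conjugate a fixed point to the origin of the Poincar\'e disk, identify the stabilizer of the origin with $O(n)$, and observe that the fixed set of a nontrivial orthogonal map is a proper linear subspace whose intersection with the disk carries the lower-dimensional Poincar\'e metric. You simply spell out the details (notably the metric restriction computation) that the paper leaves implicit.
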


As a Lie group, the isometry group is isomorphic to $O^+(n,1)$, an index 2 subgroup of the group of linear isometries of the quadratic form
\[x_{1}^{2} + \cdots + x_{n}^{2} - x_{n+1}^{2}.\]

\begin{defn} \label{def:lattice}
A \deffont{lattice} $\Gamma$ in $\operatorname{Isom}(\bb{H}^{n})$ is a discrete subgroup $\Gamma < \operatorname{Isom}(\bb{H}^{n})$, where $\text{Isom}(\bb{H}^{n})$ is given the compact-open topology. We say $\Gamma$ is a \deffont{cocompact} or \deffont{uniform lattice} if the quotient $\operatorname{Isom}(\bb{H}^{n})/\Gamma$ is compact.
\end{defn}
If $\Gamma$ is a cocompact lattice of isometries of $\bb{H}^{n}$, then the action of $\Gamma$ on $\bb{H}^{n}$ is geometric.  This is a special case of a more general fact: if $M$ is a homogeneous Riemannian manifold, and $\Gamma$ is a cocompact lattice in $\operatorname{Isom}(M)$, then $\Gamma$ acts geometrically on $M$ \cite[p. 142]{DruKap}.

Cocompact lattices in $\bb H^n$ are abundant, with infinitely many distinct types in every dimension $n \geq 2$, as demonstrated by work of Borel, Mostow, and Gromov and Piatetski-Shapiro among others; see \cite[Chapter 12]{DruKap} for an overview.

In this paper we use such lattices to obtain tilings of $\bb{H}^{n}$ by convex sets called Voronoi cells, on which we can then define a version of the Eden model.  The full construction is given in Section \ref{The Eden model on hyperbolic tessellations}.



\subsection*{Homology and cohomology}
We use standard tools from homology theory which can be found in any algebraic topology textbook such as \cite{hatcher}.  Our results hold for homology both with integer coefficients and with coefficients in any field.

\subsection*{First-passage percolation and the Eden model}
First-passage percolation (FPP) is a well-studied family of stochastic growth models; see \cite{ADH} for an extensive survey of FPP on $\mathbb Z^n$, and \cite{BT1,BT2} for some results on other Cayley graphs.  In our proof in the following sections, we will make use of a kind of equivalence, first noticed by Richardson \cite{richard}, between the Eden model and a specific FPP model: site FPP with exponentially distributed passage times. While our results can be interpreted equally as applying to the Eden model and to this FPP model, the proof requires working with the FPP model.

We first define site FPP on a graph $T=(V,E)$. Every vertex of $T$ is called a \deffont{site}. Every site $p$ is assigned an independent and identically distributed (i.i.d.) number $\rho_p$ called the \deffont{passage time}. The passage time determines the following: if the first site adjacent to $p$ gets infected at time $t$, then at time $t+\rho_p$, the site $p$ is infected.  A graph with passage times assigned to each vertex, together with an initial vertex which is infected at time $0$, determines the infected region at every time $t>0$.  We usually call this region the \deffont{FPP ball} at time $t$ to emphasize its similarity to a ball around the initial vertex with respect to a random metric.  A probability distribution on $\rho_p$ determines an \deffont{FPP model}, which is a distribution on the set of functions $V \to [0,\infty)$ determining the passage times of each site.

We choose our i.i.d.\ passage times from an exponential distribution with mean $1$.  The exponential distribution is \deffont{memoryless}, that is,
\[\bb{P}(X>t+s|X>s)=\bb{P}(X>t).\]
In particular, if at time $t$ a site $p$ is located at distance $1$ from the infected region, the additional time required to infect $p$ is again distributed exponentially with mean $1$, and does not depend on the passage times of any other site.

Moreover, the probability of two sites adjacent to $A$ having the same passage time is zero (that is, two sites are never filled in simultaneously).  Therefore, for a creature that can only tell the order in which events happen, but not the duration between them, the site FPP model with exponentially distributed passage times will be statistically indistinguishable from the Eden growth model. In this sense, an Eden model is just an FPP model with time rescaled. This correspondence was first observed by Richardson \cite{richard}. Therefore, in the remainder of this paper, we will consider the FPP model with exponentially distributed ($\mu=1$) passage times, as it is easier to work with than the Eden model itself.

One possible issue is this.  We will prove several results that state that an event happens \deffont{with high probability}, that is, that the probability approaches $1$ as time increases.  We will prove this with respect to FPP time, but would like to state it with respect to Eden time.  In Section \ref{eden vs fpp}, we justify this discrepancy by showing that in any infinite, connected, locally finite, vertex-transitive graph, as Eden time grows, FPP time also grows without bound with high probability.  (In a Euclidean grid, this follows from the shape theorem, but in general, the relationship may be considerably looser.)

\section{The Eden model on graphs}

While the homology of the Eden model on graphs is not very interesting, we can instead generalize one of the main results shown in \cite{MRS}, a consequence of which is the lower bound on homology. In \cite{MRS}, it is proved that for a given “nice” subset of $\mathbb{R}^{n}$ built up from unit hypercubes, with high probability we can find at least $Ct^{\frac{n-1}{n}}$ copies of this subset on the boundary of the Eden model in $\mathbb{R}^{n}$. In this section, we will show an analogous result for graphs. 

First, we introduce some definitions. 

\begin{defn}[Boundary]
Let $A = (V_{A}, E_{A})$ be a subgraph of a graph $T=(V,E)$. Then the \deffont{boundary} $\del A$ of $A$ is
\[\del A = \qty{w \in V \setminus V_{A}: \exists v \in A, (v, w) \in E.}\]
\end{defn}

This definition, although it is convenient for our results, is non-standard: the boundary $\del_{\text{edge}} A$ of $A$ is usually defined to be the set of edges connecting vertices in $A$ to vertices outside $A$.  However, it is not hard to see that
\[|\del A| \leq |\del_{\text{edge}}(A)| \leq \deg T|\del A|.\]
That is, the two notions of perimeter (size of the boundary) differ by at most a multiplicative constant, which means that many ideas are unaffected by substituting one for the other, such as the definition of non-amenability below.

\begin{defn}
Let $T = (V, E)$ be a connected, locally finite graph with the graph metric $d$. Given a vertex $x \in T$ and a subgraph $A$ of $T$, we let 
\[d(A, x) = \min\{d(x, v) \colon v \in A\}.\]
\[B_{R}(A) = \{v \in T \colon d(A, v) \leq R\}\]
\[D_{R}(A) = \{v \in T \colon d(A, v) = R\}.\]
When the subscript is too long, we will sometimes use $B(A, R)$ and $D(A, R)$ to denote the same sets. Note that $\partial A = D_{1}(A)$, and that in general we have $\partial B_{R}(A) = D_{R+1}(A)$.
\end{defn}

The isoperimetric inequality in $\mathbb{R}^{n}$, states that given a fixed amount of volume, the shape with the least perimeter that encloses at least that much volume is a sphere, hence any shape has perimeter greater than or equal to that of a sphere of the same volume.  The term $t^{\frac{n-1}{n}}$ in the lower bound on the homology of the Euclidean Eden model derives from this isoperimetric inequality. Our generalization to graphs involves defining a corresponding notion of isoperimetry for graphs. 

\begin{defn}
    The \deffont{F\o lner isoperimetric profile} $F:\bb{N}\to\bb{N}$ of a graph $T=(V,E)$ is defined by
    \[F(n)=\min\qty{|\del A|: |A|\geq n}\]
    where $A$ is any subgraph of $T$ with size at least $n$.
\end{defn}

This definition is nonstandard in two ways. Firstly, it is not defined in terms of the more standard edge boundary. However, one can verify that if $F_{\text{edge}}$ is the F\o lner isoperimetric profile defined in terms of the edge boundary, then by the fact that 
\[|\del A| \leq |\del_{\text{edge}}(A)| \leq \deg T|\del A|\]
we also have
$$F(n) \leq F_{\text{edge}}(n) \leq \deg T \cdot F(n).$$ The first inequality follows from choosing $A$ such that $|\del_{\text{edge}}(A)| = F_{\text{edge}}(n)$, and the second follows from choosing $A$ such that $|\del A| = F(n)$. Since ultimately we are only concerned with the isoperimetric profile up to a multiplicative constant, this means the two are interchangeable for our purposes.

Secondly, $F$ is defined ranging over all subgraphs of size at least $n$, whereas it is more common to define $F$ ranging over all subgraphs of size exactly $n$. The reason we choose to define it this way is so that $F$ becomes a nondecreasing function, which is necessary for certain arguments to go through.

\begin{prop}\label{sublinearity}
    Let $T$ be an infinite vertex-transitive graph. Then
    \[F(m+n)\leq F(m)+F(n).\]
    In particular, if $k$ and $n$ are positive integers, we have $F(kn)\leq kF(n)$. 
\end{prop}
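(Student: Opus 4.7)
The plan is to combine two disjoint translates realizing the minima for $m$ and $n$ into a single enclosing set for $m+n$. First I would choose finite subgraphs $A,B\subseteq T$ with $|A|\ge m$, $|\partial A|=F(m)$, and $|B|\ge n$, $|\partial B|=F(n)$; such minimizers exist since the boundary sizes are $\mathbb{N}$-valued, adopting the standard convention that the minimum in the definition of $F$ is taken over finite subgraphs (otherwise one could simply take $A=T$ and trivialize everything).

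Next I would use vertex-transitivity to translate $B$ far away from $A$. Pick vertices $v_A\in A$ and $v_B\in B$. Since $T$ is infinite and locally finite, every closed ball in the graph metric is finite, so there exists a vertex $w$ whose distance from $v_A$ exceeds $\diam(A)+\diam(B)$. By vertex-transitivity, there is a graph automorphism $\phi$ of $T$ with $\phi(v_B)=w$, and the triangle inequality then gives $d(v_A,\phi(b))>\diam(A)$ for every $b\in B$. In particular no vertex of $\phi(B)$ lies within $\diam(A)$ of $v_A$, so $A\cap\phi(B)=\emptyset$ and $|A\cup\phi(B)|=|A|+|B|\ge m+n$.

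To bound the boundary, I would observe that any $v\in\partial(A\cup\phi(B))$ lies outside both $A$ and $\phi(B)$ and is adjacent to at least one of them, hence $v\in\partial A\cup\partial\phi(B)$. Therefore
\[|\partial(A\cup\phi(B))|\le|\partial A|+|\partial\phi(B)|=F(m)+F(n),\]
and since $A\cup\phi(B)$ is a candidate for the minimum defining $F(m+n)$, we conclude $F(m+n)\le F(m)+F(n)$. The consequence $F(kn)\le kF(n)$ then follows by an immediate induction on $k$: $F(kn)\le F((k-1)n)+F(n)\le(k-1)F(n)+F(n)=kF(n)$. The argument presents no serious obstacle; the only delicate points are interpreting $F$ as a minimum over finite subgraphs and verifying that $\phi$ genuinely separates $A$ from $\phi(B)$, both of which follow at once from the standing hypotheses on $T$.
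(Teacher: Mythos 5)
Your proof is correct and follows essentially the same route as the paper's: take boundary-minimizing sets for $m$ and $n$, use infiniteness and vertex-transitivity to make them disjoint, and bound the boundary of the union. You are in fact slightly more careful than the paper in two places---allowing the minimizers to have size at least $m$ rather than exactly $m$, and using the containment $\partial(A\cup\phi(B))\subseteq\partial A\cup\partial\phi(B)$ instead of asserting exact additivity of the boundaries---but the underlying argument is the same.
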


\begin{proof}
    From the definition of $F$, there exist subgraphs $A_m,A_n$ of $T$ with $|A_{m}| = m$, $|A_{n}| = n$ which attain the minimum for $F$, i.e. $|\del A_m|=F(m)$ and $|\del A_n|=F(n)$. Furthermore, since $T$ is infinite and vertex-transitive, we can choose $A_{m}, A_{n} \subseteq T$ so that $d(A_{m}, A_{n}) \geq 3$. In particular, $A_{m} \cap A_{n} = \emptyset$ and $|\partial A_{m}| \cap |\partial A_{n}| = \emptyset$.
    
    Now take $A=A_m\cup A_n$. Then $|\del A|=|\del A_m|+|\del A_n|=F(m)+F(n)$, and $|A|=m+n$. Therefore, by the definition of $F$,
    \[F(m+n)\leq |\del A|=F(m)+F(n).\]
    The second statement follows by induction on $k$.
\end{proof}

\begin{exmps} \label{IP examples}
\begin{enumerate}[(a)]
\item The classical isoperimetric inequality implies that the isoperimetric profile of the Cayley graph of $\bb Z^d$ is $Cn^{\frac{d-1}{d}}$ for some constant $C$.
\item In any regular tree, the isoperimetric profile is linear in $n$.
\item In general, a graph whose isoperimetric profile is bounded below by a linear function is called \deffont{non-amenable}.  This is one of many equivalent definitions of non-amenability, due to F\o lner \cite{folner}.  Let $\td M$ be a complete manifold of negative sectional curvature and let $\Gamma$ be a group acting on $\td M$ geometrically.  Then $\td M$ is a hyperbolic metric space \cite[Example 7.2.3]{claraloh}, and $\Gamma$ is a Gromov hyperbolic group.  In particular, this means that $\Gamma$ is non-amenable \cite[Corollary 9.1.11]{claraloh} and therefore the isoperimetric profile of $\Gamma$ is bounded below by a linear function.  This is related to the fact that $\td M$ also has a linear isoperimetric profile (in the Riemannian sense).
\end{enumerate}
\end{exmps}

    

The main result of this section is Theorem \ref{intro-graphs}, which we restate here:

\begin{thm}\label{generalized theorem 8}
    Let $T$ be an infinite, locally finite, connected, vertex-transitive graph where each vertex has degree $d$.  Let $S$ be any connected subgraph of $T$ which satisfies the following:
    \begin{itemize}
        \item $S$ is contained in an $R$-ball $B_{R}(x) = \{v \in T \colon d(v, x) \leq R\}$ whose center $x$ is a vertex of $T$,
        \item $S$ contains $D_{R}(x) \subset B_R(x)$.
    \end{itemize}
    Let $A(t)$ denote the Eden model on $T$ at time $t$. There is a constant $C(R,d)>0$ so that with high probability as $t\to\infty$, there are at least $CF(|A(t)|)$ disjoint $R$-balls in $T$ whose intersection with $A(t)$ is isomorphic to $S$.
\end{thm}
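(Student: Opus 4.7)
The plan is to work in the FPP formulation with $\mathrm{Exp}(1)$ passage times, and to produce the desired $R$-balls in three steps: pack many candidate centers near the boundary of $A(t)$, bound below the probability that a single candidate matches $S$, and pass from expectation to probability by a second-moment argument.

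First I would use the isoperimetric profile to find candidates. Since $|\partial A(t)| \geq F(|A(t)|)$, and since every $(2R{+}1)$-ball in $T$ has the same finite cardinality $N(R,d)$ by vertex-transitivity and local finiteness, a greedy packing in $\partial A(t)$ yields at least $|\partial A(t)|/N(R,d)$ vertices $y_1,\dots,y_k$ at pairwise graph distance greater than $2R$. I would choose each $y_i$ in a controlled neighborhood of $\partial A(t)$ so that the sphere $D_R(y_i)$ is in a position to be filled in ``just before'' time $t$. The balls $B_R(y_i)$ are then pairwise disjoint by construction.

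Next, for each candidate $y$ I want $\Pr[A(t)\cap B_R(y)\cong S]\geq p$ for a universal constant $p=p(R,d)>0$. The crucial structural observation is that, since $S\supseteq D_R(y)$, every vertex of $B_R(y)\setminus S$ lies strictly inside $B_R(y)$ with all its neighbors in $B_R(y)$; thus the forbidden region can only be infected from within. By memorylessness of the exponential, once we condition on the state of the FPP outside $B_R(y)$ and on the random times at which the sphere vertices $v\in D_R(y)$ become infected from outside, the continuation inside $B_R(y)$ is an independent FPP with fresh $\mathrm{Exp}(1)$ clocks. The target event then decomposes into (i) the sphere $D_R(y)$ is fully infected by some time $\tau\leq t$, and (ii) in the window $(\tau,t]$, exactly the interior vertices of $S$ get infected, in some adjacency-compatible order, while no vertex of $B_R(y)\setminus S$ is touched. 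Event (ii) is a finite race among boundedly many independent exponentials, with probability bounded below by a constant depending only on the combinatorics of $S$ inside $B_R(y)$; event (i) is arranged by the choice of $y$ near the boundary. This single-ball estimate is the main obstacle: one must simultaneously force a valid filling order on $S\setminus D_R(y)$ and freeze $B_R(y)\setminus S$ on the same time window, and verify that for every sphere configuration delivered by the (possibly adversarial) outside process at least one valid continuation exists. Since there are only finitely many sphere configurations and only finitely many adjacency-compatible filling orders, taking a minimum over this finite set yields the universal constant $p(R,d)>0$.

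Finally, the indicators $\mathbf{1}\{A(t)\cap B_R(y_i)\cong S\}$ depend on disjoint interior regions, so their correlations are mediated only through the outside process and remain tractable. A Chebyshev-style second-moment argument then yields that, with high probability, the number of successful balls exceeds $\tfrac{p}{2}k\geq CF(|A(t)|)$; this is genuinely a ``with high probability'' statement because $F(|A(t)|)\to\infty$ as $t\to\infty$. The translation from FPP time to Eden time is handled by Section~\ref{eden vs fpp}.
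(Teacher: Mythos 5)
Your skeleton (isoperimetric packing of candidate balls, a per-ball success probability bounded below, then a law of large numbers) matches the paper's, but the step you yourself flag as ``the main obstacle'' is exactly where the argument is incomplete, and the route you sketch does not close it. Conditioning on the outside process and on the times at which the sphere $D_R(y)$ is contacted does not reduce to a minimum over finitely many combinatorial configurations: the contact times are continuous random variables determined by the global history, and if the sphere is first reached only at time $t-\eta$ for tiny $\eta$ there is no time left to fill $S$, while if it is reached much earlier the whole ball is likely already swallowed. You also cannot ``arrange event (i) by the choice of $y$,'' since $y$ must be chosen without knowledge of these future contact times. The paper's resolution is to decouple from the outside entirely: choose the centers in $D_{R+1}(A(t-2))$, so that each ball is adjacent to but disjoint from $A(t-2)$ (Lemma \ref{enough ball lemma}), and define the success event $X_j$ purely in terms of the fresh passage times inside the ball --- at most $\varepsilon=d^{-R-1}$ on $S_j$ and at least $3$ on $B_j\setminus S_j$. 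Since $S_j$ is connected and contains the whole sphere, whichever vertex of $B_j$ is touched first from $A(t-2)$ lies in $S_j$ and floods all of $S_j$ in time $(d^{R+1}-1)\varepsilon<1$, while every vertex of $B_j\setminus S_j$ has all its neighbors inside $B_j$ and stays frozen past time $t+1$; so $X_j$ deterministically forces $A(t)\cap B_j=S_j$. By memorylessness the $X_j$ are genuinely i.i.d., so the law of large numbers applies directly and no second-moment or correlation analysis is needed (your claim that the correlations ``remain tractable'' is asserted, not proved).

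There is a second omission: because the candidate balls must be planted at an earlier time ($t-2$ in the paper), the packing only yields $\gtrsim F(|A(t-2)|)$ of them, and one must still convert this to $\gtrsim F(|A(t)|)$. This requires showing $|A(t)|\leq C_2|A(t-2)|$ with high probability, which the paper proves via a nontrivial coupling of the growth on $[t-2,t]$ with a forest of independent rooted trees (Lemma \ref{generalized lem 9}), together with subadditivity and monotonicity of $F$. Your proposal avoids this by working directly at time $t$, but that choice is precisely what makes your single-ball estimate unavailable.
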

Note that since $T$ is connected, we can assume $d \geq 2$.

To prove Theorem \ref{generalized theorem 8}, we proceed as follows.  We first show that at any time $t$, we can put $\Omega(F(|A(t)|))$ disjoint $R$-balls adjacent to $A(t)$, in the sense that these balls touch $A(t)$ at the boundary, but they do not intersect with $A(t)$. This is proven in Lemma \ref{enough ball lemma}. In particular, we can find $\Omega(F(|A(t-2)|))$ such balls adjacent to $A(t-2)$. Lemma \ref{generalized lem 9} shows that this implies we can find $\Omega(F(|A(t)|))$ many such balls adjacent to $A(t-2)$. Finally, Lemma \ref{enough fraction ball lemma} shows that at time $t$, a positive fraction of these balls will intersect with $A(t)$ in a translate of $S$.

\begin{rmk} \label{rmk:strengthening}
    Further in the paper, we actually use a slight strengthening of Theorem \ref{generalized theorem 8}.  Fix a base vertex $v_0$ of $T$ and suppose that for every $v \in T$ we have fixed a graph automorphism $\phi_v:T \to T$ that takes $v_0$ to $v$.  Then we can choose $CF(|A(t)|)$ disjoint balls such that whenever $v$ is the center of one of them, then $B_R(v) \cap A(t)=\phi_v(S)$.  One can see that this is true by tracing through the proof of the theorem.
\end{rmk}

We first give a rough estimate of the volume of $R$-balls in our graph:

\begin{lem}\label{size of balls}
    Let $T$ be an infinite vertex-transitive graph of degree $d \geq 2$, and let $B_R(x)$ denote the ball of radius $R$ centered at $x\in T$. Then $|B_R(x)|\leq d^{R+1}$.
\end{lem}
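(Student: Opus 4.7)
The plan is a direct counting argument based on the fact that every vertex at distance $k$ from $x$ is adjacent to some vertex at distance $k-1$ from $x$ (pick any vertex preceding it on a shortest path). Combined with the degree bound, this gives a geometric estimate on the growth of spheres $D_k(x)$.

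First I would show by induction on $k$ that $|D_k(x)| \leq d^k$ for all $k \geq 0$. The base case $k=0$ is immediate since $D_0(x) = \{x\}$. For the inductive step, note that each vertex $v \in D_k(x)$ has a neighbor in $D_{k-1}(x)$, so $D_k(x)$ is contained in the union of the neighborhoods of vertices in $D_{k-1}(x)$. Since every vertex has exactly $d$ neighbors, this yields $|D_k(x)| \leq d \cdot |D_{k-1}(x)| \leq d \cdot d^{k-1} = d^k$.

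Summing over $k$ from $0$ to $R$ gives
\[|B_R(x)| = \sum_{k=0}^{R} |D_k(x)| \leq \sum_{k=0}^{R} d^k = \frac{d^{R+1}-1}{d-1} \leq d^{R+1},\]
where the last inequality uses $d \geq 2$ so that $d-1 \geq 1$. There is no real obstacle here; the only thing to be slightly careful about is the base of the induction and the fact that vertex-transitivity plays no role in this lemma beyond ensuring that the degree $d$ is well-defined and uniform across the graph, which was already established in the preliminaries.
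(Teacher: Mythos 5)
Your proof is correct and follows essentially the same route as the paper's: an induction showing $|D_k(x)|\leq d^k$ using that every vertex of the sphere $D_k(x)$ neighbors a vertex of $D_{k-1}(x)$, followed by summing the geometric series and using $d\geq 2$. No issues.
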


\begin{proof}
    Recall that $D_{R}(x)$ denotes the set of points in $T$ at distance $R$ from $x$. Note that for each $R \geq 1$, every point in $D_{R}(x)$ is distance one from some point in $D_{R-1}(x)$, and therefore
    \[D_{R}(x) \subseteq \bigcup_{y \in D_{R-1}(x)} B_{1}(y).\]
    Since each vertex has degree $d$, we have $|B_{1}(y)| = d$ for all $y$, and hence
    \[|D_{R}(x)| \leq \left|\bigcup_{y \in D_{R-1}(x)} B_{1}(y) \right| \leq |D_{R-1}(x)| \cdot d.\]
    
    Since $|D_{0}(x)| = 1$, by induction we have $|D_{R}(x)| \leq d^{R}$ for all $R$. It follows that 
    \[|B_R(x)| = \left|\bigcup_{i = 0}^{R} D_i(x) \right| \leq 1+d+d^2+\cdots+d^R = \frac{d^{R+1}-1}{d-1} \leq d^{R+1}. \qedhere\]
\end{proof}

\begin{lem}\label{enough ball lemma}
    There is a constant $C_1(R,d)>0$ such that we can find $C_1F(|A(t)|)$ disjoint balls of radius $R$ which are adjacent to but do not intersect $A(t)$.
\end{lem}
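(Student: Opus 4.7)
The plan is to center the desired balls at vertices at distance exactly $R+1$ from $A(t)$, and then extract a large pairwise-separated subset of such centers via a greedy packing.

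\textbf{Identifying candidate centers.} Set $B := B_R(A(t))$. By construction $\partial B = D_{R+1}(A(t))$, i.e., the vertex boundary of the $R$-thickening of $A(t)$ consists of precisely the vertices at distance $R+1$ from $A(t)$. Since $F$ is nondecreasing and $|B| \geq |A(t)|$, the defining inequality of the F\o lner isoperimetric profile gives
\[|D_{R+1}(A(t))| = |\partial B| \geq F(|B|) \geq F(|A(t)|).\]

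\textbf{Greedy packing.} To guarantee disjointness of the $R$-balls around the chosen centers, the centers must be pairwise at distance strictly greater than $2R$. I iteratively pick a vertex $y \in D_{R+1}(A(t))$, add it to a set $Y$, and delete every vertex in $D_{R+1}(A(t))$ within distance $2R$ of $y$. By Lemma \ref{size of balls}, at most $d^{2R+1}$ vertices are deleted per round, so the procedure terminates with
\[|Y| \geq \frac{|D_{R+1}(A(t))|}{d^{2R+1}} \geq \frac{F(|A(t)|)}{d^{2R+1}}.\]

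\textbf{Verifying the two properties.} For $y \in Y$ and any $v \in B_R(y)$, the triangle inequality yields $d(v,A(t)) \geq d(y,A(t)) - d(y,v) \geq (R+1) - R = 1$, so $B_R(y) \cap A(t) = \emptyset$. Conversely, along any shortest path of length $R+1$ from $y$ to $A(t)$, the vertex at distance $R$ from $y$ lies in $B_R(y)$ and is adjacent to the final vertex of the path, which is in $A(t)$; hence $B_R(y)$ is adjacent to $A(t)$. Setting $C_1(R,d) := d^{-(2R+1)}$ completes the proof.

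The argument has no serious obstacle; all three steps are elementary. The only point worth flagging is that one should not apply the isoperimetric profile directly to $A(t)$, because vertices of $\partial A(t)$ sit at distance $1$ from $A(t)$ and hence cannot center disjoint $R$-balls avoiding $A(t)$. Instead one thickens $A(t)$ by $R$ and applies the profile there, trading a marginally larger base set (controlled by monotonicity of $F$) for candidate centers at exactly the right distance.
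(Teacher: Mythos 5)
Your proof is correct and follows essentially the same route as the paper's: apply the isoperimetric profile to the $R$-thickening $B_R(A(t))$ to lower-bound $|D_{R+1}(A(t))|$ by $F(|A(t)|)$ using monotonicity of $F$, then greedily pack centers in $D_{R+1}(A(t))$ at pairwise distance greater than $2R$, losing a factor of at most $d^{2R+1}$ per step by the ball-volume bound. Your explicit verification that the resulting balls are adjacent to but disjoint from $A(t)$ is a detail the paper leaves implicit, but the argument is the same.
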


\begin{proof}
    We use $A$ as a shorthand notation for $A(t)$. As before, $D_{R}(A)$ denotes the set of points in $T$ whose distance from $A$ is exactly $R$.

    As noted above, we have $D_{R+1}(A) = \del B_{R}(A)$, and we have $|A| \leq |B_{R}(A)|$ (because $A \subseteq B_{R}(A)$. So, by definition of the F\o lner isoperimetric profile,
    \[|D_{R+1}(A)|=|\del B_{R}(A)|\geq F\left(|B_{R}(A)|\right) \geq F\left(|A|\right).\]
    
    Now consider the set of all possible centers of $R$-balls which are adjacent to but do not intersect $A(t)$.  These are exactly the points of $D_{R+1}(A)$.  We now pick the centers of our balls by a greedy procedure. First, pick any $x_1\in D_{R+1}(A)$ as the center of our first $R$-ball.  Then any vertex $v$ with $d(v,x_1)\leq 2R$ in the graph metric is no longer an available candidate to be the center of another such $R$-ball (since we need the $R$-balls to be disjoint).  At worst, we have removed $\vol(B_{2R})\leq d^{2R+1}$ points of $D_{R+1}(A)$ from the candidate pool. Repeatedly applying this argument, we see that we can put at least
   \[\left\lceil \frac{|D_{R+1}(A)|}{d^{2R+1}}\right\rceil  \geq  \left\lceil \frac{F(|A|)}{d^{2R+1}}\right\rceil \geq d^{-(2R+1)}F(|A|)\]
    disjoint balls of radius $R$ which are adjacent to but do not intersect $A(t)$.  This completes the proof with $C_1=d^{-(2R+1)}$.
\end{proof}

\begin{lem}\label{generalized lem 9}
Let $G$ (we use $G$ to denote a graph here since we use $T$ to denote a tree in the proof) be an infinite, connected, locally finite, vertex-transitive graph of degree $d$, and let $A(t)$ be the ball at time $t$ in the mean $1$ exponentially distributed FPP model on $G$.  There is a constant $C_2>0$ depending on $d$ such that 
\[F(|A(t)|)\leq C_2F(|A(t-2)|)\]
with high probability as $t\to\infty$.
\end{lem}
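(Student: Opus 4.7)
The plan is to reduce the statement to a bound on the ratio $|A(t)|/|A(t-2)|$ using the subadditivity of $F$, and then to control this ratio by comparing the FPP growth with a pure-birth (Yule) process.

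First, recall that $F$ is nondecreasing by definition and that Proposition \ref{sublinearity} gives $F(kn) \le kF(n)$ for positive integers $k$. Hence, if I can produce an integer constant $C_2 = C_2(d)$ such that
\[ |A(t)| \le C_2\,|A(t-2)| \qquad\text{with high probability as } t\to\infty, \]
then $F(|A(t)|) \le F(C_2|A(t-2)|) \le C_2 F(|A(t-2)|)$ follows immediately.

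To control $|A(t)|/|A(t-2)|$, note that in the exponential FPP model the instantaneous rate at which new vertices are added to $A(s)$ is $|\del A(s)|$, which is at most $d|A(s)|$ since every vertex has degree $d$. A standard coupling (thinning a dominating Poisson process of events) shows that, conditional on $|A(t-2)| = n$, the random variable $|A(t)|$ is stochastically dominated by $Y_n(2)$, where $(Y_n(s))_{s\ge 0}$ is a Yule process with per-individual birth rate $d$ started from $n$ individuals at time $0$. Since $\bb E[Y_n(2)] = ne^{2d}$ and $\operatorname{Var}(Y_n(2)) = ne^{2d}(e^{2d}-1)$, Chebyshev's inequality yields
\[ \bb P\bigl(|A(t)| > 2ne^{2d} \,\big|\, |A(t-2)| = n\bigr) \le \frac{e^{2d}-1}{ne^{2d}} \le \frac{1}{n}, \]
which tends to $0$ uniformly as $n\to\infty$.

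To close the argument I need $|A(t-2)| \to \infty$ almost surely as $t\to\infty$. This is elementary: since $G$ is infinite and connected, $|\del A(s)| \ge 1$ at every time $s$, so the counting process $|A(s)|$ stochastically dominates a rate-$1$ Poisson process, which diverges almost surely. Setting $C_2 := \lceil 2e^{2d}\rceil$ and combining this with the Chebyshev estimate above gives $\bb P(|A(t)| \le C_2|A(t-2)|) \to 1$, as desired. The main technical step is the comparison with the Yule process and the accompanying second-moment estimate; everything else is a direct consequence of the definitions and Proposition \ref{sublinearity}.
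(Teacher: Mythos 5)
Your proof is correct, and it takes a genuinely different route from the paper's. The paper also begins by reducing to $|A(t)|\le C_2|A(t-2)|$ via monotonicity and subadditivity of $F$, but it then splits $[t-2,t]$ into $\lceil 2/\epsi\rceil$ short intervals and proves $|A(s+\epsi)|\le (d+1)|A(s)|$ for each one. That step is carried out by coupling the newly infected sites with a forest of $(d-1)$-ary trees rooted at the boundary vertices of $A(s)$, choosing $\epsi$ so that the sub-$\epsi$ subtree of each rooted tree has expected size $\tfrac12$ (a subcritical branching computation), and invoking the law of large numbers over the boundary. You instead dominate the entire two-unit window at once: since the jump rate of $|A(s)|$ is $|\del A(s)|\le d|A(s)|$, the process is stochastically dominated by a Yule process of per-individual rate $d$, whose mean and variance are explicit, and Chebyshev finishes the job. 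Your argument is shorter and more elementary, yields the explicit constant $C_2=\lceil 2e^{2d}\rceil$, and makes the ``with high probability'' step cleaner by explicitly verifying $|A(t-2)|\to\infty$ via domination of a rate-$1$ Poisson process (the paper's LLN step implicitly needs $|\del A(t)|\to\infty$, which it does not spell out). What the paper's heavier tree-coupling machinery buys is reusability: the same coupling and its Lemma 3.11 reappear in Section 6 to compare Eden time with FPP time, so it is not redundant in the context of the paper as a whole. The one point worth making explicit in your write-up is that the domination must be set up conditionally on the full configuration $A(t-2)$ (not merely its cardinality), using memorylessness to see that the future evolution is the Markov jump process in which each boundary site is added at rate $1$; averaging over configurations of a given size then preserves the stochastic domination.
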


\begin{proof}
    We will show that there is a constant $C_2$ such that
    \begin{equation}\label{A(t)<<A(t-2)}
        |A(t)|\leq C_2|A(t-2)|
    \end{equation} 
    with high probability as $t\to\infty$.  Without loss of generality, we can assume $C_2$ is an integer (otherwise take its ceiling). Since $F$ is nondecreasing by definition and subadditive by Proposition \ref{sublinearity}, \eqref{A(t)<<A(t-2)} implies that
    \[F(|A(t)|) \leq F(C_2|A(t-2)|) \leq C_2F(|A(t-2)|),\]
    completing the proof.
    
    To prove \eqref{A(t)<<A(t-2)}, we will show that there is a constant $\epsi>0$ such that with high probability as $t\to\infty$ we have
    \begin{equation}\label{A(t+epsi)<<A(t)+del A(t)}
        |A(t+\epsi)|\leq |A(t)|+|\del A(t)|.
    \end{equation}
    In particular we have $|A(t+\epsi)|\leq (d+1)|A(t)|$. Then taking $C_2=(d+1)^{\lceil 2/\epsi\rceil}$ does the trick.
    
    To prove \eqref{A(t+epsi)<<A(t)+del A(t)}, we first consider a rooted tree $T$ of degree $d-1$ whose nodes are equipped with i.i.d., exponentially distributed passage times with mean $1$. Choose $\epsi>0$ such that for any node $p'\in T$, we have
    \[\bb{P}(\rho_{p'}<\epsi)=\frac{1}{d+1}.\]
    Let $T(\epsi) \subseteq T$ be the maximal subtree of $T$ containing the root whose nodes all have passage time less than $\epsi$.  Then the expected size $E$ of $T(\epsi)$ satisfies the recurrence relation
    \[E=\frac{1}{d+1}(1+(d-1)E).\]
    Solving this we get $E=\frac{1}{2}$. 
    
    Now we start to prove \eqref{A(t+epsi)<<A(t)+del A(t)}. Fix $t>0$.  We give a coupling between the distribution of passage times on a collection of $(d-1)$-ary rooted trees and the distribution of passage times for sites in $G$ outside $A(t)$.  We describe this coupling by assigning passage times to sites in $G$ using the passage times on the collection of trees.  With every $x\in \del A(t)$ we associate a tree $T_x$ and a graph homomorphism $f_x:T_x\to G$.  This map sends the root $o_x$ of the tree to $x$, and it sends the $d-1$ vertices adjacent to $o_x$ to $d-1$ different vertices adjacent to $x$, avoiding the one vertex contained in $A(t)$; and from there the map is defined inductively so that the $d$ vertices adjacent to a given $v$ are mapped to all $d$ vertices adjacent to $f_x(v)$.  Thus we obtain a map 
    \[f:\coprod\nolimits_{x\in \del A(t)}T_x\to G\]
    such that the root $o_x$ of $T_x$ is mapped via $f$ to $x \in \del A(t)$, and every path in $G \setminus A(t)$ connecting a vertex in $\del A(t)$ to another vertex in $G \setminus A(t)$ corresponds to a unique rooted path in $\coprod_{x} T_x$.
    
    See Figure \ref{coupling tree} for an intuitive picture for this construction.

\begin{figure}
\centering

\tikzset{every picture/.style={line width=0.75pt}} 

\begin{tikzpicture}[x=0.75pt,y=0.75pt,yscale=-1,xscale=1]

\draw  [color={rgb, 255:red, 0; green, 0; blue, 0 }  ,draw opacity=1 ][fill={rgb, 255:red, 155; green, 155; blue, 155 }  ,fill opacity=0.2 ] (274.93,144.04) .. controls (290.68,135.2) and (320.43,151.35) .. (333.68,170.7) .. controls (346.93,190.04) and (353.93,209.04) .. (333.68,230.7) .. controls (313.43,252.35) and (253.53,245.47) .. (243.68,230.7) .. controls (233.83,215.93) and (232.08,195.63) .. (238.85,179.88) .. controls (245.61,164.14) and (259.19,152.88) .. (274.93,144.04) -- cycle ;
\draw [color={rgb, 255:red, 0; green, 0; blue, 0 }  ,draw opacity=1 ][line width=2.25]    (302.4,145.87) -- (294,130.2) ;
\draw [color={rgb, 255:red, 74; green, 74; blue, 74 }  ,draw opacity=1 ]   (317.4,139.87) -- (302.4,145.87) ;
\draw [color={rgb, 255:red, 0; green, 0; blue, 0 }  ,draw opacity=1 ][line width=2.25]    (284.81,125.28) -- (294,130.2) ;
\draw [color={rgb, 255:red, 74; green, 74; blue, 74 }  ,draw opacity=1 ]   (294,130.2) -- (301.4,122.87) ;
\draw [color={rgb, 255:red, 74; green, 74; blue, 74 }  ,draw opacity=1 ]   (317.4,139.87) -- (329.4,142.87) ;
\draw [color={rgb, 255:red, 74; green, 74; blue, 74 }  ,draw opacity=1 ]   (317.4,139.87) -- (317.4,129.87) ;
\draw [color={rgb, 255:red, 74; green, 74; blue, 74 }  ,draw opacity=1 ]   (274.2,128) -- (285.4,125.87) ;
\draw [color={rgb, 255:red, 74; green, 74; blue, 74 }  ,draw opacity=1 ]   (285.4,125.87) -- (284.2,115) ;
\draw [color={rgb, 255:red, 74; green, 74; blue, 74 }  ,draw opacity=1 ]   (297.2,111) -- (301.4,122.87) ;
\draw [color={rgb, 255:red, 74; green, 74; blue, 74 }  ,draw opacity=1 ]   (301.4,122.87) -- (309.2,115) ;
\draw [color={rgb, 255:red, 74; green, 74; blue, 74 }  ,draw opacity=1 ]   (317.4,129.87) -- (328.2,123) ;
\draw [color={rgb, 255:red, 74; green, 74; blue, 74 }  ,draw opacity=1 ]   (317.4,129.87) -- (314.2,118) ;
\draw [color={rgb, 255:red, 74; green, 74; blue, 74 }  ,draw opacity=1 ]   (329.4,142.87) -- (333.2,134) ;
\draw [color={rgb, 255:red, 74; green, 74; blue, 74 }  ,draw opacity=1 ]   (329.4,142.87) -- (336.2,149) ;
\draw [color={rgb, 255:red, 74; green, 74; blue, 74 }  ,draw opacity=1 ]   (243.97,170.83) -- (226.19,171.02) ;
\draw [color={rgb, 255:red, 74; green, 74; blue, 74 }  ,draw opacity=1 ]   (245.59,154.75) -- (243.97,170.83) ;
\draw [color={rgb, 255:red, 74; green, 74; blue, 74 }  ,draw opacity=1 ]   (218.37,176.64) -- (226.19,171.02) ;
\draw [color={rgb, 255:red, 74; green, 74; blue, 74 }  ,draw opacity=1 ]   (226.19,171.02) -- (223.12,161.07) ;
\draw [color={rgb, 255:red, 74; green, 74; blue, 74 }  ,draw opacity=1 ]   (245.59,154.75) -- (253.81,145.5) ;
\draw [color={rgb, 255:red, 74; green, 74; blue, 74 }  ,draw opacity=1 ]   (245.59,154.75) -- (236.73,150.12) ;
\draw [color={rgb, 255:red, 74; green, 74; blue, 74 }  ,draw opacity=1 ]   (215.08,187.55) -- (218.37,176.64) ;
\draw [color={rgb, 255:red, 74; green, 74; blue, 74 }  ,draw opacity=1 ]   (218.37,176.64) -- (208.18,172.67) ;
\draw [color={rgb, 255:red, 74; green, 74; blue, 74 }  ,draw opacity=1 ]   (210.65,159.3) -- (223.12,161.07) ;
\draw [color={rgb, 255:red, 74; green, 74; blue, 74 }  ,draw opacity=1 ]   (223.12,161.07) -- (219.75,150.51) ;
\draw [color={rgb, 255:red, 74; green, 74; blue, 74 }  ,draw opacity=1 ]   (236.73,150.12) -- (235.64,137.37) ;
\draw [color={rgb, 255:red, 74; green, 74; blue, 74 }  ,draw opacity=1 ]   (236.73,150.12) -- (224.73,147.47) ;
\draw [color={rgb, 255:red, 74; green, 74; blue, 74 }  ,draw opacity=1 ]   (253.81,145.5) -- (247.71,138.03) ;
\draw [color={rgb, 255:red, 74; green, 74; blue, 74 }  ,draw opacity=1 ]   (253.81,145.5) -- (262.39,142.31) ;
\draw [color={rgb, 255:red, 74; green, 74; blue, 74 }  ,draw opacity=1 ]   (243.68,230.7) -- (241.44,248.33) ;
\draw [color={rgb, 255:red, 74; green, 74; blue, 74 }  ,draw opacity=1 ]   (227.98,226.89) -- (243.68,230.7) ;
\draw [color={rgb, 255:red, 74; green, 74; blue, 74 }  ,draw opacity=1 ]   (245.93,256.85) -- (241.44,248.33) ;
\draw [color={rgb, 255:red, 74; green, 74; blue, 74 }  ,draw opacity=1 ]   (241.44,248.33) -- (231.16,250.02) ;
\draw [color={rgb, 255:red, 74; green, 74; blue, 74 }  ,draw opacity=1 ]   (227.98,226.89) -- (219.94,217.48) ;
\draw [color={rgb, 255:red, 74; green, 74; blue, 74 }  ,draw opacity=1 ]   (227.98,226.89) -- (222.18,235.03) ;
\draw [color={rgb, 255:red, 74; green, 74; blue, 74 }  ,draw opacity=1 ]   (256.3,261.6) -- (245.93,256.85) ;
\draw [color={rgb, 255:red, 74; green, 74; blue, 74 }  ,draw opacity=1 ]   (245.93,256.85) -- (240.61,266.4) ;
\draw [color={rgb, 255:red, 74; green, 74; blue, 74 }  ,draw opacity=1 ]   (227.7,262.12) -- (231.16,250.02) ;
\draw [color={rgb, 255:red, 74; green, 74; blue, 74 }  ,draw opacity=1 ]   (231.16,250.02) -- (220.24,251.9) ;
\draw [color={rgb, 255:red, 74; green, 74; blue, 74 }  ,draw opacity=1 ]   (222.18,235.03) -- (209.4,234.37) ;
\draw [color={rgb, 255:red, 74; green, 74; blue, 74 }  ,draw opacity=1 ]   (222.18,235.03) -- (217.91,246.56) ;
\draw [color={rgb, 255:red, 74; green, 74; blue, 74 }  ,draw opacity=1 ]   (219.94,217.48) -- (211.71,222.51) ;
\draw [color={rgb, 255:red, 74; green, 74; blue, 74 }  ,draw opacity=1 ]   (219.94,217.48) -- (217.96,208.54) ;
\draw [color={rgb, 255:red, 74; green, 74; blue, 74 }  ,draw opacity=1 ]   (342.35,185) -- (351.29,169.83) ;
\draw [color={rgb, 255:red, 74; green, 74; blue, 74 }  ,draw opacity=1 ]   (354.92,195.14) -- (342.35,185) ;
\draw [color={rgb, 255:red, 74; green, 74; blue, 74 }  ,draw opacity=1 ]   (350.61,160.14) -- (351.29,169.83) ;
\draw [color={rgb, 255:red, 74; green, 74; blue, 74 }  ,draw opacity=1 ]   (351.29,169.83) -- (361.16,172.66) ;
\draw [color={rgb, 255:red, 74; green, 74; blue, 74 }  ,draw opacity=1 ]   (354.92,195.14) -- (358.43,207.15) ;
\draw [color={rgb, 255:red, 74; green, 74; blue, 74 }  ,draw opacity=1 ]   (354.92,195.14) -- (363.32,190.16) ;
\draw [color={rgb, 255:red, 74; green, 74; blue, 74 }  ,draw opacity=1 ]   (343.2,151.4) -- (350.61,160.14) ;
\draw [color={rgb, 255:red, 74; green, 74; blue, 74 }  ,draw opacity=1 ]   (350.61,160.14) -- (359.13,153.68) ;
\draw [color={rgb, 255:red, 74; green, 74; blue, 74 }  ,draw opacity=1 ]   (369.02,163.07) -- (361.16,172.66) ;
\draw [color={rgb, 255:red, 74; green, 74; blue, 74 }  ,draw opacity=1 ]   (361.16,172.66) -- (371.68,175.57) ;
\draw [color={rgb, 255:red, 74; green, 74; blue, 74 }  ,draw opacity=1 ]   (363.32,190.16) -- (374.5,196.2) ;
\draw [color={rgb, 255:red, 74; green, 74; blue, 74 }  ,draw opacity=1 ]   (363.32,190.16) -- (371.67,181.44) ;
\draw [color={rgb, 255:red, 74; green, 74; blue, 74 }  ,draw opacity=1 ]   (358.43,207.15) -- (367.78,206.06) ;
\draw [color={rgb, 255:red, 74; green, 74; blue, 74 }  ,draw opacity=1 ]   (358.43,207.15) -- (356.69,216.16) ;
\draw [color={rgb, 255:red, 74; green, 74; blue, 74 }  ,draw opacity=1 ]   (305.51,243.43) -- (322.3,249.26) ;
\draw [color={rgb, 255:red, 74; green, 74; blue, 74 }  ,draw opacity=1 ]   (298.54,258.01) -- (305.51,243.43) ;
\draw [color={rgb, 255:red, 74; green, 74; blue, 74 }  ,draw opacity=1 ]   (331.56,246.63) -- (322.3,249.26) ;
\draw [color={rgb, 255:red, 74; green, 74; blue, 74 }  ,draw opacity=1 ]   (322.3,249.26) -- (321.83,259.67) ;
\draw [color={rgb, 255:red, 74; green, 74; blue, 74 }  ,draw opacity=1 ]   (298.54,258.01) -- (287.68,263.93) ;
\draw [color={rgb, 255:red, 74; green, 74; blue, 74 }  ,draw opacity=1 ]   (298.54,258.01) -- (305.32,265.36) ;
\draw [color={rgb, 255:red, 74; green, 74; blue, 74 }  ,draw opacity=1 ]   (338.36,237.47) -- (331.56,246.63) ;
\draw [color={rgb, 255:red, 74; green, 74; blue, 74 }  ,draw opacity=1 ]   (331.56,246.63) -- (339.81,253.8) ;
\draw [color={rgb, 255:red, 74; green, 74; blue, 74 }  ,draw opacity=1 ]   (332.96,265.55) -- (321.83,259.67) ;
\draw [color={rgb, 255:red, 74; green, 74; blue, 74 }  ,draw opacity=1 ]   (321.83,259.67) -- (321.42,270.74) ;
\draw [color={rgb, 255:red, 74; green, 74; blue, 74 }  ,draw opacity=1 ]   (305.32,265.36) -- (302.03,277.73) ;
\draw [color={rgb, 255:red, 74; green, 74; blue, 74 }  ,draw opacity=1 ]   (305.32,265.36) -- (315.71,271.92) ;
\draw [color={rgb, 255:red, 74; green, 74; blue, 74 }  ,draw opacity=1 ]   (287.68,263.93) -- (290.9,273.03) ;
\draw [color={rgb, 255:red, 74; green, 74; blue, 74 }  ,draw opacity=1 ]   (287.68,263.93) -- (278.53,264.03) ;
\draw  [fill={rgb, 255:red, 0; green, 0; blue, 0 }  ,fill opacity=1 ] (300.48,145.87) .. controls (300.4,144.8) and (301.2,143.94) .. (302.26,143.94) .. controls (303.32,143.94) and (304.25,144.8) .. (304.32,145.87) .. controls (304.4,146.93) and (303.6,147.79) .. (302.54,147.79) .. controls (301.48,147.79) and (300.55,146.93) .. (300.48,145.87) -- cycle ;
\draw  [fill={rgb, 255:red, 155; green, 155; blue, 155 }  ,fill opacity=0.08 ] (206,115) .. controls (219.2,97.8) and (227.2,92.8) .. (271.2,89.8) .. controls (282.62,89.02) and (294.06,89.49) .. (304.99,90.7) .. controls (336.16,94.16) and (363.11,103.7) .. (373.2,107.8) .. controls (386.83,113.34) and (428.34,123.48) .. (404.77,163.14) .. controls (381.2,202.8) and (423.2,230.8) .. (405.2,255.8) .. controls (387.2,280.8) and (345.2,298.8) .. (297.34,286.93) .. controls (249.48,275.07) and (184.13,293.1) .. (174.2,274.8) .. controls (164.27,256.5) and (168.69,240.88) .. (173.15,217.17) .. controls (177.6,193.45) and (192.8,132.2) .. (206,115) -- cycle ;
\draw  [fill={rgb, 255:red, 0; green, 0; blue, 0 }  ,fill opacity=1 ] (283.48,125.87) .. controls (283.4,124.8) and (284.2,123.94) .. (285.26,123.94) .. controls (286.32,123.94) and (287.25,124.8) .. (287.32,125.87) .. controls (287.4,126.93) and (286.6,127.79) .. (285.54,127.79) .. controls (284.48,127.79) and (283.55,126.93) .. (283.48,125.87) -- cycle ;
\draw    (267.67,73.27) .. controls (278.17,69.45) and (286.32,101.12) .. (292.26,120.52) ;
\draw [shift={(293.09,123.18)}, rotate = 252.47] [fill={rgb, 255:red, 0; green, 0; blue, 0 }  ][line width=0.08]  [draw opacity=0] (8.93,-4.29) -- (0,0) -- (8.93,4.29) -- cycle    ;

\draw (274,192.4) node [anchor=north west][inner sep=0.75pt]    {$A( t)$};
\draw (293.93,150.44) node [anchor=north west][inner sep=0.75pt]    {$x$};
\draw (338,119.4) node [anchor=north west][inner sep=0.75pt]    {$T_{x}( \varepsilon )$};
\draw (348,232.4) node [anchor=north west][inner sep=0.75pt]    {$A( t+\varepsilon )$};
\draw (250,61.8) node [anchor=north west][inner sep=0.75pt]    {$\gamma '$};

\end{tikzpicture}
\caption{
An illustration of how we couple sites in $A(t+\epsi)-A(t)$ with nodes in $\coprod_{x\in\del A(t)}T_x$. For every $x\in \del A(t)$, a tree $T_x$ rooted at $x$ is mapped into $A(t+\epsi)$. A path $\gamma':[0,1]\to \coprod_{x\in \del A(t)}T_x$ is illustrated in the figure, identified with $f(\gamma'):[0,1]\to G$. The same two sites in $G$ can be connected by multiple paths in different trees, since $f:\coprod_{x\in \del A(t)} T_x\to G$ is not injective.  However, each path from some $x \in \del A(t)$ to another vertex $y$ is associated with a unique path in $T_x$ rooted at $x$.
}
\label{coupling tree}
\end{figure}

    Now we assign passage times to sites in $G \setminus A(t)$.  To each $x \in \del A(t)$ we assign the passage time of $o_x$.  To define the passage time for other vertices, we first give a couple of definitions:
    \begin{itemize}
    \item The \deffont{passage time along a path} in a graph is the sum of the passage times of all the vertices of the path, including endpoints.
    \item Given a vertex $y \in G \setminus A(t)$, the \deffont{passage time from $A(t)$ to $y$} is the minimal passage time along any path from any vertex of $\del A(t)$ to $y$.
    \item Given a vertex $z \in \coprod_x T_x$, the \deffont{passage time to just before $z$} is the sum of the passage times of the vertices along the unique simple path from $x$ to $z$, \deffont{not} including $z$.
    \end{itemize}
    Then to every remaining $y \in G \setminus A(t)$, we assign the passage time of $\tilde y$, where $\tilde y$ minimizes the passage time to just before $\tilde y$ among all vertices in $f^{-1}(y)$.

    Since the choice of $\tilde y$ does not depend on the passage time of $\tilde y$, the resulting passage time of $y$ is exponentially distributed with mean $1$ and independent of all other passage times.  Therefore this indeed defines a coupling.

    \begin{lem}\label{lem 3.11}
        For every $y \in G \setminus A(t)$, if the passage time from $A(t)$ to $y$ is $\rho$, then there is a $\tilde y \in f^{-1}(y)$ such that the passage time from $\tilde y$ to the root of the tree containing it is at most $\rho$.
    \end{lem}
    \begin{proof}
        We prove this for every $y \in G \setminus A(t)$ by induction on the passage time from $A(t)$ to $y$.  For every site $x \in \partial A(t)$, its passage time is the passage time of $o_x$, so the statement is true.  Now let $y \in G \setminus A(t)$ be another site, let $\tilde y \in T_x$ be the site that it is coupled to and consider the path which realizes the passage time from $A(t)$ to $y$ (the ``shortest path'').  By induction, the passage time from $A(t)$ to the second-to-last vertex of this path is greater than the passage time to just before $\tilde y$.  Therefore, by the definition of the coupling, the passage time from $A(t)$ to $y$ is greater than the passage time from $o_x$ to $\tilde y$.
    \end{proof}
    In particular, if $y \in A(t+\epsi) \setminus A(t)$, then there is some $T_x$ and some $\tilde y \in f^{-1}(y) \cap T_x$ such that the passage time from $o_x$ to $\tilde y$ is at most $\epsi$.  Therefore $\tilde y$ is contained in the subtree $T_x(\epsi)$ of $T_x$, which is again the maximal subtree of $T_x$ containing the root whose nodes all have passage time less than $\epsi$.  It follows that
    \[A(t+\epsi) \setminus A(t) \subseteq f\Bigl(\coprod\nolimits_{x \in \del A(t)} T_x(\epsi)\Bigr).\]
    We determined above that $\mathbb E(|T_x(\epsi)|)=\frac{1}{2}$.  Since the $|T_x(\epsi)|$ are i.i.d.\ random variables, the law of large numbers implies that with high probability as $t \to \infty$,
    \[\Bigl\lvert \coprod\nolimits_{x \in \del A(t)} T_x(\epsi) \Bigr\rvert \leq \lvert\del A(t)\rvert.\]
    This proves \eqref{A(t+epsi)<<A(t)+del A(t)}.
\end{proof}

By Lemma \ref{enough ball lemma}, at time $t-2$, we can find $N \geq C_1F(|A(t-2)|)$ disjoint $R$-balls $B_1,\ldots,B_N$ that are adjacent to but do not intersect $A(t-2)$. By Lemma \ref{generalized lem 9},
\[N \geq C_1F(|A(t-2)|) \geq C_1C_2^{-1}F(|A(t)|).\]

\begin{lem}\label{enough fraction ball lemma}
Let $S$ be as in the statement of Theorem \ref{generalized theorem 8}.  There is a $C_3=C_3(R,d)>0$ such that with high probability as $t \to \infty$, the number of values of $j$, for $1\leq j\leq N$, such that $B_j\cap A(t)$ is isomorphic to $S$ is at least $C_3(R,d)C_1C_2^{-1}F(|A(t)|)$. Here, the $B_j$ are disjoint $R$-balls $B_1,\ldots,B_N$ that are adjacent to but do not intersect $A(t-2)$, whose existence is guaranteed by Lemma \ref{enough ball lemma}.
\end{lem}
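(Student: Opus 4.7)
The plan is to identify, for each of the balls $B_j$ (with center $v_j$), an event $E_j$ depending only on the passage times of vertices in $B_j$ such that (a) $E_j$ forces $B_R(v_j) \cap A(t) = \phi_{v_j}(S)$, with $\phi_{v_j}$ as in Remark \ref{rmk:strengthening}; (b) $\bb{P}(E_j \mid A(t-2))$ is bounded below by a constant $p=p(R,d)>0$; and (c) the $E_j$ are mutually independent conditional on $A(t-2)$. A Chernoff bound on $N$ independent Bernoulli($p$) variables then yields the conclusion with $C_3 = p/2$. To set up the events, the key is to invoke memorylessness: conditioning on $A(t-2)$, for each $v \notin A(t-2)$ let $N_v$ denote the first time a neighbor of $v$ gets infected and let $\tau_v$ denote the time from $\max(t-2, N_v)$ until $v$ is infected. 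The $\tau_v$ are then i.i.d.\ exponential of mean $1$, independent of the history up to $t-2$. Setting $\delta := 1/|S|$, define
\[E_j := \bigl\{\tau_v < \delta \text{ for all } v \in \phi_{v_j}(S)\bigr\} \cap \bigl\{\tau_v > 2 \text{ for all } v \in B_j \setminus \phi_{v_j}(S)\bigr\}.\]
Since the $B_j$ are disjoint and $|B_j| \leq d^{R+1}$ by Lemma \ref{size of balls}, properties (b) and (c) follow at once, with $p = (1-e^{-\delta})^{|S|} e^{-2(|B_j|-|S|)}$.

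For (a): because $v_j \in D_{R+1}(A(t-2))$, some vertex $u^* \in \phi_{v_j}(D_R(v_j)) \subseteq \phi_{v_j}(S)$ is adjacent to $A(t-2)$, so $N_{u^*} \leq t-2$. Under $E_j$, an induction on the distance within $\phi_{v_j}(S)$ from $u^*$ shows that every $v \in \phi_{v_j}(S)$ has infection time at most $t-2 + |S|\delta = t-1 < t$, using at each step that $\tau_v < \delta$ together with the fact that $\phi_{v_j}(S)$ is connected with at most $|S|$ vertices. Conversely, because $S \supseteq D_R(v_j)$, every $v \in B_j \setminus \phi_{v_j}(S)$ lies in $B_{R-1}(v_j)$, and so all its neighbors lie in $B_j$; this forces $N_v \geq t-2$, so the infection time of $v$ is $\max(t-2,N_v) + \tau_v > t-2+2 = t$, meaning $v \notin A(t)$.

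Combining Lemmas \ref{enough ball lemma} and \ref{generalized lem 9}, with high probability as $t \to \infty$ we have $N \geq C_1 C_2^{-1} F(|A(t)|)$; a standard concentration bound on the independent Bernoullis $\mathbf{1}_{E_j}$ then gives that at least $(p/2) N$ of them hold with high probability, completing the proof. The main technical obstacle is that the raw passage times $\rho_v$ are not independent of $A(t-2)$ (conditioning on $v \notin A(t-2)$ biases $\rho_v$ upward), so working directly with them would couple the events $E_j$ to the intricate history of the process; the residual times $\tau_v$, together with the memoryless property, are the device that restores clean conditional independence and a uniform lower bound on $\bb{P}(E_j \mid A(t-2))$.
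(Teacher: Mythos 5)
Your proposal is correct and follows essentially the same route as the paper's proof: a per-ball event depending only on the (residual) passage times inside $B_j$, forcing small times on the translate of $S$ and large times on $B_j$ minus that translate, with memorylessness giving conditional i.i.d.\ events of probability bounded below, and a law-of-large-numbers/Chernoff step to conclude. Your explicit construction of the residual clocks $\tau_v$ and the neighbor argument for why $B_j \setminus \phi_{v_j}(S)$ stays uninfected are somewhat more careful than the paper's treatment, but the underlying argument is the same.
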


\begin{proof}
Let $p$ be a site that is not contained in $A(t-2)$. Denote its passage time by $\rho_p$. By the memorylessness of the exponential distribution, these passage times $\rho_p$ are all i.i.d.  Let $S_j$ be a translate of $S$ inside $B_j$.  Now let $X_j$ be the event that for all $p\in B_j$,
\begin{align*}
    \rho_p&\leq d^{-R-1}\equiv \epsi  &\text{if }p\in S_j\\
    \rho_p&\geq 3 &\text{if }p\in B_j-S.
\end{align*}
Then all the $X_j$ are also i.i.d. and the probability that $X_j$ occurs is positive since
\begin{align*}
    \mathbb{P}(X_j)&=\mathbb{P}(\rho_p\leq \epsi)^{|S|}\cdot \mathbb{P}(\rho_p\geq 3)^{|B_j-S_j|}\\
    &\geq \min\qty{\mathbb{P}(\rho_p\leq \epsi),\mathbb{P}(\rho_p\geq 3)}^{|B_j|}\\
    &\geq \min\qty{\mathbb{P}(\rho_p\leq \epsi),\mathbb{P}(\rho_p\geq 3)}^{d^{R+1}}>0.
\end{align*}
Taking $C_3(R,d)=\frac{1}{2}\min\qty{\mathbb{P}(\rho_p\leq \epsi),\mathbb{P}(\rho_p\geq 3)}^{d^{R+1}}>0$ as our constant, by the law of large numbers, with high probability we know as least $C_3(R,d)C_1C_2^{-1}F(|A(t)|)$ of the $X_j$ will occur.

Suppose $X_j$ occurs. Let $p$ be a vertex of $A(t-2)$ which is adjacent to $B_j$.  Since $S_j$ is connected and (since it contains the whole outer layer of $B_j$) contains a vertex adjacent to $p$, every point of $S_j$ is connected to $p$ by a path $\gamma$ of length at most $d^{R+1}-1$. So the time it takes for all points in $S$ to get infected is at most
\[
\sum_{q\in \gamma}\rho_q\leq 
\ell(\gamma)\epsi\leq (d^{R+1}-1)\epsi=\frac{d^{R+1}-1}{d^{R+1}}<1.\]
Therefore, for $t-1<s<t+1$, we know $A(s)$ will contain all points of $S_j$ and no points in $B_j-S_j$. In particular, when $s=t$, we know $A(t)$ will contain all points of $S_j$ and no points of $B_j-S_j$. Since we have shown that at least $C_3C_1C_2^{-1}F(|A(t)|)$ of such events will occur with high probability, this concludes the proof.
\end{proof}

Taking $C=C_3C_1C_2^{-1}$ completes the proof of Theorem \ref{generalized theorem 8}.

\section{The Eden model on hyperbolic tessellations}\label{The Eden model on hyperbolic tessellations}
We have established our key technical result Theorem \ref{generalized theorem 8}. In the following two sections we demonstrate how Theorem \ref{generalized theorem 8} can be applied to obtain topological information about the Eden model on hyperbolic tessellations, parallel the results of \cite{MRS}.  Later, we will give a further generalization of these results.

\subsection*{General setup}
Denote the $n$-dimensional hyperbolic space by $\bb H^n$.  A \deffont{tessellation} of $\bb H^n$ is a covering by a finite set of isometry classes of \deffont{tiles}: compact sets $K_\alpha$ with nonempty interior such that a point is in the boundary of $K_\alpha$ if and only if it is shared with some other $K_\beta$.

We construct a large family of regular tessellations (ones using one isometry type of cell) using Voronoi cells of orbits of lattices.  Let $\Gamma \subset \operatorname{Isom}(\bb H^n)$ be a cocompact lattice.
\begin{lem}\label{trivialstab}
  There is a point $x \in \bb H^n$ with trivial stabilizer with respect to the action of $\Gamma$.
\end{lem}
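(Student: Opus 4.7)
The plan is to show that the set of points with nontrivial stabilizer is a proper subset of $\bb H^n$, so its complement is nonempty. The set of ``bad'' points is
\[B = \bigcup_{\gamma \in \Gamma \setminus \{1\}} \operatorname{Fix}(\gamma),\]
so I want to argue that $B \neq \bb H^n$.

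First I would verify that $\Gamma$ is countable. Since $\operatorname{Isom}(\bb H^n)$ is a second-countable locally compact Hausdorff group and $\Gamma$ is discrete in it, $\Gamma$ must be countable (any discrete subset of a second-countable space is countable). So $B$ is a countable union of sets of the form $\operatorname{Fix}(\gamma)$ for $\gamma \neq 1$.

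Next I would invoke Lemma \ref{hyp fixed set}: for each nontrivial $\gamma \in \Gamma$, the fixed set $\operatorname{Fix}(\gamma)$ is either empty or a hyperbolic subspace of strictly smaller dimension than $n$. Any such proper hyperbolic subspace has measure zero with respect to the hyperbolic volume measure (equivalently, it is closed and nowhere dense).

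Finally, a countable union of measure-zero subsets of $\bb H^n$ has measure zero, hence cannot equal all of $\bb H^n$ (which has infinite positive measure). Therefore $\bb H^n \setminus B$ is nonempty, and any point $x$ in this complement has trivial stabilizer. Alternatively, one could conclude via the Baire category theorem, since $\bb H^n$ is a complete metric space and the $\operatorname{Fix}(\gamma)$ are closed nowhere dense sets, so their countable union is meager and cannot exhaust $\bb H^n$.

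The only real subtlety is the countability of $\Gamma$, which follows immediately from discreteness in a second-countable group; all other ingredients have already been set up in the preliminaries.
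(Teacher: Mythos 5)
Your proposal is correct and follows essentially the same route as the paper: both invoke Lemma \ref{hyp fixed set} to see that each nontrivial fixed set is a proper hyperbolic subspace, and then use countability of $\Gamma$ to conclude that the union of these fixed sets cannot exhaust $\bb H^n$. You simply fill in the details (countability of a discrete subgroup of a second-countable group, and the measure-zero or Baire category argument) that the paper leaves implicit.
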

\begin{proof}
  For every non-identity element $g \in \Gamma$, let $X_g$ be its fixed set. By Lemma \ref{hyp fixed set}, each $X_g$ is either empty or a hyperbolic subspace of lower dimension.  Since $\Gamma$ is countable, the union of all the $X_g$ is a strict subset of $\bb H^n$.
\end{proof}
Now pick a point $x \in \bb H^n$ with trivial stabilizer, and consider the orbit $\ms O_x$ of $x$ under the action of $\Gamma$. The \deffont{Voronoi cell} of an element $x_\alpha \in \ms{O}_x$ is the set
\[V_\alpha=\bigl\{y\in \bb{H}^n: \forall x_\beta\in \ms{O}_x\;d(y,x_\alpha)\leq d(y,x_\beta)\bigr\}.\]
Clearly, the Voronoi cells associated to $\ms O_x$ are closed and isometric, since they are translates of each other under the action of $\Gamma$.  We will show that they are compact, convex, and the tiles of a tessellation.

\begin{lem}
  The Voronoi cells associated to $\ms O_x$ have bounded diameter, hence they are compact.
\end{lem}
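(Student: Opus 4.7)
The plan is to exploit cocompactness of $\Gamma$ to show the orbit $\ms O_x$ is a net in $\bb H^n$, and then observe that the Voronoi cell of $x_\alpha$ must lie inside a ball whose radius is the density of this net.

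First I would argue that there exists a constant $D>0$ such that every point of $\bb H^n$ lies within distance $D$ of some point of $\ms O_x$. Since $\Gamma$ acts geometrically on $\bb H^n$ (by the remark in the preliminaries that cocompact lattices in isometry groups of homogeneous manifolds act geometrically), the quotient $\bb H^n/\Gamma$ is compact. Therefore a fundamental domain for $\Gamma$ can be taken with bounded diameter, and consequently every orbit of $\Gamma$ is a $D$-dense subset of $\bb H^n$ for some $D$. Equivalently, for every $y\in\bb H^n$ there is some $x_\beta\in\ms O_x$ with $d(y,x_\beta)\leq D$.

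Next I would use this to bound the Voronoi cell. If $y\in V_\alpha$, then by definition of $V_\alpha$ we have
\[d(y,x_\alpha)\leq d(y,x_\beta)\leq D\]
for the $x_\beta$ provided by the previous paragraph. Hence $V_\alpha\subseteq \overline{B}_D(x_\alpha)$, which in particular gives $\diam(V_\alpha)\leq 2D$.

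Finally, since $V_\alpha$ is closed (as an intersection of closed half-spaces of the form $\{y:d(y,x_\alpha)\leq d(y,x_\beta)\}$) and contained in a closed metric ball of the proper geodesic space $\bb H^n$, it is a closed bounded subset of a proper metric space, hence compact. There is really no obstacle here: the only content is recognizing that cocompactness of the lattice gives uniform density of orbits, and the Voronoi condition instantly converts this density bound into a diameter bound on the cell.
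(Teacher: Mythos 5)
Your proof is correct and follows essentially the same route as the paper: cocompactness gives a uniform constant $D$ such that the orbit $\ms O_x$ is $D$-dense, and the defining inequality of the Voronoi cell then immediately yields $V_\alpha\subseteq \overline{B}_D(x_\alpha)$ and $\diam(V_\alpha)\leq 2D$. Your additional remark that $V_\alpha$ is closed (as an intersection of closed sets) and hence compact as a closed bounded subset of a proper space is a welcome bit of explicitness the paper leaves implicit.
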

\begin{proof}
  Since $\Gamma$ is a cocompact lattice, by definition there is a compact set $K \subset \bb H^n$ such that $\bigcup_{g \in \Gamma} g \cdot K=\bb H^n$.  In particular, $h \cdot x \in K$ for some $h \in \Gamma$.  Then $K$ has bounded diameter $D$.  Since every point $y \in \bb H^n$ is contained in $g \cdot K$ for some $g \in \Gamma$, $y$ is at distance at most $D$ from $gh \cdot x$.  Therefore every point in the Voronoi cell $V_\alpha$ is at distance at most $D$ from $x_\alpha$.  By the triangle inequality, $\diam(V_\alpha) \leq 2D$.
\end{proof}

\begin{lem} \label{Voronoi-tessellate}
Voronoi cells associated with $\ms O_x$ are the tiles of a tessellation.
\end{lem}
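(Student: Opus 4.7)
My plan is to verify that the Voronoi cells $\{V_\alpha\}$ satisfy the three defining properties of a tessellation from the text: they cover $\bb{H}^n$, each has nonempty interior, and a point lies in the topological boundary $\partial V_\alpha$ if and only if it is shared with some other cell. The central technical fact I would rely on throughout is that the orbit $\ms{O}_x$ is \emph{locally finite} in $\bb{H}^n$: since $\Gamma$ is a discrete subgroup of $\Isom(\bb{H}^n)$ and the stabilizer of $x$ is trivial by choice, the orbit map $\Gamma \to \ms{O}_x$ is a bijection, and proper discontinuity of the $\Gamma$-action forces any compact set to contain only finitely many orbit points. I would record this as a preliminary observation.

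For the covering, given $y \in \bb{H}^n$, I would pick some $x_{\alpha_0} \in \ms{O}_x$ and work inside the closed ball of radius $d(y, x_{\alpha_0})$ around $y$: this ball meets $\ms{O}_x$ in only finitely many points, so the function $x_\beta \mapsto d(y, x_\beta)$ attains its minimum on $\ms{O}_x$, and a minimizer $x_\alpha$ gives $y \in V_\alpha$. For nonempty interior, I would set $r = \min_{\beta \neq \alpha} d(x_\alpha, x_\beta) > 0$ (positive by local finiteness) and use the triangle inequality to show that every $y$ with $d(y, x_\alpha) < r/2$ satisfies $d(y, x_\beta) \geq d(x_\alpha, x_\beta) - d(y, x_\alpha) > r/2 > d(y, x_\alpha)$ for all $\beta \neq \alpha$; hence $B_{r/2}(x_\alpha) \subseteq \mathrm{int}(V_\alpha)$.

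The boundary condition has two directions. For one direction, if $y \in V_\alpha \cap V_\beta$ with $\alpha \neq \beta$, then $d(y, x_\alpha) = d(y, x_\beta)$, so $y$ lies on the perpendicular bisector of $x_\alpha$ and $x_\beta$. Points in the open half-space on the $x_\beta$-side of this bisector are strictly closer to $x_\beta$ than to $x_\alpha$, hence outside $V_\alpha$; they accumulate at $y$, so $y \in \partial V_\alpha$. The converse is the main obstacle: if $y \in \partial V_\alpha$, I would pick a sequence $y_n \to y$ with $y_n \notin V_\alpha$, and for each $n$ an index $\beta_n \neq \alpha$ with $d(y_n, x_{\beta_n}) < d(y_n, x_\alpha)$. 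The distances $d(y, x_{\beta_n})$ are then uniformly bounded, so all the $x_{\beta_n}$ lie in a single bounded ball around $y$. Local finiteness of $\ms{O}_x$ then forces some index $\beta$ to appear infinitely often, and passing to the limit yields $d(y, x_\beta) \leq d(y, x_\alpha)$. Combining with $y \in V_\alpha$ (closed) gives equality, so $y \in V_\beta$.

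The key difficulty is this final compactness/pigeonhole step, which turns a sequence of potentially different competitor indices $\beta_n$ into a single $\beta$ that witnesses sharing at $y$. Every other step reduces to the triangle inequality and basic consequences of discrete group actions on proper metric spaces.
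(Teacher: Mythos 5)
Your proof is correct and covers the same ground as the paper's, plus more. The covering step is identical in substance (existence of a nearest orbit point; you justify it via local finiteness of $\ms O_x$, the paper via closedness of the orbit in the proper space $\bb H^n$). Where you diverge is in the boundary condition. The paper proves only your first direction---a shared point is a boundary point---by taking the minimal geodesic $\gamma$ from $y$ to $x_\alpha$ and showing $\gamma((0,1])$ misses $V_\beta$, using uniqueness of geodesics to dispose of the equality case; the points $\gamma(1/n)$ then witness $y \in \partial V_\beta$. You instead produce witnesses on the other side, in the open half-space of the bisector $B(x_\alpha,x_\beta)$ nearer to $x_\beta$. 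That works, but the claim that this half-space accumulates at $y$ is exactly where the geometric content sits: it requires either the totally geodesic structure of bisectors (which the paper only establishes in a later lemma) or the same uniqueness-of-geodesics argument the paper runs, so the two routes are really two phrasings of one fact. Finally, you verify two clauses of the tessellation definition that the paper's proof silently omits: nonempty interior (your $B_{r/2}(x_\alpha)$ argument) and the converse direction that a topological boundary point is shared with another cell (your compactness/pigeonhole step, which is correct and genuinely needs local finiteness of the orbit). Your version is therefore the more complete verification of the definition as stated; the paper's is shorter because it checks only covering and the ``shared implies boundary'' implication.
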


\begin{proof}
Let $\{V_\alpha\}$ be the set of Voronoi cells associated with $\ms O_x$.  It suffices to show two things:
\begin{enumerate}
\item The $V_\alpha$ cover $\bb H^n$.
\item If a point $y \in \bb H^n$ is contained in $V_\alpha \cap V_\beta$ for some $\alpha \neq \beta$, then $y \in \del V_\beta$.
\end{enumerate}

To see (1), notice that $\ms O_x$ is a closed set.  Therefore for any $y \in \td X$ there is a closest point of $\ms O_x$.

To prove (2), consider a point $y \in V_\alpha \cap V_\beta$ and consider a minimal geodesic $\gamma:[0,1] \to \td X$ from $y$ to $x_\alpha$.  We claim that $\gamma((0,1])$ does not intersect $V_\beta$.  Indeed, suppose that $z=\gamma(t)$ were contained in $V_\beta$.  Then $d(z,x_\beta) \leq d(z,x_\alpha)$.

Suppose first that $d(z,x_\beta)<d(z,x_\alpha)$.  But this would then mean that
\[d(y,x_\beta) \leq d(y,z)+d(z,x_\beta)<d(y,z)+d(z,x_\alpha)=d(y,x_\alpha),\]
which contradicts the fact that $y \in V_\alpha$.

Now suppose that $d(z,x_\beta)=d(z,x_\alpha)$.  But then concatenating $\gamma|_{[0,t]}$ with a minimal geodesic from $z$ to $x_\beta$ would give a shortest path from $y$ to $x_\beta$.  Since $\bb H^n$ is a complete Riemannian manifold, this shortest path is a geodesic.  But since $\gamma|_{[0,t]}$ is an initial segment of this geodesic, the geodesic is in fact $\gamma$ and $x_\alpha=x_\beta$, again a contradiction.

Since the sequence $\{\gamma(1/n)\}_{n \in \bb N}$ converges to $y$ and consists of points outside $V_\beta$, $y$ is in the boundary of $V_\beta$.
%
%
\end{proof}

\begin{defn}
A set $\Omega \subseteq \bb{H}^n$ is said to be \deffont{convex} if for any pair $x,y\in\Omega$, the unique geodesic connecting $x$ to $y$ is contained in $\Omega$.
\end{defn}

\begin{defn}
Let $p,q\in\bb{H}^n$. Then their \deffont{bisector} $B(p,q)$ is the set
\[B(p,q)=\bigl\{x\in\bb{H}^n:d(p,x)=d(q,x)\bigr\}.\]
\end{defn}

\begin{lem}\label{H^n, bisec=geo}
Let $p,q\in\bb{H}^n$. Then $B(p,q)$ is a totally geodesic subspace.
\end{lem}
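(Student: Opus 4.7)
The plan is to identify $B(p,q)$ as the fixed set of a single isometry of $\bb{H}^n$ and then invoke Lemma \ref{hyp fixed set}. Since $\bb{H}^n$ is a complete Riemannian manifold and hence a geodesic space, let $m$ denote the midpoint of the geodesic from $p$ to $q$. Using the transitivity of $\Isom(\bb{H}^n)$ recalled earlier in this subsection, define $\phi \in \Isom(\bb{H}^n)$ to be the unique isometry fixing $m$ whose derivative $d\phi_m$ is orthogonal reflection of $T_m\bb{H}^n$ across the hyperplane perpendicular to the tangent vector of the geodesic $\overline{pq}$ at $m$. Since isometries send geodesics to geodesics, $\phi$ preserves $\overline{pq}$ while reversing its orientation at $m$, so $\phi(p)=q$ and $\phi(q)=p$. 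Let $F$ be the fixed set of $\phi$; by Lemma \ref{hyp fixed set}, $F$ is a hyperbolic subspace, in particular totally geodesic. I claim $F=B(p,q)$, which finishes the proof. The inclusion $F\subseteq B(p,q)$ is immediate: if $\phi(x)=x$, then $d(x,p)=d(\phi(x),\phi(p))=d(x,q)$.

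For the reverse inclusion $B(p,q)\subseteq F$, I would work in the Poincar\'e disk model centered at $m$. In this model $m$ is the Euclidean origin, and because $m$ is the hyperbolic midpoint of $p$ and $q$, the two points appear as Euclidean antipodes, say $q=-p$. The isometry $\phi$ fixes $m$, its derivative there is an orthogonal reflection, and by uniqueness of isometries with a given fixed point and derivative it must coincide with the Euclidean orthogonal reflection across the Euclidean hyperplane $\{x:x\cdot p=0\}$---this reflection is itself a hyperbolic isometry since the Poincar\'e metric is rotationally invariant, and its derivative at the origin is the correct orthogonal reflection. Using the standard distance formula
\[\cosh d(x,y)=1+\frac{2|x-y|^2}{(1-|x|^2)(1-|y|^2)},\]
and the fact that $|p|=|q|$, the condition $d(x,p)=d(x,q)$ simplifies to $|x-p|=|x+p|$, which is equivalent to $x\cdot p=0$. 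This is exactly the Euclidean (hence hyperbolic) fixed set of $\phi$, so $B(p,q)\subseteq F$.

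The main obstacle is precisely this reverse inclusion, since a priori the metric equality $d(x,p)=d(x,q)$ does not force $x$ to be fixed by the specific $\phi$ we constructed. The disk-model calculation handles this cleanly once one recognizes that the model realizes $\phi$ as Euclidean orthogonal reflection. A more synthetic alternative would be to restrict attention to the totally geodesic $2$-plane spanned by $p$, $q$, and $x$ (if they are not collinear; otherwise $x=m$ trivially) and invoke the classical fact that in $\bb{H}^2$ the perpendicular bisector of a segment coincides with its metric bisector, so that $x$ lies on the intersection of $F$ with that $2$-plane.
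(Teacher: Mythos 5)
Your proof is correct, but it takes a genuinely different route from the paper's. The paper works in the upper half-space model: it normalizes $p$ and $q$ by an isometry so that they differ only in one horizontal coordinate, and then identifies $B(p,q)$ with the vertical hyperplane of constant $x_1$-coordinate, which is totally geodesic. You instead realize $B(p,q)$ as the fixed-point set of the reflection $\phi$ swapping $p$ and $q$, and then invoke Lemma \ref{hyp fixed set}. Your approach has two advantages: it reuses machinery the paper has already set up (the classification of fixed sets of isometries), and it is more careful about the reverse inclusion $B(p,q)\subseteq F$ --- the fact that \emph{every} equidistant point lies on the candidate hyperplane --- which the paper's one-line argument essentially asserts by symmetry without verification. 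Your disk-model computation ($d(x,p)=d(x,q)$ with $|p|=|q|$ reduces to $x\cdot p=0$) settles this cleanly, and the fallback via the totally geodesic $2$-plane through $p$, $q$, $x$ is also sound. The paper's proof is shorter but leans more heavily on the reader accepting the symmetry claim; yours is self-contained modulo Lemma \ref{hyp fixed set}. Both are valid.
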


\begin{proof}
We use the upper half-plane model of $\bb{H}^n$. We can always apply an elliptic isometry to rotate $p$ around $q$ so that they differ only in the $x_1$-coordinate. Then $B(p,q)$ is an $n-1$-dimensional hyperplane with constant $x_1$-coordinate, which is a totally geodesic subspace in $\bb{H}^n$. Since isometries preserve geodesics, this concludes the proof.
\end{proof}

In \cite{Beem}, it is shown that the converse of this statement is also essentially true: if the bisector of any pair of points in a Riemannian (or pseudo-Riemannian) manifold $M$ is totally geodesic, then $M$ has constant curvature.

The next result tells us that the Voronoi cells constructed above are convex when $\td{X}=\bb{H}^n$:

\begin{lem}
Suppose that $\{V_\alpha\}$ is the set of Voronoi cells generated by a discrete set of points $\{x_\alpha\}$ in $\bb{H}^n$.  Then any nonempty intersection of the $V_\alpha$ is a convex set, and therefore contractible.
\end{lem}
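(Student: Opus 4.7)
The plan is to write each Voronoi cell as an intersection of closed ``half-spaces,'' show each half-space is convex using Lemma \ref{H^n, bisec=geo}, and conclude that arbitrary intersections are convex and hence contractible.

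First I would observe that by definition
\[V_\alpha = \bigcap_{\beta \neq \alpha} H_{\alpha,\beta}, \qquad H_{\alpha,\beta} := \bigl\{y \in \bb H^n : d(y,x_\alpha) \leq d(y,x_\beta)\bigr\},\]
so any nonempty intersection $\bigcap_i V_{\alpha_i}$ is itself an intersection of sets of the form $H_{\alpha,\beta}$. Since the intersection of any family of convex sets is convex, it suffices to prove that each $H_{\alpha,\beta}$ is convex.

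Next, I would prove convexity of $H_{\alpha,\beta}$ using Lemma \ref{H^n, bisec=geo}, which states that the bisector $B(x_\alpha,x_\beta)$ is a totally geodesic hypersurface. Its complement in $\bb H^n$ has two components, namely the open half-spaces $\{y : d(y,x_\alpha) < d(y,x_\beta)\}$ and $\{y : d(y,x_\alpha) > d(y,x_\beta)\}$; this is immediate from the fact that $y \mapsto d(y,x_\alpha)-d(y,x_\beta)$ is continuous and vanishes exactly on the bisector. Now suppose $p,q \in H_{\alpha,\beta}$ and let $\gamma$ be the unique geodesic from $p$ to $q$. If $\gamma$ ever entered the open half-space $\{d(y,x_\alpha) > d(y,x_\beta)\}$, then by the intermediate value theorem it would have to cross $B(x_\alpha,x_\beta)$ at two distinct points $z_1,z_2$. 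But two distinct points of a totally geodesic subspace determine a unique geodesic contained in it, so $\gamma$ would lie in $B(x_\alpha,x_\beta) \subset H_{\alpha,\beta}$, a contradiction to $\gamma$ entering the opposite open half-space. Hence $\gamma \subset H_{\alpha,\beta}$, so $H_{\alpha,\beta}$ is convex.

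Finally, I would deduce contractibility. Pick any point $p_0$ in the nonempty intersection $C := \bigcap_i V_{\alpha_i}$. Since $\bb H^n$ is uniquely geodesic, the map $H : C \times [0,1] \to \bb H^n$ sending $(y,s)$ to the point at parameter $s$ along the unique geodesic from $p_0$ to $y$ is continuous; by convexity of $C$ its image lies in $C$, and it furnishes a deformation retraction of $C$ to $p_0$. Thus $C$ is contractible. The only nontrivial step is the convexity of a single half-space $H_{\alpha,\beta}$, and I expect this to be the main (though still mild) obstacle; everything else is formal.
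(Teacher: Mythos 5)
Your proposal is correct and follows essentially the same route as the paper: reduce to convexity of a single half-space $H_{\alpha,\beta}$, then use Lemma \ref{H^n, bisec=geo} on the totally geodesic bisector to rule out a geodesic between two points of $H_{\alpha,\beta}$ leaving it (the paper phrases the final contradiction via shortening the path through the bisector, you via uniqueness of geodesics — these are interchangeable). The explicit geodesic contraction to a point is a standard addition the paper leaves implicit.
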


\begin{proof}
    Since the intersection of convex sets is always convex, it suffices to prove that a single $V_\alpha$ is convex.  Now notice that
    \[V_\alpha=\bigcap_{\beta \neq \alpha} \{y \in \bb H^n: d(y,x_\alpha) \leq d(y,x_\beta)\}.\]
    Write $H_{\alpha,\beta}=\{y \in \bb H^n: d(y,x_\alpha) \leq d(y,x_\beta)\}$.  It suffices to prove that each $H_{\alpha,\beta}$ is convex.

    To do this, we use the convexity of $B(x_\alpha,x_\beta)=\partial H_{\alpha,\beta}$.  Suppose that the shortest path $\gamma:[0,1] \to \bb H^n$ between $y,z \in H_{\alpha,\beta}$ has a point outside of $H_{\alpha,\beta}$, say $\gamma(t)$.  Then there must be two points $s<t<s'$ such that $\gamma(s),\gamma(s') \in B(x_\alpha,x_\beta)$.  But then the shortest path between $\gamma(s)$ and $\gamma(s')$ lies in $B(x_\alpha,x_\beta)$.  Substituting this for the portion of $\gamma$ between $s$ and $s'$ creates a path between $y$ and $z$ which is shorter than $\gamma$, a contradiction.
\end{proof}

\subsection*{Hyperbolic Eden models}
We have shown that Voronoi cells of an orbit $\ms{O}_x$ of an action of $\Gamma$ on $\bb{H}^n$ are convex fundamental domains. The generalization of the Eden model on a tessellation of $\bb{H}^n$ is then clear: pick a tile to start with; then at every time step, randomly choose a tile adjacent to the infected region to infect next.  Here we say two tiles are \deffont{adjacent} if they share an $(n-1)$-dimensional face. We will use $A(t)$ to denote the Eden model at time $t$.

We can view each tile $V_\alpha$ as the vertex $\alpha$ of a graph $T$, and we stipulate that there is an edge from $\alpha$ to $\beta$ in $T$ if and only if 
\[\dim(V_\alpha\cap V_\beta)=n-1\]
in $\bb{H}^n$.
\begin{figure}
    \centering
    \includegraphics[scale=0.2]{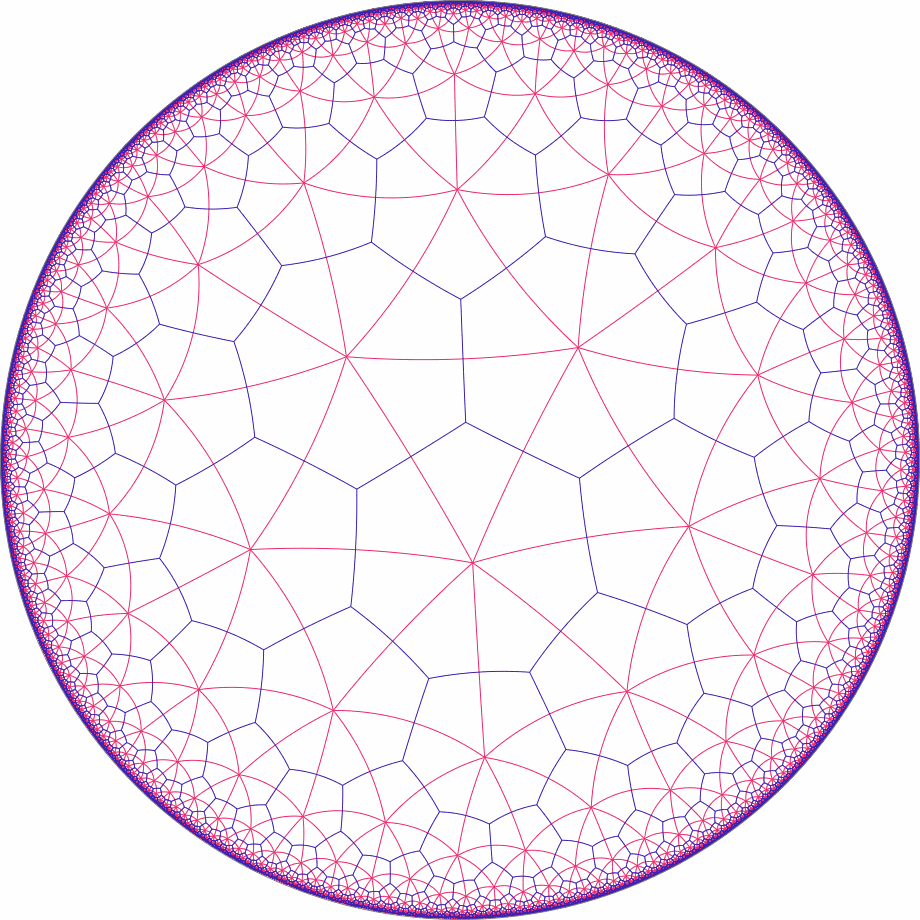}
    \caption{
    An example of a tessellation of $\bb H^n$ by Voronoi cells is the regular hyperbolic tiling of type $(n,m)$, which is a tiling where $m$ regular $n$-gons share a vertex. This image by \href{https://dmitrybrant.com/2007/01/24/hyperbolic-tessellations}{Dmitry Brant} shows a $(7,3)$ regular tiling in blue and its dual graph $T$ in red.
    }
    \label{hyptil}
\end{figure}

\begin{lem}\label{ball H^n intersect finite cells}
A closed ball in $\bb{H}^n$ intersects finitely many compact tiles of any tessellation.
\end{lem}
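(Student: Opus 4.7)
The plan is a volume-counting argument. First I would extract two uniform constants from the hypothesis that the tessellation uses only finitely many isometry classes of tiles: an upper bound $D$ on the diameter of any tile, and a lower bound $v > 0$ on the volume of any tile. The diameter bound is immediate since isometric tiles have equal diameter and only finitely many values occur. For the volume bound, note that every tile is compact with nonempty interior in $\bb H^n$, hence has positive volume; since isometries preserve volume and there are only finitely many isometry classes, the finite list of possible tile volumes has a positive minimum $v$.

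Next I would observe that if a tile $K_\alpha$ meets the closed ball $\cl B(p,r)$ at some point $q$, then every other point of $K_\alpha$ lies within distance $D$ of $q$, hence within distance $r+D$ of $p$. So every tile meeting $\cl B(p,r)$ is contained in the larger closed ball $\cl B(p, r+D)$, which is compact and therefore has finite volume.

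Finally, by the defining property of a tessellation, any point shared by two distinct tiles $K_\alpha$ and $K_\beta$ lies on the boundary of each, so the interiors of the tiles are pairwise disjoint. Summing volumes of the tiles that meet $\cl B(p,r)$, all of which are contained in $\cl B(p, r+D)$, gives a total of at most $\vol(\cl B(p, r+D))$. Since each such tile contributes at least $v$ to this sum, the number of tiles meeting $\cl B(p,r)$ is at most $\vol(\cl B(p, r+D))/v$, which is finite.

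There is no serious obstacle; the only point that requires the hypothesis in an essential way is the passage from ``finitely many isometry classes'' to uniform diameter and volume bounds, and this is immediate because isometries preserve both quantities.
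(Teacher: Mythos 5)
Your proof is correct and is essentially the same volume-counting argument as in the paper: both bound tile diameters above and tile volumes below using the finitely many isometry classes, enclose all tiles meeting $\cl B(p,r)$ in the compact ball $\cl B(p,r+D)$, and divide its volume by the minimal tile volume. You simply spell out a few steps (disjointness of interiors, positivity of the minimal volume) that the paper leaves implicit.
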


\begin{proof}
Let $d$ denote the maximum diameter of all the tiles, since there are finitely many isometry classes.  Similarly, let $v$ denote the minimal volume of a tile.  If a closed ball $B(y,r)$ intersects a tile $V$, then the ball $B(y+d,r)$ contains $V$.  Therefore the ball $B(y,r)$ contains at most $v^{-1}\vol B(y+d,r)$ tiles.
\end{proof}

We will also use the following well-known result to prove that $T$ is a Cayley graph for $\Gamma$:

\begin{lem}[Thom's transversality theorem] \label{thom transv}
Let $P$ be a smooth manifold and $M$ be a smooth submanifold of manifold $N$. Let $C^\infty_{\pitchfork_M}(P,N)$ denote the set of all smooth maps $P\to N$ that are transversal to $M$. Then $C^\infty_{\pitchfork_M}(P,N)$ is a dense subspace of $C(P,N)$ with the compact-open topology.
\end{lem}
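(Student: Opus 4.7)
The plan is to reduce Thom's transversality theorem to a parametrized version that follows from Sard's theorem, and then globalize using a locally finite cover of $P$ by compact sets. The guiding principle is that transversality is a generic condition whenever we have access to a sufficiently rich family of perturbations.

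First I would establish the \emph{parametrized transversality lemma}: if $S$ is a smooth manifold and $F: P \times S \to N$ is smooth and transverse to $M$, then for almost every $s \in S$ the restriction $F_s: P \to N$, $p \mapsto F(p,s)$, is transverse to $M$. This is proved by noting that transversality of $F$ forces $W = F^{-1}(M) \subset P \times S$ to be a smooth submanifold, and then applying Sard's theorem to the projection $\pi: W \to S$: a parameter $s$ is a regular value of $\pi$ exactly when $F_s$ is transverse to $M$, so the bad set has measure zero and in particular is not dense.

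Next I would build a \emph{local perturbation} around any point $p_0 \in P$. Work in a chart near $f(p_0)$ in $N$, choose a bump function $\phi: P \to [0,1]$ equal to $1$ on a compact neighborhood $K$ of $p_0$ and supported in a set mapped by $f$ into the chart, and consider the family $F(x,s) = f(x) + \phi(x) s$ (computed in local coordinates) for $s$ in a small ball $B \subset \mathbb{R}^{\dim N}$. Where $\phi \equiv 1$ the map $F$ is a submersion in $s$, hence automatically transverse to $M$, so the parametrized lemma produces arbitrarily small $s$ with $F_s$ transverse to $M$ throughout the interior of $K$.

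The final step is globalization. Write $P$ as the union of a countable locally finite family of compacts $\{K_i\}$, each sitting inside the $\phi \equiv 1$ region of such a perturbation family, and inductively define smooth maps $f_i$ with $f_0 = f$ so that $f_i$ is transverse to $M$ on $K_1 \cup \cdots \cup K_i$ and differs from $f_{i-1}$ only in a neighborhood of $K_i$. Using the fact that transversality on a fixed compact set is an open condition in the compact-open topology, each perturbation can be chosen small enough to preserve transversality on the previously treated compacts and to fit within a prescribed geometrically shrinking tolerance. Local finiteness of the cover guarantees that the sequence $f_i$ eventually stabilizes on every compact subset of $P$, and the limit is a smooth map transverse to $M$ lying in any prescribed compact-open neighborhood of $f$. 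The main obstacle is precisely this bookkeeping in the globalization step: balancing the requirement that each perturbation be large enough to secure transversality on the new piece against the need to keep the total deformation uniformly small and to not destroy the transversality already achieved.
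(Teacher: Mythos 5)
The paper does not actually prove this lemma: it is quoted as a classical result with references to Thom's original paper and to Kosinski's book, so there is no in-paper argument to compare against. Your proposal is the standard textbook proof (parametric transversality via Sard's theorem, local bump-function perturbations, globalization over a locally finite exhaustion), and in outline it is the right argument.

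Two points need repair. First, your globalization step rests on the claim that ``transversality on a fixed compact set is an open condition in the compact-open topology.'' This is false in the $C^0$ compact-open topology: a uniformly small perturbation can wreck derivatives and hence destroy transversality. Transversality along a compact set is open only in the $C^1$ (weak or strong) topology. Your induction is still salvageable, because your perturbations have the specific form $f_{i-1}+\phi_i(x)\,s_i$ with $\phi_i$ fixed, so they are $C^1$-small (indeed $C^\infty$-small) as $s_i\to 0$; but you must say this explicitly, since the $C^0$-smallness you invoke is not enough to preserve transversality on the previously treated compacts. Note that $C^0$-smallness \emph{is} all that is needed for the final density claim, so the distinction only matters inside the induction. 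Second, the statement asserts density in $C(P,N)$, the space of \emph{continuous} maps, whereas your construction starts from a smooth $f$ (you need $f$ smooth for $F(x,s)=f(x)+\phi(x)s$ to be a smooth family). You should first invoke Whitney/smooth approximation to replace a continuous map by a nearby smooth one, and only then run the transversality perturbation. A smaller quibble: your local family $F$ is only a submersion (hence transverse to $M$) where $\phi\equiv 1$, so the parametric lemma must be applied to the restriction of $F$ to $\operatorname{int}(K)\times B$; you draw exactly the conclusion that this restricted application yields, so this is a matter of phrasing rather than substance.
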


This result was originally obtained in \cite{Thom1954}. The proof of a slightly weaker version can be found in \cite[chapter IV.2]{kosinski1992}.

\begin{lem} \label{lem:Cayley}
  Suppose that $\Gamma$ acts transitively on the tiles of a tessellation by convex, compact tiles.  Then the graph $T$ is a Cayley graph for $\Gamma$.
\end{lem}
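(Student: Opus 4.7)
The plan is to verify the hypotheses of Lemma \ref{recognizeCayley} and conclude from it. I need to show that $T$ is connected and locally finite with $|T|>1$, that $\Gamma$ acts on $T$ by graph isomorphisms, and that this action is free and transitive.

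The routine verifications proceed as follows. Since $\bb H^n$ is noncompact and each tile is compact, $T$ has infinitely many vertices. Local finiteness follows from Lemma \ref{ball H^n intersect finite cells}: every neighbor of $V_\alpha$ lies inside the ball of radius $2\,\diam(V_\alpha)$ centered at any point of $V_\alpha$, and such a ball meets only finitely many tiles. Each $g\in\Gamma$ is an isometry of $\bb H^n$ and hence preserves the dimensions of intersections $V_\alpha\cap V_\beta$, so the induced permutation of tiles is a graph automorphism of $T$; transitivity is the hypothesis. For freeness I would use the Voronoi construction specific to this section: the orbit $\ms O_x$ was built from a base point $x$ with trivial stabilizer (Lemma \ref{trivialstab}), and each Voronoi cell $V_\alpha$ contains a unique orbit point $x_\alpha$. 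Any $g\in\Gamma$ fixing $V_\alpha$ setwise must send $x_\alpha$ to itself and therefore lies in the stabilizer of $x_\alpha$, which is trivial.

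The main obstacle is connectedness of $T$, which I would establish using Thom's transversality theorem (Lemma \ref{thom transv}). Given two tiles $V_\alpha$ and $V_\beta$, pick interior points $p\in V_\alpha$ and $q\in V_\beta$ and join them by a smooth path $\gamma_0\colon[0,1]\to\bb H^n$. The locus to avoid is the $(n-2)$-skeleton of the tessellation, where three or more cells meet: for Voronoi cells this is contained in a countable family of pairwise intersections of bisectors $B(x_{\alpha_i},x_{\alpha_j})$, each of which, by Lemma \ref{H^n, bisec=geo}, is a totally geodesic submanifold of codimension at least $2$. By Lemma \ref{ball H^n intersect finite cells}, a bounded tubular neighborhood of the compact image of $\gamma_0$ meets only finitely many tiles, so only finitely many such submanifolds are relevant. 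Iteratively applying Lemma \ref{thom transv}, I can perturb $\gamma_0$ within this tube to a smooth path $\gamma$ transverse to each of these submanifolds; since a curve has dimension $1$ and the submanifolds have codimension $\geq 2$, transversality forces $\gamma$ to miss them entirely. Then $\gamma$ crosses only codimension-$1$ faces, each of which is shared by exactly two adjacent tiles, so the sequence of tiles traversed by $\gamma$ forms an edge path from $V_\alpha$ to $V_\beta$ in $T$. Having verified all the hypotheses, Lemma \ref{recognizeCayley} then yields the conclusion.
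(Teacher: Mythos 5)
Your proposal is correct and follows essentially the same route as the paper's proof: reduce to Lemma \ref{recognizeCayley}, get local finiteness from Lemma \ref{ball H^n intersect finite cells}, deduce freeness from the trivial stabilizer of the Voronoi base point and transitivity from the hypothesis, and establish connectedness by perturbing a path via Thom transversality so that it misses the codimension-$\geq 2$ strata and therefore only crosses codimension-one faces. The only cosmetic difference is that you describe the bad locus via intersections of bisectors while the paper simply encloses each low-dimensional intersection in an $(n-2)$-dimensional totally geodesic subspace; the argument is the same.
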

In particular, this means that $T$ is infinite, locally finite, connected, and vertex-transitive.
\begin{proof}
  By Lemma \ref{recognizeCayley}, it suffices to prove that $T$ is locally finite and connected and $\Gamma$ acts freely and transitively on $T$ by graph isomorphisms.

  The action of $\Gamma$ on $T$ is induced by an action by isometries on $\bb H^n$, which takes $(n-1)$-dimensional intersections to $(n-1)$-dimensional intersections.  Therefore $\Gamma$ acts by graph homomorphisms.  Moreover, the action is free since we chose $x$ to have trivial stabilizer and transitive since the centers of cells are exactly the orbit of $x$ under $\Gamma$.

  By Lemma \ref{ball H^n intersect finite cells}, there are finitely many tiles that intersect any closed ball, hence $T$ is locally finite.
  
  To show that $T$ is connected, let us pick $\alpha,\beta\in T$. We want to show that there is a path in $T$ that starts at $\alpha$ and ends at $\beta$. This amounts to showing that in $\mathbb{H}^n$, we can find a path starting in $V_\alpha$ and ending in $V_\beta$ that only travels between adjacent tiles, i.e., it avoids any intersections of dimension $\leq n-2$. Since $\mathbb{H}^n$ is path-connected, we can certainly find a smooth path $\gamma:[0,1] \to \bb H^n$ starting in $V_\alpha$ and ending in $V_\beta$. Take an open ball $B$ that contains $V_\alpha$, $V_\beta$, and the image of $\gamma$. By Lemma \ref{ball H^n intersect finite cells}, there are finitely many intersections of dimension $\leq n-2$ inside $B$, each of which is contained in a hyperplane $\mc{H}^{n-2}$ of dimension $n-2$.
  
  Now we claim that we can perturb the path $\gamma$ to a new path $\gamma'$ in $B$ which is transversal to the finitely many hyperplanes of dimension $n-2$ mentioned above, and whose endpoints still lie in $V_\alpha$ and $V_\beta$. Since for every point $u\in \gamma'\cap\mc{H}^{n-2}$, we have
  \[\dim([0,1])+\dim \mc{H}^{n-2}=n-1<n=\dim \bb{H}^n,\]
  transversality implies that $\gamma'\cap\mc{H}^{n-2}=\emptyset$, so $\gamma'$ is the path that we want. Let
  \[\epsi=\min\{d(\gamma(0),\partial V_\alpha),d(\gamma(1),\partial V_\beta),d(\gamma([0,1]),\partial B)\}.\]
  By Lemma \ref{thom transv}, we may perturb $\gamma$ by less than $\epsi/2$ to a new path $\gamma^{(1)}$ such that it is transversal to the first hyperplane, i.e.\ $\gamma^{(1)}\pitchfork \mc{H}^{n-2}_{1}$. Then, inductively, at $j$-th step, let $d_j=\min\{d(\gamma^{(j-1)},\mc{H}^{n-2}_1),...,d(\gamma^{(j-1)},\mc{H}^{n-2}_{j-1})\}$. We perturb $\gamma^{(j-1)}$ by less than $\min(d_j,\epsi/2^{j})$ such that $\gamma^{(j)}\pitchfork\mc{H}^{n-2}_j$. Note that we can perturb $\gamma^{(j-1)}$ in such that way that it stays transversal to all the previous hyperplanes, i.e., $\gamma^{(j)}\pitchfork\mc{H}^{n-2}_k$ for all $k\leq j$. Since there are only finitely many such hyperplanes, say $N$ of them, this process must terminate. In the end, we will obtain a path $\gamma'$ such that $\gamma'\pitchfork \mc{H}^{n-2}_k$ for all $1\leq k\leq N$, and we can control the total perturbation by
  \[d(\gamma,\gamma')<\epsi\qty(\frac{1}{2}+\frac{1}{4}+\frac{1}{8}+\cdots+\frac{1}{2^N})<\epsi.\]
  Hence the starting point of $\gamma'$ is in $V_\alpha$ and the ending point of $\gamma'$ is in $V_\beta$, as desired.  Moreover, since the image of $\gamma'$ still lies in $B$, it does not hit any new $(n-2)$-dimensional intersections.
\end{proof}
By the Milnor--Schwarz lemma, it follows that the map $T \to \bb H^n$ which sends each $\alpha \mapsto x_\alpha$ is a quasi-isometry.  Moreover, since $\Gamma$ acts on hyperbolic space, it is non-amenable by Example \ref{IP examples}(c), that is, its isoperimetric profile function is bounded below by a linear function.

The main result of this section is:

\begin{thm}\label{H^n betti bound}
Let $A(t)$ be the Eden model at time $t$ on a Voronoi tessellation of $\bb{H}^n$ whose centers are an orbit of the action of a lattice $\Gamma$. Let $1\leq k\leq n-1$ and let $\beta_k(t)$ denote the $k$th Betti number of $A(t)$, i.e. $\beta_k(t)=\rank H_k(A(t))$. Then there are constants $C>c>0$ depending on $k$ and the tessellation such that
\[c|A(t)| \leq \beta_k(t) \leq C|A(t)|\]
with high probability as $t\to\infty$.
\end{thm}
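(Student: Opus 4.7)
The plan is to prove the two bounds separately. For the upper bound $\beta_k(t)\le C|A(t)|$, I would observe that since the tessellation is regular (all tiles are $\Gamma$-translates of a fixed convex polytope), each tile contributes a uniformly bounded number of $k$-dimensional faces to the CW structure of $A(t)$ inherited from the tessellation. Hence $\beta_k(t)\le\dim C_k(A(t))\le M_k|A(t)|$ for a constant $M_k$ depending only on the tessellation.

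For the lower bound, my plan is to combine Theorem \ref{generalized theorem 8} with the construction of a suitable local test configuration. For each $k\in\{1,\ldots,n-1\}$, I will build a connected subgraph $S_k\subset T$ sitting inside some $R$-ball $B_R(x_0)$ (with $R=R(k)$), containing the outer layer $D_R(x_0)$, such that the union of tiles $\bigcup_{\alpha\in S_k}V_\alpha\subset\bb H^n$ carries a cellular $k$-cycle $\sigma_k$ that is filled by a $(k+1)$-chain $D_k$ supported \emph{entirely} on the complementary tiles in $B_R(x_0)\setminus S_k$. Concretely, for $R$ large enough one can take the collection of tiles meeting a topologically embedded round $k$-sphere in $\bb H^n$ together with a tile-path connecting this collar to $D_R(x_0)$, and let $S_k$ be the union of these with $D_R(x_0)$; the cycle $\sigma_k$ is carried by the $k$-sphere and $D_k$ is given by the ``interior'' tiles of the collar. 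Applying Theorem \ref{generalized theorem 8} and Remark \ref{rmk:strengthening}, and using that $\Gamma$ is non-amenable (by Example \ref{IP examples}(c), since $\bb H^n$ has negative sectional curvature) so that $F(N)\ge c_0 N$, I obtain with high probability at least $c_1|A(t)|$ disjoint $R$-balls $B_R(v_j)$ with $A(t)\cap B_R(v_j)=\phi_{v_j}(S_k)$, producing pairwise disjoint cycles $\sigma_j:=\phi_{v_j}(\sigma_k)$ in $A(t)$.

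The main obstacle will be establishing linear independence of the $[\sigma_j]$ in $H_k(A(t))$. I plan to handle this by passing to nerves: because any nonempty intersection of Voronoi cells is convex and hence contractible (by the lemma proved earlier in the section), the nerve lemma gives $A(t)\simeq\mc N(t)$ and $\bb H^n\simeq\mc N$, with $\mc N$ contractible. The long exact sequence of the pair $(\mc N,\mc N(t))$ then collapses to an isomorphism
\[H_k(\mc N(t))\;\cong\;H_{k+1}(\mc N,\mc N(t))\qquad(1\le k\le n-1),\]
sending $[\sigma_j]$ to $[D_j]:=[\phi_{v_j}(D_k)]$. By invoking Theorem \ref{generalized theorem 8} with radius $R+\rho$ for a suitable buffer $\rho$ depending only on the tessellation, I can arrange that the complementary sets $V_j:=B_R(v_j)\setminus A(t)$ are pairwise \emph{simplicially separated} in $\mc N$, i.e.\ no simplex of $\mc N$ has vertices in two distinct $V_j$. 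The quotient chain complex $C_*(\mc N)/C_*(\mc N(t))$ then splits as a direct sum over $j$ of subcomplexes localized near $V_j$, plus a remainder. Each relative class $[D_j]$ lies in its own summand and is nonzero there (by the local long exact sequence applied to the contractible $\mc N(B_R(v_j))$ and its $H_k$-nontrivial subcomplex $\mc N(\phi_{v_j}(S_k))$), so the $[D_j]$ are linearly independent in $H_{k+1}(\mc N,\mc N(t))$. This yields $\beta_k(t)\ge c|A(t)|$ and completes the proof.
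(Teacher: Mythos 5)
Your upper bound is essentially the paper's argument in different clothing (the paper counts simplices of the nerve of the good cover by tiles rather than cells of the polyhedral CW structure; both give $\beta_k\le M_k|A(t)|$ for every union of $|A(t)|$ tiles). Your globalization of the lower bound, however, is a genuinely different route. The paper decomposes $A(t)=\operatorname{int}M\cup\operatorname{int}N$ and applies Mayer--Vietoris, using that $M\cap N$ is a disjoint union of $(n-1)$-spheres so that $\bigoplus_\alpha H_k(N^-_\alpha)$ injects into $H_k(A(t))$ for $k\le n-2$, with a separate argument for $k=n-1$; you instead work in the contractible ambient nerve $\mc N$, identify $H_k(\mc N(t))$ with $H_{k+1}(\mc N,\mc N(t))$, and split the relative chain complex. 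This is sound in outline and has the advantage of treating $k=n-1$ uniformly. Two remarks on it: you do not actually need $\mc N(B_R(v_j))$ to be contractible (which is not obvious for graph-metric balls), since $\partial[D_j]=[\sigma_j]$ already forces $[D_j]\ne 0$ once $[\sigma_j]\ne0$ in $H_k(\mc N(\phi_{v_j}(S_k)))$; and the separation really does require your buffer, because two tiles can meet in a face of dimension $\le n-2$ while being non-adjacent in $T$, so the single outer layer $D_R$ does not enclose $V_j$ in the nerve --- the buffer $\rho$ must exceed the (finite, by quasi-isometry) maximal graph distance between intersecting tiles.

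The genuine gap is in the local construction, which is precisely where the paper invests an entire section. Everything in your scheme reduces to the claim that $[\sigma_j]\ne 0$ in $H_k(\mc N(\phi_{v_j}(S_k)))$, i.e.\ that the $k$-sphere does not bound \emph{inside the union of tiles of $S_k$ itself}; you assert this ("$H_k$-nontrivial subcomplex") without argument, and for your choice of $S_k$ it is not automatic. The union of tiles meeting the round $k$-sphere is homotopy equivalent to $S^k$ and carries the class, but you then adjoin a connecting path and a (thickened) outer shell, and you must rule out a filling of $\sigma_k$ using those extra tiles. Moreover, the obvious external certificate fails: the uninfected region $V_j$ in your construction is a neighborhood of the shrunk spanning $(k+1)$-disk, a compact contractible set, so Alexander duality gives $\tilde H_k(\bb H^n\setminus Y)\cong\tilde H^{n-k-1}(Y)=0$ for every such $Y\subseteq V_j$ when $1\le k\le n-2$ --- a round $k$-sphere links nothing in the complement of the disk it bounds, so nontriviality cannot be detected by mapping $\langle S_k\rangle$ into $\bb H^n\setminus Y$. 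A repair is possible (e.g.\ a Mayer--Vietoris argument showing the attachment loci of the path and shell to the collar have vanishing $H_k$, so $H_k(\mathrm{collar})\to H_k(\langle S_k\rangle)$ is injective), and you must also verify connectedness of $\mc S_k$ and the outer-layer condition, which the paper does via a transversality argument (Lemma \ref{subgraph satisfies conditions}). The paper avoids the linking obstruction by using a different handle: its nontrivial cycle is not a round sphere but the union of a $k$-disk in $\cl{P^k}$ with a $k$-chain in the bounding $(n-1)$-sphere, which links a compact $(n-k-1)$-sphere $\Omega$ sitting in the uninfected region. This verification is the technical core of the theorem and is missing from your proposal.
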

\begin{rmk}
  In the proof we use only that we have a regular tessellation of $\bb H^n$ by convex compact tiles.  Thus we prove the slightly more general statement in Theorem \ref{hyperbolic intro}.
\end{rmk}

Here is a rough sketch of the proof.  By Theorem \ref{generalized theorem 8}, there is a constant $C(R,d)>0$ (where $d$ is the degree of the graph $T$ constructed above) so that for any connected subgraph $\mc{S}$ that satisfies certain hypotheses, the intersection of $A(t)$ with at least $C|A(t)|$ disjoint $R$-balls is isomorphic to $\mc{S}$, with high probability as $t\to\infty$.  Each such subgraph $\mc{S}$ will correspond to a union of Voronoi cells in $\bb{H}^n$, which we denote by $S$.  This means that we can find at least $C|A(t)|$ copies of $S$ along the periphery of the Eden model $A(t)$ at time $t$. Then to prove Theorem \ref{H^n betti bound}, it suffices to construct, for each $1 \leq k \leq n-1$, an $S$ such that each copy of $S$ contributes at least $1$ to the $k$th Betti number $\beta_k$.  The next section will be devoted to the construction of such an $S$. 

There is one subtlety.  Theorem \ref{generalized theorem 8} lets us find many copies of the subgraph $\mc S$ in $T$, and moreover (as shown in the proof) there is a graph automorphism of $T$ sending any of these subgraphs to any other.  But in $\bb{H}^n$, we will need copies of $S$ (as geometric realization in $\bb{H}^n$) to be homeomorphic, which cannot be guaranteed by the existence of such a graph automorphism unless the automorphism is induced by the group action of $G$.  Therefore we must use the modification of Theorem \ref{generalized theorem 8} discussed in Remark \ref{rmk:strengthening}.

\subsection*{Upper bound}
The upper bound of Theorem \ref{H^n betti bound} is essentially trivial, in the sense that it holds for every union of tiles, not just those generated by the Eden model.  Consider any union $U$ of $m$ tiles of our tessellation.  The set of tiles forms what is known as a \deffont{good cover} of $U$: each tile as well as any nonempty intersection of tiles is contractible, because our tiles are convex.  This means that the homology of $U$ is the same as the homology of the nerve $N$ of the good cover, since by Leray's nerve theorem the nerve of the good cover $N$ is homotopy equivalent to $U$.  Every tile intersects the same finite number of other tiles, say $r$.  Therefore, for every $k$, the number of $k$-simplices of $N$ is at most ${r \choose k+1}m$.  This is an upper bound for the rank of $H_k(U)$.

\section{Constructing hyperbolic handles}
We now construct $S$ with the properties described above for $1 \leq k \leq n-2$. The case $k = n-1$ involves a simpler $S$, so we address it separately in the proof below. 

We use the Poincar\'e disc model of $\bb{H}^n$. In this model, $\bb{H}^n$ is identified with the open unit disc in $\bb{R}^n$, with metric
\[\dif s^2=\frac{4(\dif x_1^2+\cdots+\dif x_n^2)}{(1-(x_1^2+\cdots+x_n^2))^2}.\]
We first construct a copy of $S$ at the origin. Then we can use a deck transformation to shift $S$.
\subsection*{Notation}
We consider two different metric spaces with distinct, if related, notions of distance.  The first is $\bb H^n$ with the hyperbolic metric $d$ induced by the metric $\dif s^2$.  The second is the graph $T$ induced by the tessellation, equipped with the graph metric $d_T$.  For the rest this section, we establish the following notational conventions:
\begin{itemize}
    \item Any subgraph of the graph $T$ will be denoted by calligraphic letters.  For example, the subgraph of $T$ associated with the hyperbolic handle will be denoted by $\mc{S}$. A closed ball of radius $R$ in the graph metric $d_T$ centered at $x$ will be denoted by $\mc{B}_R (x)$.
    \item For any subgraph $\mc A$ of the graph $T$, we denote the union of the corresponding tiles in $\bb H^n$ by $\langle \mc A \rangle$.
    \item A subset of the hyperbolic space $\bb{H}^n$ will be denoted by ordinary italic letters.  The hyperbolic handle viewed as a subset of $\bb{H}^n$ (a union of tiles) will be denoted by $S$. A closed ball of radius $R$ in the hyperbolic metric (which we denote by $d$ or $d_{\bb{H}^n}$) centered at $x$ will be denoted by $B_R(x)$, and its boundary will be denoted by $\del B_R(x)=S_R(x)$.
\end{itemize}

\subsection*{Construction at the origin}
We assume without loss of generality that the origin is the center of one of our tiles.

Fix $1\leq k\leq n-2$. Choose a $k$-dimensional plane $P^k$ through the origin in $\bb R^n$ and let $P^{n-k}$ be the plane through the origin perpendicular to $P^k$.  These two planes intersect the Poincar\'e disc $\bb{H}^n$. Denote the union of all tiles in $\bb{H}^n$ that intersect $P^k$ by $\cl{P^k}$.  Denote the union of all tiles in $\bb{H}^n$ that intersect $P^{n-k}$ but do not intersect $\cl{P^k}$ by $\cl{P^{n-k}}$. We claim:

\begin{lem} \label{hyperbolic divergence}
There exists $R_1>0$ such that for all vertices $\alpha$ of $T$ satisfying $d_T(\alpha,0)\geq R_1$, if the tile $V_\alpha$ intersects $P^{n-k}$, then $V_\alpha \in \cl{P^{n-k}}$.
\end{lem}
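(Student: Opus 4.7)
The plan is to use the fact that in hyperbolic space, two totally geodesic subspaces meeting perpendicularly at a single point diverge. Concretely, for any $y \in P^{n-k}$ one has $d(y, P^k) = d(y, 0)$: if $z \in P^k$, the triangle with vertices $0, y, z$ has a right angle at $0$, since the planes are Euclidean-perpendicular at the origin and the Poincar\'e metric is conformally Euclidean there. The hyperbolic Pythagorean theorem then gives $\cosh d(y,z) = \cosh d(y,0)\cosh d(z,0) \geq \cosh d(y,0)$, with equality iff $z=0$.

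Let $D$ be the maximum diameter of a tile, which is finite since the tessellation uses only finitely many isometry types. The next step is to show that any point $w \in \bb{H}^n$ lying within hyperbolic distance $D$ of both $P^k$ and $P^{n-k}$ satisfies $d(w, 0) \leq 3D$: pick $y \in P^{n-k}$ and $z \in P^k$ with $d(w,y), d(w,z) \leq D$; then $d(y,z) \leq 2D$, the divergence step of the previous paragraph gives $d(y,0) \leq 2D$, and the triangle inequality yields $d(w,0) \leq 3D$.

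To finish, I would invoke Lemma~\ref{lem:Cayley} together with the Milnor--Schwarz lemma: the map $\alpha \mapsto x_\alpha$ is a quasi-isometry $T \to \bb{H}^n$, so $d_T(\alpha,0) \to \infty$ forces $d(x_\alpha, 0) \to \infty$. Choose $R_1$ large enough that $d_T(\alpha,0) \geq R_1$ implies $d(x_\alpha, 0) > 4D$. Assume $V_\alpha$ meets $P^{n-k}$ at a point $y$, and suppose for contradiction that $V_\alpha \cap \cl{P^k} \neq \emptyset$, so some tile $V_\beta$ with $V_\beta \cap P^k \neq \emptyset$ meets $V_\alpha$ at a point $w$. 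Because $V_\alpha$ and $V_\beta$ each have diameter at most $D$, $w$ lies within $D$ of both $P^{n-k}$ (via $y$) and $P^k$ (via any point of $V_\beta \cap P^k$); the bound of the previous paragraph gives $d(w,0) \leq 3D$. On the other hand $d(w, x_\alpha) \leq D$ together with $d(x_\alpha,0) > 4D$ yield $d(w,0) > 3D$, a contradiction.

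The key technical input is the divergence of perpendicular totally geodesic subspaces in hyperbolic space, which fails in $\bb{R}^n$ and so really distinguishes the hyperbolic setting from the Euclidean one treated in \cite{MRS}. Once that is in hand, everything else --- matching the graph-metric hypothesis to a hyperbolic-metric conclusion via Milnor--Schwarz, and absorbing the tile diameter as a constant --- is bookkeeping.
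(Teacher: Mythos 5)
Your proof is correct, and it follows the same overall skeleton as the paper's: show the two perpendicular totally geodesic subspaces diverge, absorb the uniform tile diameter, and convert the graph-metric hypothesis into a hyperbolic-metric one via Milnor--Schwarz. The difference lies in how divergence is established. The paper plugs two orthogonal vectors into the explicit Poincar\'e-disc distance formula and argues qualitatively that $d(u,v)\to\infty$ as $\norm{u},\norm{v}\to 1$, extracting an unspecified $R_0$; you instead use the hyperbolic Pythagorean theorem to get the sharper quantitative statement $d(y,P^k)=d(y,0)$ for $y\in P^{n-k}$, which lets you write down explicit constants ($3D$, $4D$). Your route is in fact the one the paper itself adopts later, in Corollary \ref{NPC divergence}, when it generalizes this lemma to non-positively curved manifolds via the law of cosines --- so your argument has the added virtue of porting directly to that setting. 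You are also more careful than the paper on one point: membership in $\cl{P^{n-k}}$ requires $V_\alpha$ to miss the \emph{thickened} set $\cl{P^k}$ (a union of tiles), not just the subspace $P^k$, and your contradiction argument with the intermediate tile $V_\beta$ handles this extra tile-diameter explicitly, whereas the paper's proof as written only concludes that $V_\alpha$ misses $P^k$ itself.
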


\begin{proof}
Let $u,v\in\bb{R}^n$ be two vectors in $P^k$ and $P^{n-k}$ respectively with Euclidean norm less than 1, so that they live in the Poincar\'e disc $\bb{H}^n$. Their distance in $\bb{H}^n$ is given by
\[d_{\bb{H}^n}(u,v)=2\ln\qty(\frac{\norm{u-v}+\sqrt{\norm{u}^2\norm{v}^2-2\expval{u,v}+1}}{\sqrt{(1-\norm{u}^2)(1-\norm{v}^2)}}).\]
Since $\langle u,v \rangle=0$, as $\norm{u},\norm{v}\to 1$, we have $d(u,v)\to\infty$.  Therefore as $R \to \infty$, the hyperbolic distance between $P^k \setminus B_R(0)$ and $P^{n-k} \setminus B_R(0)$ also goes to infinity.  Since every Voronoi cell has bounded diameter, there is an $R_0>0$ such that whenever $V_{\alpha}$ contains a point with hyperbolic distance at least $R_{0}$ from $0$ and $V_\alpha$ intersects $P^{n-k}$, $V_\alpha$ does not intersect $P^k$.  The Milnor--Schwarz lemma gives us an quasi-isometry between $T$ and $\bb{H}^{n}$, so taking the constants $C > 1, K > 0$ from the quasi-isometry we can set $R_1=CR_0+K$.
\end{proof}

Consider the ball $\mc B_{R_1}(0)$ in the graph metric.  By the proof of the lemma, the union of tiles $\langle \mc{B}_{R_1}(0) \rangle \cap (\cl{P^k})^c$ contains the $(n-k-1)$-sphere $\Omega$ of radius $R_0$ (in the hyperbolic metric) inside $P^{n-k}$.

\begin{lem}\label{layers of balls}
Let $\epsi>0$ be given. There are constants $R_2,R_3>0$ such that
\[N_\epsi(S_{R_2}(0))\subset \langle \mc{B}_{R_3}(0) \setminus \mc{B}_{R_1}(0) \rangle\]
where $N_\epsi(K)$ denotes the closed $\epsi$-neighborhood of $K$.
\end{lem}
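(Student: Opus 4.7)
The strategy is to translate hyperbolic-distance bounds on points of $\bb{H}^n$ into graph-distance bounds on the tiles containing them, using two ingredients established earlier: the Milnor--Schwarz quasi-isometry $\alpha \mapsto x_\alpha$ from $T$ to $\bb{H}^n$, and the uniform bound on the hyperbolic diameter of the Voronoi tiles. Let $(C,K)$ be constants such that this map is a $(C,K)$-quasi-isometry, and let $D>0$ be such that $\diam(V_\alpha) \leq 2D$ for every tile. Then whenever $p \in V_\alpha$, the triangle inequality gives $|d_{\bb{H}^n}(0,p) - d_{\bb{H}^n}(0,x_\alpha)| \leq 2D$, which combined with the quasi-isometry inequality yields the two-sided bound
\[\tfrac{1}{C}d_{\bb{H}^n}(0,p) - K - \tfrac{2D}{C} \leq d_T(\alpha,0) \leq C\, d_{\bb{H}^n}(0,p) + K + 2CD.\]

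For the choice of $R_2$, note that any $p \in N_\epsi(S_{R_2}(0))$ satisfies $R_2 - \epsi \leq d_{\bb{H}^n}(0,p) \leq R_2+\epsi$. I will choose $R_2$ large enough that the lower bound in the displayed inequality forces $d_T(\alpha,0) > R_1$ for any tile containing such a $p$; concretely, any
\[R_2 > C(R_1+K) + 2D + \epsi\]
works. Once $R_2$ is fixed, the upper bound in the displayed inequality gives $d_T(\alpha,0) \leq C(R_2+\epsi) + K + 2CD$ for any tile meeting $N_\epsi(S_{R_2}(0))$, so I will take $R_3 = \lceil C(R_2+\epsi) + K + 2CD \rceil$. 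Since the tiles cover $\bb{H}^n$, every point of $N_\epsi(S_{R_2}(0))$ lies in some tile $V_\alpha$, and by the two choices of $R_2$ and $R_3$ this tile satisfies $R_1 < d_T(\alpha,0) \leq R_3$, which gives the desired containment.

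There is no serious obstacle here; the lemma is essentially a consequence of two facts already in hand, namely that tiles have bounded diameter and that $T$ is quasi-isometric to $\bb{H}^n$. The only care needed is in tracking the quasi-isometry constants and the tile diameter through the estimates, to ensure both that $R_2$ is taken large enough that the inner $R_1$-ball of tiles is avoided and that $R_3$ is taken large enough to catch every tile that the $\epsi$-neighborhood of $S_{R_2}(0)$ can reach.
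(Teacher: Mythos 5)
Your proposal is correct and follows essentially the same route as the paper: both proofs combine the Milnor--Schwarz quasi-isometry $T \to \bb{H}^n$ with the uniform bound on tile diameter to nest the graph $R_1$-ball inside the hyperbolic $R_2$-ball inside the graph $R_3$-ball, choosing $R_2$ and then $R_3$ by tracking the quasi-isometry constants exactly as you do. The only cosmetic difference is that the paper bounds the hyperbolic distance of points of $\langle \mc{B}_{R_1}(0)\rangle$ from above, while you equivalently bound the graph distance of tiles meeting $N_\epsi(S_{R_2}(0))$ from below.
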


\begin{proof}
The lemma says, more or less, that one can nest the $R_1$-ball in the graph metric inside an $R_2$-ball in the hyperbolic metric, which in turn is nested inside an $R_3$-ball in the graph metric, with some wiggle room to spare (see Figure \ref{hyperbolic handle figure}).  This follows easily from the fact that the inclusion $T \hookrightarrow \bb H^n$ is a quasi-isometry.

Denote $\diam(V_\alpha)=\lambda<\infty$.  There are $C$ and $K$ such that for every point $x \in \langle \mc{B}_{R_1}(0) \rangle$,
\[d(x,0) \leq CR_1+K+\lambda.\]
So we can choose $R_2=CR_1+K+\lambda+\epsi + 1$.  Similarly, for every $\alpha$ such that $V_\alpha$ intersects $N_\epsi(S_{R_2}(0))$,
\[d_T(\alpha,0)<R_3=C(R_2+\lambda + \epsi + 1)+K. \qedhere\]
%
%
%
%
\end{proof}

Now we are ready to finish our construction of $S$. Let $J$ denote the union of all Voronoi cells contained inside the open $R_{2}$-ball around $0$, excluding those Voronoi cells which also intersect $P^{k}$. 
Then, we define 
\[S=\operatorname{closure}(\langle\mc{B}_{R_3}(0)\rangle \setminus J).\]
This set splits into a ``spherical shell'' outside the hyperbolic $R_2$-ball and a ``handle'' inside. Figure \ref{hyperbolic handle figure} gives an illustration of this construction. The reason we take the closure at the end is because when we remove $J$, we remove some parts of boundaries of Voronoi cells in $\langle\mc{B}_{R_3}(0)\rangle$ as well. This way, $S$ corresponds precisely to $\langle \mc S \rangle$ for some subgraph $\mc S$ of $T$. This subgraph $\mc S$ satisfies the conditions in Theorem \ref{generalized theorem 8}:

\begin{figure}
\centering

\tikzset{every picture/.style={line width=0.75pt}} 

\tikzset{every picture/.style={line width=0.75pt}} 

\begin{tikzpicture}[x=0.75pt,y=0.75pt,yscale=-1,xscale=1]
\clip (165,333) rectangle (461,48);
\draw  [fill={rgb, 255:red, 132; green, 132; blue, 132 }  ,fill opacity=0.21, line width=1.5] (297,45) rectangle (341,335);
\draw  [fill={rgb, 255:red, 200; green, 200; blue, 200 }  ,fill opacity=0.66, line width=1.5, even odd rule]  (221.29,93.9) .. controls (223.31,53.21) and (260.68,75.27) .. (296.74,72.68) .. controls (343.97,69.29) and (371.36,65.32) .. (404.58,82.19) .. controls (437.8,95.22) and (428.42,104.42) .. (446.38,164.07) .. controls (460.92,203.33) and (453.9,238.29) .. (450.43,254.24) .. controls (444.65,268.34) and (446.89,300.12) .. (392.07,311.28) .. controls (351.86,319.47) and (269.01,313.73) .. (258.03,308.83) .. controls (226.83,299.01) and (177.09,307.6) .. (170.21,242.58) .. controls (167.7,188) and (189.43,190.88) .. (201.64,157.48) .. controls (214.03,123.59) and (224.03,132.18) .. (221.29,93.9) -- cycle (275.31,132.94) .. controls (289.03,122.98) and (365.58,129.88) .. (371.36,149.81) .. controls (377.14,169.75) and (349.7,201.18) .. (364.14,224.18) .. controls (378.58,247.18) and (283.25,261.75) .. (268.81,238.75) .. controls (254.36,215.75) and (261.59,142.91) .. (275.31,132.94) -- cycle ;
\draw  [dash pattern={on 0.84pt off 2.51pt}] (216.81,191.83) .. controls (216.81,132.97) and (261.75,85.25) .. (317.2,85.25) .. controls (372.64,85.25) and (417.58,132.97) .. (417.58,191.83) .. controls (417.58,250.69) and (372.64,298.4) .. (317.2,298.4) .. controls (261.75,298.4) and (216.81,250.69) .. (216.81,191.83) -- cycle ;
\draw  [line width=1.5]  (226.42,190.6) .. controls (226.42,138.05) and (266.54,95.45) .. (316.04,95.45) .. controls (365.54,95.45) and (405.67,138.05) .. (405.67,190.6) .. controls (405.67,243.15) and (365.54,285.75) .. (316.04,285.75) .. controls (266.54,285.75) and (226.42,243.15) .. (226.42,190.6) -- cycle ;
\draw  [dash pattern={on 0.84pt off 2.51pt}] (236.24,190.6) .. controls (236.24,143.81) and (271.97,105.88) .. (316.04,105.88) .. controls (360.12,105.88) and (395.84,143.81) .. (395.84,190.6) .. controls (395.84,237.39) and (360.12,275.32) .. (316.04,275.32) .. controls (271.97,275.32) and (236.24,237.39) .. (236.24,190.6) -- cycle ;
\draw [line width=2.25]    (342.04,195.51) .. controls (361.54,218.51) and (277.19,221.88) .. (295.82,196.28) ;
\draw   (288.55,212.48) .. controls (281.33,200.98) and (295.39,182.22) .. (296.83,192.18) .. controls (298.27,202.15) and (338.67,203.69) .. (340.55,190.75) .. controls (342.43,177.82) and (362.95,202.48) .. (346.33,214.78) .. controls (329.71,227.08) and (295.77,223.98) .. (288.55,212.48) -- cycle ;

\draw (163.06,118.55) node [anchor=north west][inner sep=0.75pt]    {$\mathcal{B}_{R_{2}}( 0)$};
\draw (362.7,196.97) node [anchor=north west][inner sep=0.75pt]  [font=\small]  {$\mathcal{B}_{R_{1}}( 0)$};
\draw (178.05,251.49) node [anchor=north west][inner sep=0.75pt]    {$N_{\varepsilon }\left( S^{n-1}\right)$};
\draw (315.11,200.94) node [anchor=north west][inner sep=0.75pt]    {$\omega $};
\draw (270.91,188.67) node [anchor=north west][inner sep=0.75pt]    {$\Omega $};
\draw (237,82.4) node [anchor=north west][inner sep=0.75pt]    {$N_{\varepsilon }^{+}$};
\draw (240,160) node [anchor=north west][inner sep=0.75pt]    {$N_{\varepsilon }^{-}$};
\draw (306,150) node [anchor=north west][inner sep=0.75pt]    {$\overline{P^{k}}$};

\end{tikzpicture}

\caption{The hyperbolic handle $S$.}
\label{hyperbolic handle figure}
\end{figure}
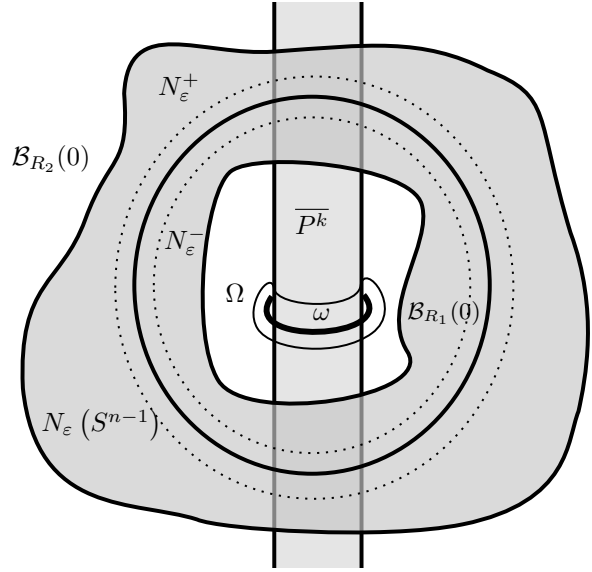
\begin{lem}\label{subgraph satisfies conditions}
The subgraph $\mc S$ is connected, is contained in a ball $\mc B_{R}$ centered at a vertex of $T$, and contains the sphere of the same radius and center. 
\end{lem}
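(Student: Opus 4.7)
The plan is to take $R := R_3$ and the ball center to be the vertex $0$ of $T$ (corresponding to the tile containing the origin of the Poincar\'e model). Unpacking the definitions of $J$ and $S = \operatorname{closure}(\langle\mc B_{R_3}(0)\rangle\setminus J)$, a vertex $\alpha$ of $T$ lies in $\mc S$ if and only if $\alpha \in \mc B_{R_3}(0)$ and either $V_\alpha \not\subset B^\circ_{R_2}(0)$ or $V_\alpha\cap P^k\neq\emptyset$. Hence $\mc S = \mc S_{\mathrm{sh}}\cup \mc S_{\mathrm{h}}$, where
\[\mc S_{\mathrm{sh}} = \{\alpha\in\mc B_{R_3}(0) : V_\alpha\not\subset B^\circ_{R_2}(0)\},\qquad \mc S_{\mathrm{h}} = \{\alpha\in\mc B_{R_3}(0): V_\alpha\cap P^k\neq\emptyset\}.\]
Containment in $\mc B_{R_3}(0)$ is then immediate.

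For the sphere condition, let $\alpha \in D_{R_3}(0)$. The Milnor--Schwarz quasi-isometry $T\to \bb H^n$ (with constants $C\geq 1$, $K\geq 0$ as in Lemma~\ref{layers of balls}) gives $d(x_\alpha,0)\geq (R_3-K)/C = R_2+\lambda+\epsi+1 > R_2$. Since $x_\alpha\in V_\alpha$, the cell $V_\alpha$ is not contained in $B^\circ_{R_2}(0)$, so $\alpha\in\mc S_{\mathrm{sh}}\subset\mc S$.

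The main step is connectivity, which I would establish by showing each of $\mc S_{\mathrm{sh}}$ and $\mc S_{\mathrm{h}}$ is connected and that they share a vertex. For $\mc S_{\mathrm{sh}}$, the realization $\langle\mc S_{\mathrm{sh}}\rangle$ contains the closed annular region $\langle \mc B_{R_3}(0)\rangle\setminus B^\circ_{R_2}(0)$, which is path-connected since $n\geq 2$ (it deformation retracts onto $S_{R_2}(0)\cong S^{n-1}$, using that by Lemma~\ref{layers of balls} an $\epsi$-collar of the sphere lies entirely in $\langle \mc B_{R_3}(0)\rangle$). Given two cells in $\mc S_{\mathrm{sh}}$, I would take a smooth path between interior points of those cells inside $\langle\mc S_{\mathrm{sh}}\rangle$ and then run precisely the argument from the proof of Lemma~\ref{lem:Cayley}: Thom's transversality theorem (Lemma~\ref{thom transv}) allows an inductive perturbation, each step bounded by half the distance to $\bb H^n\setminus\langle\mc S_{\mathrm{sh}}\rangle$, that puts the path in general position with respect to the finitely many $(n-2)$-dimensional cell intersections in the region. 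The resulting path visits a sequence of cells in $\mc S_{\mathrm{sh}}$ consecutively adjacent across $(n-1)$-faces. For $\mc S_{\mathrm{h}}$, the totally geodesic subspace $P^k$ is convex, and $P^k\cap\langle\mc B_{R_3}(0)\rangle$ is path-connected (it contains the convex set $P^k\cap B_{R_2}(0)$, which meets every cell in $\mc S_{\mathrm{h}}$ by following $P^k$ inward); the same transversality argument, carried out \emph{inside} $P^k$ against the $(k-2)$-dimensional slices of $(n-2)$-faces, yields a graph path in $\mc S_{\mathrm{h}}$. Finally, following $P^k$ outward from $0$, the cell $V_\alpha$ containing its first exit point through $S_{R_2}(0)$ satisfies $V_\alpha\cap P^k\neq\emptyset$ and $V_\alpha\not\subset B^\circ_{R_2}(0)$, and $d(x_\alpha,0)\leq R_2+\lambda$ forces $\alpha\in\mc B_{R_3}(0)$, so $\alpha\in\mc S_{\mathrm{sh}}\cap\mc S_{\mathrm{h}}$.

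The principal obstacle is the transversality step: the perturbation must remain inside the chosen region ($\langle\mc S_{\mathrm{sh}}\rangle$ or $P^k$) rather than merely achieving general position in $\bb H^n$, since otherwise intermediate cells in the resulting chain need not lie in $\mc S_{\mathrm{sh}}$ or $\mc S_{\mathrm{h}}$. This is the same subtlety resolved by the shrinking-tolerance scheme in Lemma~\ref{lem:Cayley}, which transfers verbatim once the relevant topological regions have been identified as above.
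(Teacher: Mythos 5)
Your reduction is set up correctly and matches the paper for two of the three conditions: the characterization of $\mc S$ as $\mc S_{\mathrm{sh}}\cup\mc S_{\mathrm{h}}$, the containment in $\mc B_{R_3}(0)$, and the sphere condition via $d(x_\alpha,0)\geq (R_3-K)/C>R_2$ are all fine. The gap is in the connectivity step, and it occurs in both halves for the same reason. For $\mc S_{\mathrm{sh}}$ you assert that $\langle\mc B_{R_3}(0)\rangle\setminus B_{R_2}^{\circ}(0)$ deformation retracts onto $S_{R_2}(0)$; the natural (radial) retraction requires every inward geodesic from a point of this region down to $S_{R_2}(0)$ to stay inside $\langle\mc B_{R_3}(0)\rangle$, and nothing guarantees this: $\mc B_{R_3}(0)$ is a ball in the \emph{graph} metric, its realization need not be star-shaped about $0$, and a cell met by such a geodesic is only known to have graph distance roughly $C(CR_3+K+\lambda)+K$ from $0$, not $\leq R_3$. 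The collar statement of Lemma \ref{layers of balls} only controls an $\epsi$-neighborhood of the sphere, not the whole annular region. The same problem defeats the argument for $\mc S_{\mathrm{h}}$: ``following $P^k$ inward'' from a cell that meets $P^k$ far from the origin traverses cells of $\cl{P^k}$ whose graph distance from $0$ may exceed $R_3$, so this shows neither that $P^k\cap\langle\mc B_{R_3}(0)\rangle$ is path-connected nor that every cell of $\mc S_{\mathrm{h}}$ meets $P^k\cap B_{R_2}(0)$ (the latter is false for cells near the outer edge of $\mc B_{R_3}(0)$).

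The paper's proof avoids this with a device your proposal is missing: to connect a cell $V_\alpha$ of $\mc S$ to the ``core,'' it takes a \emph{graph geodesic} from $\alpha$ to $0$. Every vertex of such a geodesic lies in $\mc B_{R_3}(0)$ automatically, and the associated chain of convex cells runs from a point outside $B_{R_2}(0)$ to $0$, so if $i$ is the first index whose cell is not contained in $\{d>R_2\}$, then $V_{\gamma_0},\dots,V_{\gamma_{i-1}}$ all avoid $B_{R_2}(0)$ (hence lie in $\mc S_{\mathrm{sh}}$) and $V_{\gamma_i}$ meets $S_{R_2}(0)$. This reduces everything to the path-connectivity of $S_{R_2}(0)\cup\bigl(P^k\cap B_{R_2}(0)\bigr)$, all of whose cells do lie in $\mc B_{R_3}(0)$ because their centers are within hyperbolic distance $R_2+\lambda$ of $0$. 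With that substitution --- and keeping your correct observation that the two pieces share a cell at $S_{R_2}(0)\cap P^k$ --- your transversality/perturbation step goes through exactly as in Lemma \ref{lem:Cayley}.
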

\begin{proof}
    For the latter two conditions it suffices to show that $J \cap \langle \partial \mc B_{R_{3}}(0) \rangle$ is empty, but this is guaranteed by the construction in Lemma \ref{layers of balls}.

    For connectedness, we use the same transversality approach from \ref{lem:Cayley}. First, note that $\langle\mc{B}_{R_3}(0)\rangle \setminus J$ is path-connected, because every point $x$ in this set falls into two categories: either it is a point contained in a Voronoi cell intersecting $S_{R_{2}} \cup P^{k},$ or it is a point inside $\langle\mc{B}_{R_3}(0)\rangle $ outside of $B_{R_{2}}$. In the latter case, taking a path in the graph connecting the vertex corresponding to the Voronoi cell containing $x$ and the origin, we see this path must contain some vertex corresponding to a Voronoi cell intersecting $S_{R_{2}}$ (because this path can be translated to a piecewise geodesic path in hyperbolic space from outside $B_{R_{2}}$ to inside, which must intersect $S_{R_{2}}$).

    Therefore, any point in the second category is connected to a point in the first category by a path, hence it suffices to show points in the first category are connected to each other by paths. For two such points $x$ and $y$, we first connect them to points on $S_{R_{2}} \cup P^{k}$ by paths in the respective Voronoi cells, and then observe that $S_{R_{2}} \cup P^{k}$ is path-connected.

    Now, given two centers of Voronoi cells in $S$ (which are necessarily also contained in $\langle\mc B_{R_3}(0)\rangle \setminus J$), we can take a path between them in $\langle\mc B_{R_3}(0)\rangle \setminus J$. Since $J$ is closed this path has a minimum distance from $J$. Using the argument in $\ref{lem:Cayley}$, we can perturb this path so that it remains in $\langle\mc{B}_{R_3}(0)\rangle \setminus J$, starts and ends in the same Voronoi cells, and avoids all intersections of dimension $\leq n-2$. Translating this to a path in the graph we conclude that $\mc S$ is connected. 
\end{proof}

Now we note that the action of an element of $\Gamma$ induces both an automorphism of $T$ and a homeomorphism of $\bb H^n$ which takes tiles to tiles.  In particular, for every point $x_\alpha \in \ms O_x$, there is an element $g \in \Gamma$ such that $g(0)=x_\alpha$.  Then the action of $g$ takes $S$ to a copy $S_\alpha$ centered at $x_\alpha$, and it takes the subgraph $\mc S$ of $T$ to an isomorphic subgraph centered at $\alpha$.  Applying Theorem \ref{generalized theorem 8}, or rather the variant in Remark \ref{rmk:strengthening}, we get:



\begin{cor}
There is a constant $C=C(n,\Gamma)>0$ such that with high probability, there are at least $C|A(t)|$ disjoint $R$-balls $\mc B_j$ in the graph metric such that $\langle \mc B_j \rangle \cap A(t)$ is homeomorphic to $S$.
\end{cor}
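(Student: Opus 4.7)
The plan is to combine three ingredients already established in the paper: the strengthened form of Theorem \ref{generalized theorem 8} given in Remark \ref{rmk:strengthening}, the verification in Lemma \ref{subgraph satisfies conditions} that $\mc S$ satisfies the hypotheses of that theorem, and the non-amenability of $\Gamma$ noted via the Milnor--Schwarz lemma and Example \ref{IP examples}(c).

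First I would set up the automorphism data required by Remark \ref{rmk:strengthening}. Fix $v_0 = 0 \in T$ (the vertex whose tile contains the origin of the Poincar\'e disc). For every other vertex $v$ of $T$, the corresponding center $x_v$ lies in the orbit $\ms O_x$, so by transitivity of $\Gamma$ on $\ms O_x$ there exists a unique $g_v \in \Gamma$ with $g_v(0) = x_v$; uniqueness holds because $x$ was chosen with trivial stabilizer (Lemma \ref{trivialstab}). Define $\phi_v : T \to T$ to be the graph automorphism induced by $g_v$, which is a graph automorphism because $g_v$ is an isometry of $\bb H^n$ and hence preserves $(n-1)$-dimensional intersections of tiles. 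This gives the family of basepoint-shifting automorphisms required by the remark.

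Next, by Lemma \ref{subgraph satisfies conditions}, $\mc S$ is a connected subgraph contained in $\mc B_{R_3}(0)$ and containing the inner boundary $D_{R_3}(0)$, so setting $R = R_3$ the hypotheses of Theorem \ref{generalized theorem 8} (and hence of Remark \ref{rmk:strengthening}) are satisfied. Applying the remark, there is a constant $C_0 = C_0(R_3, d) > 0$ so that with high probability as $t \to \infty$ there are at least $C_0 F(|A(t)|)$ disjoint $R_3$-balls $\mc B_j$, centered at some vertices $v_j$, with $\mc B_j \cap A(t) = \phi_{v_j}(\mc S)$. Passing to the hyperbolic realization, $\langle \mc B_j \cap A(t) \rangle = g_{v_j}(\langle \mc S \rangle) = g_{v_j}(S)$, which is homeomorphic to $S$ because $g_{v_j}$ is a global isometry of $\bb H^n$.

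Finally I would convert the bound $C_0 F(|A(t)|)$ into the linear bound $C|A(t)|$. Since $\Gamma$ acts geometrically on $\bb H^n$, the Milnor--Schwarz lemma gives a quasi-isometry $T \to \bb H^n$, and by Example \ref{IP examples}(c) the group $\Gamma$ is non-amenable, so its isoperimetric profile satisfies $F(N) \geq c_0 N$ for some $c_0 > 0$ (independent of $t$; the quasi-isometry invariance of $F$ up to multiplicative constants is justified in the appendix). Therefore $C_0 F(|A(t)|) \geq C_0 c_0 |A(t)|$, and taking $C = C_0 c_0$ yields the stated bound. The only subtle point, and the reason we invoke Remark \ref{rmk:strengthening} rather than the bare statement of Theorem \ref{generalized theorem 8}, is that an arbitrary graph automorphism of $T$ taking $\mc S$ to $\mc B_j \cap A(t)$ need not be induced by an element of $\Gamma$ and hence need not produce a homeomorphic copy in $\bb H^n$; fixing the $\phi_v$ to be the deck transformations in advance sidesteps this issue cleanly.
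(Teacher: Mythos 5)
Your proposal is correct and follows essentially the same route as the paper's own (much terser) proof: apply the strengthened Theorem \ref{generalized theorem 8} from Remark \ref{rmk:strengthening} with the basepoint-shifting automorphisms induced by the $\Gamma$-action, then convert $F(|A(t)|)$ to $|A(t)|$ using non-amenability of $\Gamma$. Your added details --- the uniqueness of $g_v$ from the trivial stabilizer, the verification via Lemma \ref{subgraph satisfies conditions}, and the explanation of why the bare theorem would not suffice --- are all consistent with the paper's surrounding discussion.
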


\begin{proof}
Let $\mc A(t)$ be the subgraph of $T$ corresponding to $A(t)$.  By Remark \ref{rmk:strengthening}, there are $C_1(R_3,n)F(|A(t)|)$ disjoint $R$-balls $\mc B_j$ such that $\mc A(t) \cap \mc B_j=g \cdot \mc S$, where $F$ is the isoperimetric profile of $\Gamma$.  The statement of the corollary follows from the fact that $F$ is linear.
%
%
\end{proof}

\subsection*{Homology of the hyperbolic Eden model}
To finish the proof of Theorem \ref{H^n betti bound}, it remains to show that each $S$ adds at least $1$ to $\beta_k(t)$.

We have shown that at time $t$, we can find $C|A(t)|$ homeomorphic copies $S_\alpha$ of the hyperbolic handle, centered at points $\{x_\alpha\}_{\alpha \in J}$. For each such handle $S_\alpha$, denote $N^+_\alpha=S_\alpha \setminus B_{R_2-\epsi}(z_\alpha)$, where $R_2$ is the radius of the hyperbolic sphere $S_{R_2}(0)$ chosen in the course of constructing $S$.  Similarly, denote $N^-_\alpha=S_\alpha \cap B_{R_2+\epsi}(z_\alpha)$.  The overlapping subsets $N^+$ and $N^-$ of $S$, whose shifted copies are the sets defined here, are marked in Figure \ref{hyperbolic handle figure}.  Now define
\[M=A(t) \setminus \coprod_{\alpha \in J} B_{R_2-\epsi}(z_\alpha)
\qquad
N=\coprod_{\alpha \in J} N_\alpha^-.\]
Intuitively, $M$ is the part of $A(t)$ that does not include the handles, plus all disjoint copies of $N^+$, and $N$ is just disjoint copies of $N^-$. Then
\[A(t)=\operatorname{int} M \cup \operatorname{int} N \quad\text{and}\quad M\cap N=\coprod_\alpha N_\epsi(S^{n-1})\simeq \coprod_\alpha S^{n-1}.\]
Therefore, since $1 \leq k \leq n-2$, we have that $H_k(M\cap N) \cong 0$.

Write $A$ for $A(t)$ to simplify notation.  The Mayer--Vietoris theorem gives an exact sequence
\[\cdots \to H_{k}(M\cap N) \to H_k(M)\oplus H_k(N) \xrightarrow{\psi} H_k(A) \to H_{k-1}(M\cap N) \to \cdots,\]
and since $H_k(M \cap N) \cong 0$, the map $\psi$ is injective.  Since $N=\coprod_\alpha(N^-_\alpha)$, we have that $H_k(N) \cong \bigoplus_\alpha H_k(N^-_\alpha)$. Therefore, if we can show that for each $\alpha$, the homology group $H_k(N^-_\alpha)$ has rank at least $1$, this completes the proof of Theorem \ref{H^n betti bound}.  Note that all the $N^-_\alpha$ are isometric, so we just need to understand the homology of the set $N^- \subseteq S$.

Note that $N^{-}$ is contained in $B_{R_{2}+\epsi}(0) \setminus \Omega$ because $\overline{P^k}$ is disjoint from $\Omega$ and $\langle \mathcal B_{R_1}(0) \rangle$ contains $\Omega$. On the other hand, by definition, $N^{-}$ contains $S_{R_{2} + \epsi} \cup \left(B_{R_{2}+\epsi}(0) \cap P^{k} \right)$.

Now, note that $B_{R_{2}+\epsi}(0) \setminus \Omega$ deformation retracts to $S_{R_{2} + \epsi} \cup \left(B_{R_{2}+\epsi}(0) \cap P^{k} \right)$.  (A low-dimensional case is the deformation retraction of a solid ball minus a circle lying on a plane deformation retracting to a sphere union a diameter perpendicular to that plane.  In general as in that case, each point moves out along the line linking it to the closest point on $\Omega$.) This deformation retraction restricts to a retraction from $N^{-}$ to $S_{R_{2} + \epsi} \cup \left(B_{R_{2}+\epsi}(0) \cap P^{k} \right)$, and so the inclusion map from $S_{R_{2} + \epsi} \cup \left(B_{R_{2}+\epsi}(0) \cap P^{k} \right)$ to $N^{-}$ induces an injection on homology. Since our aim is to show that $H_{k}(N_{\alpha}^{-})$ has rank at least $1$, it suffices to do so for $H_{k}(S_{R_{2} + \epsi} \cup \left(B_{R_{2}+\epsi}(0) \cap P^{k} \right))$.

We apply Mayer--Vietoris once more. For some sufficiently small $\epsi' > 0$, let $V$ be an open $\epsi'$-neighborhood of $S_{R_{2} + \epsi}$ in $S_{R_{2} + \epsi} \cup \left(B_{R_{2}+\epsi}(0) \cap P^{k} \right)$, and let $W$ be an open $\epsi'$-neighborhood of $B_{R_{2}+\epsi}(0) \cap P^{k}$ in $S_{R_{2} + \epsi} \cup \left(B_{R_{2}+\epsi}(0) \cap P^{k} \right)$. Then $V, W$ are open in $S_{R_{2} + \epsi} \cup \left(B_{R_{2}+\epsi}(0) \cap P^{k} \right) = V \cup W,$ so we may apply Mayer--Vietoris.

We see that $V$ deformation retracts to $S_{R_{2}+ \epsi} \cong S^{n-1}$, $W$ deformation retracts to a point, and $V \cap W$ deformation retracts to $S^{k-1}$. We have the Mayer--Vietoris exact sequence
\begin{align*}
\cdots \to H_k(V) \oplus H_k(W) &\to H_k\left(S_{R_{2} + \epsi} \cup \left(B_{R_{2}+\epsi}(0) \cap P^{k} \right)\right)\\ &\to H_{k-1}(V\cap W) \to H_{k-1}(V) \oplus H_{k-1}(W) \to \cdots.
\end{align*}
We see that $H_{k}(V), H_{k-1}(V), H_{k}(W), H_{k-1}(W)$ are all zero, so we have $$H_k\left(S_{R_{2} + \epsi} \cup \left(B_{R_{2}+\epsi}(0) \cap P^{k} \right)\right) \cong H_{k-1}(V\cap W) \cong H_{k-1}(S^{k-1}) \cong \mathbb{Z},$$ completing the proof of Theorem \ref{H^n betti bound} for $1 \leq k \leq n-2$.

Finally, we address the $k = n-1$ case. Here we can simply take our $S$ to be the union of tiles touching some sufficiently large closed ball $B_{R}$, minus the interior of the tile containing the center of the ball. Then the corresponding subgraph $\mathcal{S}$ is connected and satisfies the other conditions of $\ref{generalized theorem 8}$ by a similar argument as above, hence we can find sufficiently many copies of it. Via the same Mayer-Vietoris argument it suffices to show that each copy of $S$ has $n-1$th homology with rank at least $1$, but this follows from the fact that $S$ retracts to $S^{n-1}$, hence there is a surjection from the $n-1$th homology of $S$ to $H^{n-1}(S^{n-1}) \cong \mathbb{Z}$. 

\begin{rmk}
    Note that throughout this proof we have used the fact that balls and spheres in $\mathbb{R}^{n}$ centered at the origin and contained in the unit disk are exactly the balls and spheres centered at the origin in the disk model for hyperbolic space.
\end{rmk}

\section{Further generalizations} \label{S:further}
We have shown that in $\bb{H}^n$, the Eden model associated with a Voronoi tessellation satisfies Theorem \ref{H^n betti bound}.  In this section we generalize this theorem to a broader class of manifolds and actions on them: those with non-positive sectional curvature.

Let $X$ be a compact connected Riemannian manifold.  We endow the universal cover $\td{X}$ of $X$ with the Riemannian metric lifted from $X$, that is, the Riemannian metric that makes the covering map into a local isometry. The set of homeomorphisms (indeed, isometries) $\phi:\td{X}\to \td{X}$ such that the diagram
\[\xymatrix{
\td{X}\ar[rr]^\phi \ar[dr]_p & &\td{X}\ar[dl]^p\\
&X
}\]
commutes forms a group under composition. This group is called the \deffont{deck group} of $\td{X}$ and is isomorphic to the fundamental group $\pi_1(X)$.  This gives an action of $\pi_1(X)$ by isometries on $\td X$.

We want to generalize the Eden model to a tessellation of $\td{X}$ whose symmetries are given by the action of $\pi_1(X)$.  In the case that $X$ is the flat torus $T^n$ and $\td X=\bb R^n$, this reconstructs the usual Eden model; in the case that $X$ is a hyperbolic manifold, this gives some of the hyperbolic examples described above.  In other cases, we would like to divide $\td X$ into similar ``grid cubes'' or ``tiles''.

\subsection*{Voronoi cells and fundamental domains}
Often, one defines a fundamental domain in the universal cover $\td X$ to be a subset which contains exactly one point from each orbit of $\pi_1(X)$.  We modify this slightly: for us, a \deffont{(closed) fundamental domain} is a closed set $D$ whose translates under the action of $\pi_1(X)$ cover $\td X$ and such that for any $\phi \in \pi_1(X)$,
\[D \cap (\phi \cdot D) \subseteq \partial D.\]
The closure of a sufficiently nice fundamental domain in the usual sense fits this definition.  For example, in Euclidean space $\bb R^n$ thought of as the universal cover of a flat torus, a closed unit cube is a fundamental domain in our sense.  An example of a fundamental domain in the usual sense would be $[0,1)^n$.

A fundamental domain on $\td{X}$ can be obtained from the following construction. Pick a point $x\in\td{X}$, and consider its orbit $\ms{O}_x$ under the action of $\pi_1(X)$. An element in $\ms{O}_x$ is of the form $\phi\cdot x$ for some $\phi\in\pi_1(X)$. As before, the \deffont{Voronoi cell} of an element $x_\alpha \in \ms{O}_x$ is the set
\[V_\alpha=\bigl\{y\in \td{X}: \forall x_\beta\in \ms{O}_x\;d(y,x_\alpha)\leq d(y,x_\beta)\bigr\}.\]
Clearly, the Voronoi cells associated to $\ms O_x$ are translates of each other under the action of $\pi_1(X)$.  In addition:

\begin{lem}
For any $x \in \td X$, Voronoi cells associated with $\ms O_x$ are fundamental domains in $\td{X}$.
\end{lem}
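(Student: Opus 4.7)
The plan is to verify the two defining properties of a closed fundamental domain for a single cell $V_\alpha$; all cells are translates of $V_\alpha$ under the deck action, so the result for one transfers to the others.

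For the covering property, I would argue as follows. Since $X$ is compact and $\widetilde X$ carries the lifted Riemannian metric, $\widetilde X$ is a complete Riemannian manifold, hence proper by Hopf--Rinow. Because the deck action is cocompact, there is a compact $K \subset \widetilde X$ whose $\pi_1(X)$-translates cover $\widetilde X$, so every $y \in \widetilde X$ lies within $\operatorname{diam}(K)$ of $\mathcal O_x$. The intersection of the compact ball $\overline B_{\operatorname{diam}(K)}(y)$ with the discrete orbit $\mathcal O_x$ is finite and nonempty, so a closest point $x_\alpha \in \mathcal O_x$ exists and by definition $y \in V_\alpha$.

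For the boundary property, let $\phi \in \pi_1(X)$ satisfy $\phi \cdot V_\alpha \ne V_\alpha$ and write $\phi \cdot V_\alpha = V_\beta$ with $x_\beta = \phi \cdot x_\alpha$; since the deck action on a universal cover is always free, $x_\beta \ne x_\alpha$. Take $y \in V_\alpha \cap V_\beta$, so $d(y, x_\alpha) = d(y, x_\beta) = L$, and let $\gamma : [0, L] \to \widetilde X$ be a unit-speed minimizing geodesic from $y$ to $x_\beta$, which exists by completeness. I claim that $\gamma(s) \notin V_\alpha$ for arbitrarily small $s > 0$, which exhibits $y \in \partial V_\alpha$. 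Suppose instead that $\gamma(s) \in V_\alpha$ for all $s \in [0, \delta)$. Then $d(\gamma(s), x_\alpha) \le d(\gamma(s), x_\beta) = L - s$, while the triangle inequality gives $d(\gamma(s), x_\alpha) \ge d(y, x_\alpha) - s = L - s$, so $d(\gamma(s), x_\alpha) = L - s$ throughout $[0, \delta)$ and the function $s \mapsto d(\gamma(s), x_\alpha)$ has right derivative $-1$ at $s = 0$. By the first variation formula, this forces $\gamma'(0)$ to coincide with the initial velocity of some minimizing geodesic from $y$ to $x_\alpha$; since a Riemannian geodesic is determined by its initial velocity, $\gamma$ is itself that geodesic and reaches $x_\alpha$ at time $L$. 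But $\gamma(L) = x_\beta$, forcing $x_\alpha = x_\beta$, a contradiction.

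The main obstacle is the boundary step. The hyperbolic proof of Lemma \ref{Voronoi-tessellate} relied on convexity of the half-spaces $\{y : d(y, x_\alpha) \le d(y, x_\beta)\}$, a global feature of non-positive curvature unavailable here. The first variation formula replaces this global convexity with a local differential statement about the rate of change of distance along a geodesic, combined with unique solvability of the geodesic ODE from initial data; this is what makes the argument go through for an arbitrary compact Riemannian manifold.
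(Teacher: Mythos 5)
Your proof is correct and follows essentially the same route as the paper, which proves this lemma by repeating verbatim the argument of Lemma \ref{Voronoi-tessellate}: a closest orbit point exists for the covering property, and for the boundary property one takes a minimizing geodesic from $y$ toward one center and shows its nearby points must leave the other cell, forcing $x_\alpha = x_\beta$ otherwise. The only real difference is in the rigidity step---where you invoke the first variation formula, the paper notes that equality in the triangle inequality makes the concatenation of $\gamma|_{[0,s]}$ with a minimizing geodesic to $x_\alpha$ itself a shortest path, hence a smooth geodesic extending $\gamma|_{[0,s]}$ uniquely---and your closing claim that the paper's hyperbolic argument rests on convexity of half-spaces (and so would not generalize) is a misreading: that convexity argument appears only in the separate lemma on contractibility of intersections, while Lemma \ref{Voronoi-tessellate} itself already works in any complete Riemannian manifold.
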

This is proved in exactly the same way as Lemma \ref{Voronoi-tessellate}.

\begin{lem}\label{vcell bounded diam}
Voronoi cells in $\td{X}$ have bounded diameter: $\diam(V_\alpha)<\infty$. In particular, since all Voronoi cells are isometric, they must all have the same finite diameter.
\end{lem}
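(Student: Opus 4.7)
The plan is to mirror the argument used earlier for the hyperbolic case (the bounded-diameter lemma just after the definition of Voronoi cells): the only structural input needed is that the $\pi_1(X)$-action on $\widetilde X$ is cocompact, and this is exactly what compactness of $X$ provides. So the first step is to produce a compact set $K \subset \widetilde X$ whose $\pi_1(X)$-translates cover $\widetilde X$. This can be done by covering $X$ by finitely many evenly-covered open sets $U_1,\dots,U_n$, choosing a lift $\widetilde U_i$ of each, and letting $K = \bigcup_i \overline{\widetilde U_i}$, which is compact; since $p(K) = X$, the translates $\phi \cdot K$ cover $\widetilde X$.

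Next, fix the basepoint $x$ used to define $\mathcal O_x$ and choose some $h \in \pi_1(X)$ with $h \cdot x \in K$ (this exists because $x$ itself lies in $g \cdot K$ for some deck transformation $g$, and we may take $h = g^{-1}$). Set $D = \diam(K) < \infty$. For any $y \in \widetilde X$ pick $\phi \in \pi_1(X)$ with $y \in \phi \cdot K$; then $\phi h \cdot x$ also lies in $\phi \cdot K$, so $d(y, \phi h \cdot x) \leq D$. Thus every point of $\widetilde X$ is within distance $D$ of some point of $\mathcal O_x$.

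Now apply this to a point $y$ in the Voronoi cell $V_\alpha$: by definition $d(y, x_\alpha) \leq d(y, x_\beta)$ for every $x_\beta \in \mathcal O_x$, and in particular $d(y, x_\alpha) \leq D$. For any two points $y_1, y_2 \in V_\alpha$ the triangle inequality then gives $d(y_1, y_2) \leq 2D$, so $\diam(V_\alpha) \leq 2D$. The final sentence of the lemma is automatic: since all Voronoi cells are translates of $V_\alpha$ under isometries of $\widetilde X$, they share the same (finite) diameter.

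There is no real obstacle here; the proof is essentially identical to its hyperbolic analogue, with ``cocompact lattice in $\Isom(\bb H^n)$'' replaced by ``deck action on the universal cover of a compact manifold''. The only thing to be mildly careful about is the existence of the compact set $K$ covering $\widetilde X$ under the deck action, which is the standard translation of ``$X$ is compact'' into the cocompactness statement for $\pi_1(X) \curvearrowright \widetilde X$.
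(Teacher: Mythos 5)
Your proof is correct, but it takes a different route from the paper's own proof of this lemma. You transplant the cocompactness argument from the earlier hyperbolic Voronoi lemma: build a compact set $K \subset \widetilde X$ whose $\pi_1(X)$-translates cover $\widetilde X$, note that every point of $\widetilde X$ is then within $\diam K$ of the orbit $\mathscr O_x$, and conclude $\diam(V_\alpha) \leq 2\diam K$. The paper instead argues by path lifting: given $y \in \widetilde X$, project to $y' = p(y)$, join $y'$ to $x' = p(x)$ by a minimal geodesic in $X$, and lift this geodesic starting at $y$; since $p$ is a local isometry the lift has the same length and ends at a point of $\mathscr O_x$, so every point of $\widetilde X$ lies within $\diam X$ of the orbit and $\diam(V_\alpha) \leq 2\diam X$. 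The paper's version is shorter and gives the clean intrinsic bound $2\diam X$; yours is structurally identical to the hyperbolic case but carries the extra burden of verifying that $K$ is compact --- the closures $\overline{\widetilde U_i}$ are compact only if the evenly covered sets $U_i$ are chosen bounded (e.g.\ small metric balls), so that each lift is bounded and one can invoke completeness of $\widetilde X$ together with Hopf--Rinow. Alternatively, you could have skipped the construction entirely by citing the fact recorded in the preliminaries that the deck action of $\pi_1(X)$ on $\widetilde X$ is geometric, hence cocompact. Either way, your argument is sound.
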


\begin{proof}
    Consider Voronoi cells $\{V_\alpha\}$ centered at points of the orbit $\ms{O}_x$.  Since $X$ is compact, it has finite diameter.  We will show that $\diam(V_\alpha) \leq 2\diam X$.
    
    Take a point $y\in\td{X}$.  Let $y'=p(y)$ and $x'=p(x)$, where $p:\td X \to X$ is the covering map. Connect $y'$ to $x'$ by a minimal geodesic $\gamma'$ in $X$. There is a unique lift $\gamma:[0,1]\to\td{X}$ with $\gamma(0)=y$ and $\gamma(1)=x_\alpha$ for some $x_\alpha\in\ms{O}_x$.  Therefore the distance from $y$ to the nearest point of $\ms O_x$ is bounded above by $d(x',y')$.  

    It follows that for any $y$ and $z$ located in the same Voronoi cell $V_\alpha$, there is a path from $y$ to $z$ through $x_\alpha$ of length at most $2\diam X$.
\end{proof}

\subsection*{The Eden model on $\td X$}
We generalize the Eden model to this setting the same way we did for hyperbolic tessellations: pick a Voronoi cell to start with; then at every time step, randomly choose a Voronoi cell adjacent to the infected region to infect next.  So that our model generalizes the Euclidean and hyperbolic ones, we say that two Voronoi cells are \deffont{adjacent} if their intersection is $(n-1)$-dimensional in the sense of Lebesgue covering dimension. We will use $A(t)$ to denote the Eden model at time $t$.

As before, we view each Voronoi cell $V_\alpha$ as the vertex $\alpha$ of a graph $T$, with an edge from $\alpha$ to $\beta$ in $T$ if and only if 
\[\dim(V_\alpha\cap V_\beta) \geq n-1.\]
\begin{lem}
  The graph $T$ is a Cayley graph for $\pi_1(X)$.
\end{lem}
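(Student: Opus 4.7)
The plan is to invoke the recognition theorem for Cayley graphs, Lemma \ref{recognizeCayley}, exactly as in the proof of Lemma \ref{lem:Cayley} for the hyperbolic case. It suffices to verify that $\pi_1(X)$ acts on $T$ freely, transitively, and by graph isomorphisms, and that $T$ is locally finite and connected. Elements of $\pi_1(X)$ act on $\widetilde X$ by isometries, so they permute Voronoi cells and preserve the dimensions of their intersections; hence they act on $T$ by graph isomorphisms. Since deck transformations act freely on $\widetilde X$, the induced action on the orbit $\ms O_x$, and therefore on the vertex set of $T$, is free and transitive.

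For local finiteness, Lemma \ref{vcell bounded diam} gives a uniform bound $\lambda$ on the diameter of every Voronoi cell, so every cell adjacent to $V_\alpha$ must lie in the closed ball $B_{2\lambda}(x_\alpha)$. Since all Voronoi cells are isometric translates of each other they have the same positive volume, and this ball is a compact subset of a Riemannian manifold and so has finite volume; only finitely many cells can fit.

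The hard part is connectedness, which I would prove in the same spirit as Lemma \ref{lem:Cayley}. Given vertices $\alpha,\beta$, first take a smooth path $\gamma:[0,1]\to\widetilde X$ from a point of $V_\alpha$ to a point of $V_\beta$; its image lies in a compact set $K$ that meets only finitely many Voronoi cells, by local finiteness applied cell by cell. The aim is to perturb $\gamma$ to avoid the ``bad set'' $B\subseteq K$ consisting of points lying in intersections of three or more Voronoi cells, so that the resulting path crosses cell boundaries only through their $(n-1)$-dimensional shared faces; the sequence of cells it visits then gives the desired edge path in $T$.

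The main obstacle, relative to the hyperbolic case, is that bisectors $\{y:d(y,x_\alpha)=d(y,x_\beta)\}$ need not be totally geodesic here; indeed by the converse of Lemma \ref{H^n, bisec=geo} they generically are not, so they need not be smooth submanifolds globally. However, at any $y$ outside the cut loci of $x_\alpha$ and $x_\beta$, the squared-distance difference $\phi(y)=d^2(y,x_\alpha)-d^2(y,x_\beta)$ is smooth with nonvanishing gradient, so locally the bisector is a smooth hypersurface. Thus, outside a finite union of cut loci inside $K$, the bad set $B$ is contained in a finite union of pairwise intersections of smooth bisector hypersurfaces, hence in a finite union of smooth submanifolds of codimension $\geq 2$. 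Applying Thom's transversality theorem (Lemma \ref{thom transv}) exactly as in Lemma \ref{lem:Cayley}, I would perturb $\gamma$ to miss these submanifolds. Handling the cut loci, which are closed sets that need not themselves be submanifolds, is the subtlety that must be treated carefully; one exploits that the portions of bisectors meeting the cut locus form a set of even smaller dimension, so the combined bad set is still avoidable by a small transversal perturbation of $\gamma$.
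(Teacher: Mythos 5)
Your framework (Lemma \ref{recognizeCayley}, the free and transitive action by graph isomorphisms, and the volume argument for local finiteness) matches the paper. The gap is in connectedness. Note first that this lemma is stated for an arbitrary compact connected Riemannian $X$ --- the non-positive curvature hypothesis is only imposed afterwards --- so you cannot assume cut loci are absent. More importantly, the transversality strategy breaks down for a structural reason you partially identify but do not overcome: Thom's theorem (Lemma \ref{thom transv}) requires the set to be avoided to be a finite union of \emph{smooth submanifolds}, and the perturbation only helps if those submanifolds have codimension at least $2$. In the hyperbolic proof this is supplied by the fact that bisectors are totally geodesic hyperplanes, so distinct ones meet in totally geodesic subspaces of dimension at most $n-2$. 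In a general (even non-positively curved, cut-locus-free) manifold, two smooth bisector hypersurfaces can be tangent or coincide along sets of dimension $n-1$, so the ``bad set'' need not be contained in smooth codimension-$2$ submanifolds; and your assertion that the portions of bisectors meeting the cut loci have ``even smaller dimension'' is precisely the unproved hard step. Indeed, the paper cites \cite{Beem} for the fact that totally geodesic bisectors characterize constant curvature, and accordingly defines adjacency here via Lebesgue covering dimension, $\dim(V_\alpha \cap V_\beta) \geq n-1$, precisely because the intersections need not be nice faces.

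The paper sidesteps all of this with a purely dimension-theoretic argument. Given a nonempty proper subset $\mathcal U$ of the vertices, it shows there is an edge between $\mathcal U$ and $\mathcal U^c$: taking a Euclidean neighborhood $V$ of a curve from the interior of $\langle \mathcal U\rangle$ to the interior of $\langle \mathcal U^c\rangle$, a separation theorem \cite[Theorem 1.8.12]{Engelking} forces $\dim(V \cap \langle\mathcal U\rangle \cap \langle\mathcal U^c\rangle) \geq n-1$, and the countable sum theorem \cite[Corollary 50.3]{Munkres} then forces some single $V_\alpha \cap V_\beta$ with $\alpha \in \mathcal U$, $\beta \in \mathcal U^c$ to have dimension at least $n-1$, i.e.\ to be an edge of $T$. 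If you want to salvage a path-perturbation argument, you would need to prove, not just assert, that the union of all cell intersections of covering dimension $\leq n-2$ can be avoided by an arbitrarily small perturbation of the path; the dimension-theoretic route avoids having to do so.
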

In particular, this means that $T$ is infinite, locally finite, connected, and vertex-transitive.
\begin{proof}
  The proof is similar to that of Lemma \ref{lem:Cayley}.  Since $\pi_1(X)$ acts freely and transitively on $T$ by graph isomorphisms, by \ref{recognizeCayley} it suffices to prove that $T$ is locally finite and connected.  The proof of local finiteness is the same as for Lemma \ref{lem:Cayley}.

  It remains to show that $T$ is connected.  Let $\mathcal U$ be a nonempty, non-full subset of the vertices of $T$.  We will show that there is at least one edge between $\mathcal U$ and $\mathcal U^c$.  Denote the union of fundamental domains corresponding to vertices in a set $\mc A$ by $\langle \mc A \rangle$.  Then
  \[\td X=\langle \mc U \rangle \cup \langle \mc U^c \rangle\]
  and $\langle \mc U \rangle$ and $\langle \mc U^c \rangle$ both have nonempty interior.  Choose points $y \in \operatorname{int}\langle \mc U \rangle$ and $z \in \operatorname{int}\langle \mc U^c \rangle$ and let $V$ be a neighborhood homeomorphic to $\bb R^n$ of a simple curve between $y$ and $z$.  By \cite[Theorem 1.8.12]{Engelking},
  \[\dim(V \cap \langle \mc U \rangle \cap \langle \mc U^c \rangle) \geq n-1.\]
  On the other hand,
  \[\langle \mc U \rangle \cap \langle \mc U^c \rangle=\bigcup_{\alpha \in \mc U, \beta \in \mc U^c} V_\alpha \cap V_\beta.\]
  Since this is a countable union, and indeed a finite union if we restrict to any compact subset of $\td X$, by \cite[Corollary 50.3]{Munkres}, for some $\alpha$ and $\beta$, $V_\alpha \cap V_\beta$ is at least $(n-1)$-dimensional.
\end{proof}

Now suppose that $X$ has non-positive (but not necessarily constant) sectional curvature, $K\leq 0$.  Among other nice properties, this condition guarantees that every pair of points $x,y\in\td{X}$ is connected by a unique geodesic.  We will show the following generalization of Theorem \ref{H^n betti bound}:
\begin{thm} \label{NPC betti bound}
  Let $X$ be a compact connected manifold of non-positive sectional curvature.  Let $A(t)$ be the Eden model at time $t$ on a tessellation of $\td X$ by Voronoi cells of an orbit $\pi_1(X) \cdot x$.  Let $0\leq k\leq n-1$ and let $\beta_k(t)$ denote the $k$th Betti number of $A(t)$, i.e. $\beta_k(t)=\rank H_k(A(t))$.  Then there is a constant $C=C(X,k)>0$ such that
  \[\beta_k(t)\geq CF(|A(t)|)\]
  with high probability as $t\to\infty$, where $F:\bb N \to \bb N$ is the isoperimetric profile of $\pi_1(X)$.
\end{thm}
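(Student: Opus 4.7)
The plan is to adapt the proof of Theorem \ref{H^n betti bound} to the NPC setting essentially verbatim. We will construct a ``handle'' $S\subset\td X$ analogous to the hyperbolic handle of Figure \ref{hyperbolic handle figure}, apply the enhanced Theorem \ref{generalized theorem 8} (Remark \ref{rmk:strengthening}) to find at least $CF(|A(t)|)$ deck-translated copies of $S$ inside $A(t)$, and then use a Mayer--Vietoris argument to show that each copy contributes at least $1$ to $\beta_k$.

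For the construction, pick a basepoint $x\in\td X$ at the center of a Voronoi cell. By the Cartan--Hadamard theorem, $\td X$ is CAT$(0)$ and $\exp_x:T_x\td X\to\td X$ is a diffeomorphism; in particular $B_r(x)=\exp_x(\{u\in T_x\td X:|u|\leq r\})$ for every $r>0$. Fix perpendicular linear subspaces $V^k,V^{n-k}\subset T_x\td X$, and let $P^k=\exp_x(V^k)$, $P^{n-k}=\exp_x(V^{n-k})$, and $\Omega=\exp_x(\{v\in V^{n-k}:|v|=R_0\})$ for some $R_0$ to be chosen. The NPC analogue of Lemma \ref{hyperbolic divergence} follows from the CAT$(0)$ comparison inequality applied to the geodesics $t\mapsto\exp_x(tu)$ and $t\mapsto\exp_x(tv)$ meeting at $x$ at angle $\pi/2$: for $u\in V^k$ and $v\in V^{n-k}$,
\[d\bigl(\exp_x(u),\exp_x(v)\bigr)\geq \sqrt{|u|^2+|v|^2}.\]
As soon as $R_0$ exceeds the common diameter $\lambda$ of the Voronoi cells, this forces every tile meeting $\Omega$ to be disjoint from $P^k$. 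With this divergence lemma in hand, the construction of $S$, $\mc S$, and the constants $R_1<R_2<R_3$---including the transversality argument verifying that $\mc S$ is connected, is contained in $\mc B_{R_3}(0)$, and contains $\partial \mc B_{R_3}(0)$---goes through word-for-word as in the hyperbolic case. (Bisectors of pairs of points in a Riemannian manifold are generically codimension-$1$, so the transversality argument is unaffected by the loss of convexity of the cells.)

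Applying Theorem \ref{generalized theorem 8} together with Remark \ref{rmk:strengthening} then produces at least $CF(|A(t)|)$ disjoint $R_3$-balls $\mc B_j$ in $T$ with $A(t)\cap\mc B_j$ equal to a $\pi_1(X)$-translate of $\mc S$. Because the deck action is by isometries of $\td X$, each yields a homeomorphic copy $S_\alpha$ of $S$ centered at some $x_\alpha\in A(t)$. Setting $M=A(t)\setminus\coprod_\alpha B_{R_2-\epsi}(x_\alpha)$ and $N=\coprod_\alpha N_\alpha^-$ exactly as in the hyperbolic proof, one finds $M\cap N\simeq \coprod_\alpha S^{n-1}$, so the Mayer--Vietoris sequence (with $\bb Z/2$ coefficients) gives an injection $\bigoplus_\alpha H_k(N_\alpha^-;\bb Z/2)\hookrightarrow H_k(A(t);\bb Z/2)$ for $1\leq k\leq n-2$. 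Because $\exp_{x_\alpha}$ is a diffeomorphism, the pair $(B_{R_2+\epsi}(x_\alpha),\Omega_\alpha)$ is diffeomorphic to the standard pair of a round $n$-ball with a perpendicularly embedded $(n-k-1)$-sphere, so $N_\alpha^-$ deformation-retracts onto $S^{n-1}\cup_{S^{k-1}} D^k$, whose $k$th mod $2$ homology is $\bb Z/2$. The case $k=n-1$ is handled separately, as in the hyperbolic proof, by taking $S$ to be a spherical shell of tiles around a ball.

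The main obstacle to this plan is that, unlike in $\bb H^n$, Voronoi cells in a general NPC manifold need not be convex. Fortunately, convexity was only used in the proof of Theorem \ref{H^n betti bound} to obtain the trivial upper bound via the nerve theorem, and it plays no role in the lower bound we seek. All that the hyperbolic proof actually used about the ambient geometry is the divergence of perpendicular geodesic rays through the basepoint, and this is supplied in the necessary form by the CAT$(0)$ comparison inequality above.
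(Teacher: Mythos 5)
Your proposal is correct and follows essentially the same route as the paper: the paper likewise replaces the Poincar\'e-disc computation by the exponential map at a cell center together with the NPC law of cosines (your CAT$(0)$ comparison inequality is exactly this, specialized to a right angle) to get the divergence lemma, and then reruns the hyperbolic construction, the strengthened Theorem \ref{generalized theorem 8}, and the Mayer--Vietoris argument verbatim. Your observation that convexity of the cells is only needed for the (omitted) upper bound also matches the paper's discussion.
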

Here the isoperimetric profile of $\pi_1(X)$ is the isoperimetric profile of its Cayley graph for any finite generating set.  Since all such Cayley graphs are quasi-isometric, this is well-defined up to a multiplicative constant, as shown in Theorem \ref{quasiisometry invariance of isoperimetric profile}.  In the case that $X$ has not just non-positive but \emph{negative} sectional curvature, the fundamental group $\pi_1(X)$ is Gromov hyperbolic and therefore non-amenable (see Example \ref{IP examples}(c)).  It follows that we again have a linear lower bound.

The proof of this theorem is very similar to that of Theorem \ref{H^n betti bound}.  In fact, the only steps at which we use hyperbolicity are the construction of $P^k$ and $P^{n-k}$ and the proof of Lemma \ref{hyperbolic divergence}, which use the Poincar\'e disc model explicitly.  We replace this machinery by using the exponential map at a point $x \in \td X$.


%



\subsection*{The Exponential Map}
Let $M$ be a complete Riemannian manifold and let $p\in M$. Given a tangent vector $v\in T_p M$, by the Hopf--Rinow theorem, there is a unique geodesic $\gamma_v$ in $M$ with $\gamma_v(0)=p$ and $\gamma_v'(0)=v$. Then the \deffont{exponential map} $\exp_p:T_pM\to M$ is defined by
\[\exp_p(v)=\gamma_v(1).\]
If $M$ is non-positively curved and simply connected, then the Cartan--Hadamard theorem states that since there is a unique geodesic between $p$ and any other point, the exponential map is a diffeomorphism. Moreover, the derivative of the exponential map at the origin of the tangent space is the identity: $\dif(\exp_p)_0=\mathrm{id}$.

Alternatively, we may reparametrize all of our geodesics with unit speed.  Let $\eta_v$ be the unit speed parametrization of $\gamma_v$. Since the geodesic equation $\nabla_{\dot{\gamma_v}}\dot{\gamma_v}=0$ implies that $\gamma_v$ is a curve of constant speed, we see that $\gamma_v(1)=\eta_v(|v|)$. Therefore, we can equivalently think of the exponential map as the map $\exp_p:T_pM\to M$ defined by
\[\exp_p(v)=\eta_v(|v|).\]

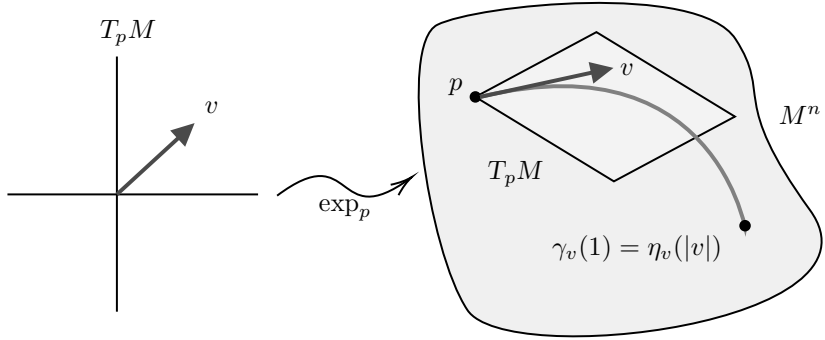
\begin{figure}
    \centering

\tikzset{every picture/.style={line width=0.75pt}} 

\begin{tikzpicture}[x=0.75pt,y=0.75pt,yscale=-1,xscale=1]

\draw [line width=0.75]    (94.4,150.96) -- (220.8,150.96) ;
\draw [line width=0.75]    (149.6,210.16) -- (149.6,81.36) ;
\draw  [fill={rgb, 255:red, 155; green, 155; blue, 155 }  ,fill opacity=0.15 ][line width=0.75]  (311.2,65.36) .. controls (331.2,55.36) and (442.8,43.6) .. (461.6,72.56) .. controls (480.4,101.52) and (457.6,101.36) .. (499.2,158.96) .. controls (540.8,216.56) and (346.4,239.36) .. (326.4,209.36) .. controls (306.4,179.36) and (291.2,75.36) .. (311.2,65.36) -- cycle ;
\draw  [fill={rgb, 255:red, 255; green, 255; blue, 255 }  ,fill opacity=0.21 ] (391.44,69.05) -- (461.29,111.68) -- (400.26,144.38) -- (330.4,101.76) -- cycle ;
\draw  [fill={rgb, 255:red, 0; green, 0; blue, 0 }  ,fill opacity=1 ] (332.8,101.76) .. controls (332.8,100.43) and (331.73,99.36) .. (330.4,99.36) .. controls (329.07,99.36) and (328,100.43) .. (328,101.76) .. controls (328,103.09) and (329.07,104.16) .. (330.4,104.16) .. controls (331.73,104.16) and (332.8,103.09) .. (332.8,101.76) -- cycle ;
\draw [color={rgb, 255:red, 74; green, 74; blue, 74 }  ,draw opacity=1 ][line width=1.5]    (149.6,150.96) -- (185.85,117.67) ;
\draw [shift={(188.8,114.96)}, rotate = 137.44] [fill={rgb, 255:red, 74; green, 74; blue, 74 }  ,fill opacity=1 ][line width=0.08]  [draw opacity=0] (11.61,-5.58) -- (0,0) -- (11.61,5.58) -- cycle    ;
\draw [color={rgb, 255:red, 128; green, 128; blue, 128 }  ,draw opacity=1 ][line width=1.5]    (332.8,101.76) .. controls (458.4,70.96) and (468,182.16) .. (466.4,166.16) ;
\draw  [fill={rgb, 255:red, 0; green, 0; blue, 0 }  ,fill opacity=1 ] (468.8,166.76) .. controls (468.8,165.43) and (467.73,164.36) .. (466.4,164.36) .. controls (465.07,164.36) and (464,165.43) .. (464,166.76) .. controls (464,168.09) and (465.07,169.16) .. (466.4,169.16) .. controls (467.73,169.16) and (468.8,168.09) .. (468.8,166.76) -- cycle ;
\draw    (230.4,151.6) .. controls (270,121.9) and (258.63,171.16) .. (297.22,143.04) ;
\draw [shift={(298.4,142.16)}, rotate = 143.13] [color={rgb, 255:red, 0; green, 0; blue, 0 }  ][line width=0.75]    (10.93,-3.29) .. controls (6.95,-1.4) and (3.31,-0.3) .. (0,0) .. controls (3.31,0.3) and (6.95,1.4) .. (10.93,3.29)   ;
\draw [color={rgb, 255:red, 74; green, 74; blue, 74 }  ,draw opacity=1 ][line width=1.5]    (332.8,101.76) -- (396.14,87.97) ;
\draw [shift={(400.05,87.12)}, rotate = 167.72] [fill={rgb, 255:red, 74; green, 74; blue, 74 }  ,fill opacity=1 ][line width=0.08]  [draw opacity=0] (11.61,-5.58) -- (0,0) -- (11.61,5.58) -- cycle    ;

\draw (139.6,61.6) node [anchor=north west][inner sep=0.75pt]    {$T_pM$};
\draw (334.88,132.6) node [anchor=north west][inner sep=0.75pt]  [xslant=-0.08]  {$T_{p} M$};
\draw (315.6,90.8) node [anchor=north west][inner sep=0.75pt]    {$p$};
\draw (192.4,102.8) node [anchor=north west][inner sep=0.75pt]    {$v$};
\draw (402,83.6) node [anchor=north west][inner sep=0.75pt]    {$v$};
\draw (367.6,170.2) node [anchor=north west][inner sep=0.75pt]    {$\gamma _{v}( 1) =\eta _{v}( |v|)$};
\draw (250,152) node [anchor=north west][inner sep=0.75pt]    {$\exp_{p}$};
\draw (482,104.6) node [anchor=north west][inner sep=0.75pt]    {$M^{n}$};

\end{tikzpicture}

\caption{Illustration of the map $\exp_p$.}

\end{figure}

Now take $M$ to be $\td X$ and the point $p$ to be the center $x$ of one of our Voronoi cells.  By construction, $\td X$ is simply connected, complete, and has non-positive sectional curvature.  Choose two orthogonal vector subspaces $H^k$ and $H^{n-k} \subseteq T_x \td X$ of the corresponding dimensions.  Define submanifolds $P^k=\exp_p(H^k)$ and $P^{n-k}=\exp_p(H^{n-k})$ in $\td{X}$.  Since the geodesic $\eta_v(t)$ only depends on the direction of the tangent vector $v$, the geodesic ray from the origin in $\bb{R}^n$ in the direction $v/|v|$ will be mapped onto the geodesic $\eta_v$ in $\td X$.  As in the hyperbolic construction, let $\cl{P^k}$ be the union of all Voronoi cells that intersect $P^k$ and $\cl{P^{n-k}}$ be the union of all Voronoi cells that intersect $P^{n-k}$ but not $P^k$.

We state the following result (which is a consequence of the Rauch comparison theorem) without proof:

\begin{lem}[Law of cosines]
Let $(M,g)$ be a complete simply connected Riemannian manifold with non-positive sectional curvature, and consider a geodesic triangle in $M$ whose side lengths are $a$, $b$, and $c$ with opposite angles $A$, $B$, and $C$ respectively. Then we have
\[a^2+b^2-2ab\cos(C)\leq c^2.\]
\end{lem}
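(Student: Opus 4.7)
The plan is to reduce the inequality to the classical Euclidean law of cosines via comparison with a Euclidean triangle. Since $M$ is a geodesic metric space (it is a complete Riemannian manifold), the side lengths $a,b,c$ satisfy the triangle inequality, so there is a Euclidean triangle $\tilde\triangle \subset \bb{R}^2$ realizing these same three side lengths; denote its angles by $\tilde A, \tilde B, \tilde C$, placed opposite sides of length $a, b, c$ respectively.

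Next, invoke the triangle comparison theorem for Riemannian manifolds of sectional curvature bounded above by $\kappa$: every vertex angle of a geodesic triangle is at most the corresponding angle of its comparison triangle in the simply connected model space of constant curvature $\kappa$. For Hadamard manifolds (complete, simply connected, $K\leq 0$) this is the standard globalization of the Rauch Jacobi-field estimate, as the lemma hint indicates, combined with Cartan--Hadamard to guarantee uniqueness of geodesics. Taking $\kappa = 0$ and the model space $\bb{R}^2$ yields $C \leq \tilde C$. Since both angles lie in $[0,\pi]$, where cosine is monotonically decreasing, $\cos C \geq \cos\tilde C$.

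Finally, the Euclidean law of cosines applied to $\tilde\triangle$ gives
\[c^2 = a^2 + b^2 - 2ab\cos\tilde C \geq a^2 + b^2 - 2ab\cos C,\]
which is exactly the desired inequality.

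The main obstacle, if one wants a self-contained proof and wishes to avoid the black-box comparison statement, is to derive the angle bound $C \leq \tilde C$ directly from Rauch. The standard route is to parametrize one of the two sides meeting at the vertex of angle $C$ by arclength $s \in [0,a]$, connect each such point to the opposite vertex by the unique minimizing geodesic of length $\ell(s)$, and compare $\ell(s)^2$ to its Euclidean counterpart $\ell_E(s)^2 = s^2 + b^2 - 2sb\cos C$. Both functions agree to first order at $s = 0$ by the first variation of arclength (the initial derivative $-\cos C$ comes from projecting the variation field onto the initial tangent of the connecting geodesic). The Rauch II estimate then bounds the perpendicular Jacobi fields appearing in the second variation formula from below by their Euclidean models, and integrating the resulting differential inequality produces $\ell(a)^2 \geq \ell_E(a)^2$, which with $\ell(a) = c$ is the desired bound. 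Granting the Rauch comparison, everything else is routine calculus.
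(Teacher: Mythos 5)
The paper states this lemma without proof, remarking only that it is a consequence of the Rauch comparison theorem, so there is no argument in the text to compare against. Your proof is correct and is the standard one implicit in that remark: form the Euclidean comparison triangle with the same side lengths (legitimate since in a Hadamard manifold geodesics are minimizing, so the triangle inequality holds), apply the angle comparison $C\leq\tilde C$ for nonpositive upper curvature bounds, and conclude via monotonicity of cosine on $[0,\pi]$ together with the Euclidean law of cosines; your sketch of how to extract the angle bound from Rauch II and the first/second variation formulas is also the usual route and contains no gaps.
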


This gives the following result:

\begin{cor} \label{NPC divergence}
    There exists $R_1>0$ such that for all vertices $\alpha$ of $T$ satisfying $d_T(\alpha,0)\geq R_1$, if the Voronoi cell $V_\alpha$ intersects $P^{n-k}$, then $V_\alpha \in \cl{P^{n-k}}$.  Here $0$ is the Voronoi cell centered at $x$.
\end{cor}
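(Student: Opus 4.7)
The plan is to follow the proof of Lemma \ref{hyperbolic divergence}, substituting the NPC law of cosines stated just above for the explicit Poincar\'e disc distance formula. The key geometric fact is that geodesics from $x$ in the orthogonal directions $H^k$ and $H^{n-k}$ diverge at least as fast as in Euclidean space, so that the two sheets $P^k$ and $P^{n-k}$ move far apart away from $x$.

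First I would observe that for any $u \in P^k \setminus \{x\}$ and $v \in P^{n-k} \setminus \{x\}$, the geodesic triangle $\triangle x u v$ has a right angle at $x$. Indeed, by Cartan--Hadamard the exponential map $\exp_x$ is a diffeomorphism, so there is a unique geodesic from $x$ to $u = \exp_x(w_1)$, namely $t \mapsto \exp_x(t w_1)$, whose initial velocity is a positive multiple of $w_1 \in H^k$. Likewise the geodesic from $x$ to $v = \exp_x(w_2)$ has initial velocity a positive multiple of $w_2 \in H^{n-k}$, and $H^k \perp H^{n-k}$ by construction. Applying the Law of Cosines with $C = \pi/2$ at $x$ yields
$$d(u,v)^2 \;\geq\; d(x,u)^2 + d(x,v)^2.$$

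Next, suppose for contradiction that some Voronoi cell $V_\alpha$ intersects both $P^k$ and $P^{n-k}$, and choose $u \in V_\alpha \cap P^k$ and $v \in V_\alpha \cap P^{n-k}$. Let $D$ denote the uniform diameter of Voronoi cells furnished by Lemma \ref{vcell bounded diam}. Then $d(u,v) \leq D$, while the triangle inequality gives $d(x,u), d(x,v) \geq d(x,x_\alpha) - D$. Combining these with the law of cosines yields
$$D^2 \;\geq\; 2\bigl(d(x,x_\alpha) - D\bigr)^2,$$
which is impossible as soon as $d(x,x_\alpha) > D\bigl(1 + 1/\sqrt{2}\bigr)$. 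To translate this to a condition on the graph distance, the Milnor--Schwarz lemma provides quasi-isometry constants $C \geq 1$ and $K \geq 0$ such that $d(x, x_\alpha) \geq (d_T(\alpha, 0) - K)/C$, and so $R_1 := C \cdot D\bigl(1 + 1/\sqrt{2}\bigr) + K + 1$ (or any larger value) works.

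The only real subtlety is justifying the right angle at $x$: this relies on both the orthogonality of $H^k$ and $H^{n-k}$ inside $T_x \td X$ and the uniqueness of geodesics from $x$ supplied by Cartan--Hadamard in the non-positively curved simply connected setting. Once that is in hand, the remainder of the argument is a direct translation of the hyperbolic case, with the law of cosines playing the role of the Poincar\'e disc distance formula.
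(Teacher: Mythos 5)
Your proposal is correct and takes essentially the same route as the paper's proof: establish the right angle at $x$ via the exponential map and orthogonality of $H^k$ and $H^{n-k}$, apply the non-positively-curved law of cosines to force $d(u,v)>\diam(V_\alpha)$, and convert the hyperbolic-distance threshold to a graph-distance threshold via Milnor--Schwarz. If anything, you are slightly more careful than the paper in passing from ``$d(x,x_\alpha)$ is large'' to ``the intersection points $u\in V_\alpha\cap P^k$ and $v\in V_\alpha\cap P^{n-k}$ are both far from $x$,'' which the paper's write-up glosses over.
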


\begin{proof}
    Choose $R_0=\diam(V_\alpha)/\sqrt{2}$. Choose $y \in P^k$ and $z \in P^{n-k}$ such that $d(x,y)$ and $d(x,z)$ are both greater than $R_0$.  Then consider the geodesic triangle spanned by $x$, $y$, and $z$.  Since the exponential map takes lines through the origina to geodesics, the unique geodesic from $x$ to $y$ is the image of a straight line segment contained in $H^k$.  Similarly, the geodesic from $x$ to $z$ is the image of a straight line segment in $H^{n-k}$.  Moreover, since $\dif(\exp_p)_0=\text{id}$, it preserves the angle between geodesics through $x$.  Hence our triangle has a right angle at $x$, and by the law of cosines,
    \[d(x,y)^2\geq d(x,p)^2+d(y,p)^2>2R^2=\diam(V_\alpha)^2.\]
    Therefore, if $d(x,x_\alpha)>R_0$ and $V_\alpha$ intersects $P^{n-k}$, then $V_\alpha \in \cl{P^{n-k}}$.  Since $T \to \td X$ is a quasi-isometric embedding by the Milnor--Schwarz lemma, there is an $R_1$ such that if $d(0,\alpha)>R_1$, then $d(x,x_\alpha)>R_0$.  This completes the proof.
\end{proof}

\begin{figure}
    \centering

\tikzset{every picture/.style={line width=0.75pt}} 

\begin{tikzpicture}[x=0.75pt,y=0.75pt,yscale=-1,xscale=1]

\draw [line width=0.75]    (106.4,146.96) -- (232.8,146.96) ;
\draw [line width=0.75]    (162.6,206.16) -- (162.6,77.36) ;
\draw  [fill={rgb, 255:red, 155; green, 155; blue, 155 }  ,fill opacity=0.15 ][line width=0.75]  (323.2,61.36) .. controls (343.2,51.36) and (454.8,39.6) .. (473.6,68.56) .. controls (492.4,97.52) and (469.6,97.36) .. (511.2,154.96) .. controls (552.8,212.56) and (358.4,235.36) .. (338.4,205.36) .. controls (318.4,175.36) and (303.2,71.36) .. (323.2,61.36) -- cycle ;
\draw    (242.4,147.6) .. controls (282,117.9) and (270.63,167.16) .. (309.22,139.04) ;
\draw [shift={(310.4,138.16)}, rotate = 143.13] [color={rgb, 255:red, 0; green, 0; blue, 0 }  ][line width=0.75]    (10.93,-3.29) .. controls (6.95,-1.4) and (3.31,-0.3) .. (0,0) .. controls (3.31,0.3) and (6.95,1.4) .. (10.93,3.29)   ;
\draw  [fill={rgb, 255:red, 0; green, 0; blue, 0 }  ,fill opacity=1 ] (367.8,150.56) .. controls (367.8,149.23) and (366.73,148.16) .. (365.4,148.16) .. controls (364.07,148.16) and (363,149.23) .. (363,150.56) .. controls (363,151.89) and (364.07,152.96) .. (365.4,152.96) .. controls (366.73,152.96) and (367.8,151.89) .. (367.8,150.56) -- cycle ;
\draw  [fill={rgb, 255:red, 0; green, 0; blue, 0 }  ,fill opacity=1 ] (455.8,169.76) .. controls (455.8,168.43) and (454.73,167.36) .. (453.4,167.36) .. controls (452.07,167.36) and (451,168.43) .. (451,169.76) .. controls (451,171.09) and (452.07,172.16) .. (453.4,172.16) .. controls (454.73,172.16) and (455.8,171.09) .. (455.8,169.76) -- cycle ;
\draw  [fill={rgb, 255:red, 0; green, 0; blue, 0 }  ,fill opacity=1 ] (421.4,80.16) .. controls (421.4,78.83) and (420.33,77.76) .. (419,77.76) .. controls (417.67,77.76) and (416.6,78.83) .. (416.6,80.16) .. controls (416.6,81.49) and (417.67,82.56) .. (419,82.56) .. controls (420.33,82.56) and (421.4,81.49) .. (421.4,80.16) -- cycle ;
\draw [line width=1.5]    (108.8,183.12) -- (219.89,107.77) ;
\draw [shift={(223.2,105.52)}, rotate = 145.85] [fill={rgb, 255:red, 0; green, 0; blue, 0 }  ][line width=0.08]  [draw opacity=0] (11.61,-5.58) -- (0,0) -- (11.61,5.58) -- cycle    ;
\draw [line width=1.5]    (190.6,191.92) -- (127.94,92.9) ;
\draw [shift={(125.8,89.52)}, rotate = 57.67] [fill={rgb, 255:red, 0; green, 0; blue, 0 }  ][line width=0.08]  [draw opacity=0] (11.61,-5.58) -- (0,0) -- (11.61,5.58) -- cycle    ;
\draw [line width=1.5]    (365.4,150.56) .. controls (366.4,115.44) and (376.6,110.16) .. (416.6,80.16) ;
\draw [line width=1.5]    (365.4,150.56) .. controls (394.4,149.04) and (413.4,199.76) .. (453.4,169.76) ;
\draw  [dash pattern={on 4.5pt off 4.5pt}]  (419,80.16) .. controls (408.8,95.44) and (432.8,150.64) .. (455.8,169.76) ;
\draw [color={rgb, 255:red, 74; green, 74; blue, 74 }  ,draw opacity=1 ][line width=1.5]    (365.4,150.56) -- (365.58,110.64) ;
\draw [shift={(365.6,106.64)}, rotate = 90.26] [fill={rgb, 255:red, 74; green, 74; blue, 74 }  ,fill opacity=1 ][line width=0.08]  [draw opacity=0] (11.61,-5.58) -- (0,0) -- (11.61,5.58) -- cycle    ;
\draw [color={rgb, 255:red, 74; green, 74; blue, 74 }  ,draw opacity=1 ][line width=1.5]    (365.4,150.16) -- (403.2,150.23) ;
\draw [shift={(407.2,150.24)}, rotate = 180.11] [fill={rgb, 255:red, 74; green, 74; blue, 74 }  ,fill opacity=1 ][line width=0.08]  [draw opacity=0] (11.61,-5.58) -- (0,0) -- (11.61,5.58) -- cycle    ;

\draw (151.6,57.6) node [anchor=north west][inner sep=0.75pt]    {$T_pM$};
\draw (262,148) node [anchor=north west][inner sep=0.75pt]    {$\exp_{p}$};
\draw (498.8,91.8) node [anchor=north west][inner sep=0.75pt]    {$\tilde{X}$};
\draw (228.4,102.6) node [anchor=north west][inner sep=0.75pt]    {$H^{k}$};
\draw (81.2,84) node [anchor=north west][inner sep=0.75pt]    {$H^{n-k}$};
\draw (351.6,147.72) node [anchor=north west][inner sep=0.75pt]    {$p$};
\draw (424.4,67.72) node [anchor=north west][inner sep=0.75pt]    {$x$};
\draw (459.6,159.32) node [anchor=north west][inner sep=0.75pt]    {$y$};
\draw (386,178.92) node [anchor=north west][inner sep=0.75pt]    {$P^{n-k}$};
\draw (430,116.32) node [anchor=north west][inner sep=0.75pt]    {$d( x,y)$};
\draw (371.6,80.72) node [anchor=north west][inner sep=0.75pt]    {$P^{k}$};

\end{tikzpicture}
\caption{Illustration of the proof of Corollary \ref{NPC divergence}.}
\end{figure}
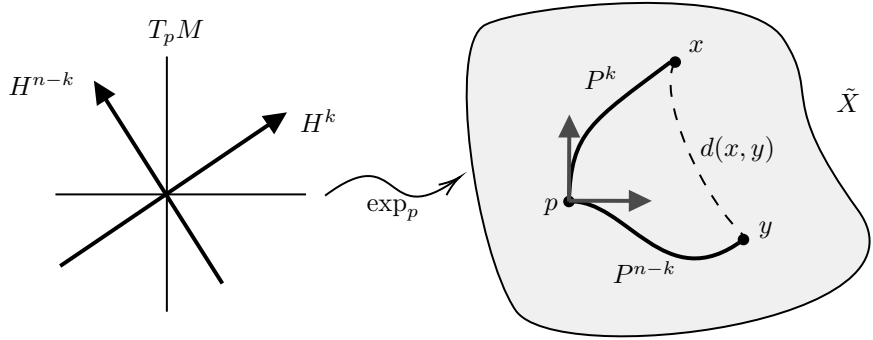

We have now constructed the first layer of the shell $\mc{B}_{R_1}(p)$ in Figure \ref{hyperbolic handle figure}, a handle centered at $x$.  The rest of the construction and proof works the same way it did in the hyperbolic case.

\begin{rmk}
    Note that the image of a ray from the origin in the tangent space under $\exp_{p}$ is the unique geodesic emanating from $p$ in that direction, and furthermore that when restricted to this ray $\exp_{p}$ is an isometry. So, we see that every ball (set of points distance $\leq R$ from $p$) or sphere (set of points exactly distance $R$ in the $p$) centered at $p$ in $\td X$ is the image under $\exp_{p}$ of a corresponding ball/sphere centered at the origin in the tangent space. Therefore, the union of a sphere in $\td X$ and the part of the hyperplane $\exp_{p}(H^{k})$ contained in the sphere is just the image under $\exp_{p}$ of the union of a sphere and a hyperplane portion in the tangent space. Since $\exp_{p}$ is a diffeomorphism, the two have the same homology, and so the proof of the hyperbolic case works here as well.
\end{rmk}

\subsection*{Discussion}

There is no obvious obstruction to generalizing the theorem further to manifolds which don't have non-positive curvature.  While our particular construction of handles with the required topological properties does not generalize, there doesn't seem to be a good reason why such handles cannot be constructed.  We therefore conjecture that Theorem \ref{NPC betti bound} holds for any manifold $X$.

Another notion worth discussing is the nontriviality of our results.  For example, if every Voronoi cell $V_\alpha$ had a $k$-handle of its own, then the inequality $\beta_k(E) \geq CF(|E|)$ would hold for \emph{every} union of Voronoi cells.  In that case, Theorem \ref{NPC betti bound} would not distinguish the Eden model from an arbitrary set of cells.

For hyperbolic tessellations, we know that this is never the case because Voronoi cells are convex.  In fact, since all intersections of Voronoi cells are convex and therefore contractible, a union of such cells is homotopy equivalent to its \v Cech complex.  In other words, the homology of such a union is determined by the lattice of intersections.

Nevertheless, it seems reasonable to suggest that for any manifold $X$, one can find fundamental domains in the universal cover that have this property:

\begin{conj}
  If $X$ is an aspherical closed manifold, then the action of $\pi_1(X)$ on $\td{X}$ admits a closed fundamental domain $D$ such that any nontrivial intersection of translates of $D$ under the $\pi_1(X)$-action is contractible.
\end{conj}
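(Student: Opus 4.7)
The plan is to construct $D$ as the closed dual block of a single vertex in a suitable $\pi_1(X)$-equivariant triangulation of $\td{X}$. Recall that given a triangulation $K$ of an $n$-manifold and a vertex $v$ of $K$, the closed dual block $D(v)$ in the first barycentric subdivision $K'$ is the union of all closed simplices $[b_{\sigma_0},\ldots,b_{\sigma_k}]$ with $v\in \sigma_0\subsetneq\sigma_1\subsetneq\cdots\subsetneq\sigma_k$; it is a PL $n$-ball. For distinct vertices $v,w$ one checks directly from this description that
\[D(v)\cap D(w)=D(\{v,w\}),\]
where the right-hand side is the closed dual block of the edge $\{v,w\}$. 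This is empty if $\{v,w\}$ is not an edge of $K$, and otherwise it is a PL $(n-1)$-ball (the cone on the PL $(n-2)$-sphere link of the edge), hence contractible.

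First I would invoke an equivariant triangulation theorem to obtain a $\pi_1(X)$-invariant smooth triangulation $K$ of $\td{X}$. If $\pi_1(X)$ happens to act transitively on the vertex set of $K$, we are done: pick any vertex $v_0$ and set $D=D(v_0)$. Then the translate $\phi\cdot D$ is $D(\phi\cdot v_0)$, the translates tile $\td{X}$ in the sense of the paper (meeting only along boundaries), and every nontrivial intersection $D\cap\phi\cdot D$ is either empty or the contractible PL $(n-1)$-ball $D(\{v_0,\phi\cdot v_0\})$.

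The hard part is achieving a single $\pi_1(X)$-orbit of vertices, which after passing to the quotient amounts to requiring that $X$ admit a one-vertex smooth triangulation. This is elementary in dimension $2$: every closed aspherical surface admits such a triangulation via its fundamental polygon. In higher dimensions it is more delicate, but encouragingly every closed smooth manifold admits a handle decomposition with a single $0$-handle (from a Morse function with a unique minimum), and hence a CW structure with a single $0$-cell. I would attempt to refine such a CW structure to a smooth triangulation with a single vertex using carefully chosen derived subdivisions, or by directly controlling the subdivision of the cocores of the higher-index handles. If successful, the dual-block machinery above produces $D$ immediately.

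If a single-vertex triangulation is not available in full generality, my fallback would be to relax to regular CW decompositions of $\td{X}$ on which $\pi_1(X)$ acts with a single orbit of $0$-cells, and to generalize the dual-block construction to that setting; the technical challenge is to verify that the resulting dual cells remain balls and that their pairwise intersections remain contractible, which likely requires PL regularity of the decomposition. I regard the existence of the requisite one-vertex decomposition, whether simplicial or regular CW, as the principal obstacle to the conjecture; it is conceivable that this is genuinely false for some exotic aspherical manifolds (e.g.\ those of Davis type without a PL structure), in which case the conjecture would need to be qualified.
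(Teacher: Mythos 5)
The statement you are addressing is left as an open conjecture in the paper; there is no proof to compare against. The authors only establish the analogous property in the constant-curvature case, where Voronoi cells of a lattice orbit are convex, so that all intersections are convex and hence contractible, and they explicitly flag the general case as something ``it would be interesting to construct.'' So your proposal should be judged as an attempt at an open problem, and as such it contains a genuine gap rather than a complete argument.

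The dual-block idea is a reasonable reduction, and (modulo the points below) it would work: in a combinatorial triangulation the dual block $D(\sigma)$ of a $k$-simplex is a PL $(n-k)$-ball, and $\bigcap_i D(v_i)=D([v_0,\dots,v_k])$ for \emph{arbitrary} collections of vertices, not just pairs --- you only check pairwise intersections, but the conjecture requires all finite intersections to be contractible, so you should state the general identity. The real gap is the input you need: a $\pi_1(X)$-equivariant combinatorial triangulation of $\td X$ with a single orbit of vertices. Your route to this via a ``one-vertex smooth triangulation of $X$'' cannot work as literally stated, because a simplicial complex structure on a closed manifold of positive dimension can never have one vertex (every edge needs two distinct vertices); what exists in low dimensions are one-vertex \emph{pseudo}-triangulations ($\Delta$-complex structures), and you would then have to verify that the lift to $\td X$ is an honest combinatorial triangulation on which the deck group acts simplicially --- true for the standard torus example, but not something you establish in general. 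The step from ``handle decomposition with one $0$-handle'' to such a structure is exactly where all the difficulty of the conjecture is concentrated, and you offer no mechanism for it. Finally, for topological aspherical manifolds that admit no PL structure (which exist), the dual-block machinery is unavailable outright, so even the strategy, not just its execution, would need to be revised there. In short: a plausible reduction, but the conjecture remains open under your approach.
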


\section{The relationship between FPP and Eden time}\label{eden vs fpp}
Consider the exponential-time FPP model $A(t)$.  As previously discussed, it is equivalent to the Eden model up to time rescaling.  In this section, we show (informally speaking) that as the Eden time $|A(t)|$ approaches infinity, so does the FPP time $t$. Therefore, when we show a statement holds with high probability for the geometric and topological behavior of the FPP model, this is also the case for the Eden model.

\begin{lem} \label{lem:expected}
    The size $|A(t)|$ of the Eden Model on a regular tree of vertex degree $n$ at (FPP) time $t$ satisfies the recurrence relation
    \[\bb E(|A(t)|) = 2-e^{-t} + (n-1)\int_0^t e^{-s}\bb E(|A(t-s)|)\dif s.\]
\end{lem}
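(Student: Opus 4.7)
The plan is to exploit the tree structure together with linearity of expectation. On a tree there is a unique path from the starting site $v_0$ to any vertex $w$, so the FPP infection time of $w$ equals the sum of the passage times along that path, excluding $\rho_{v_0}$ (since $v_0$ is infected at time $0$). Consequently, for $w$ at distance $k\geq 1$ from $v_0$, $\bb P(w\in A(t))=\bb P(S_k\le t)$, with $S_k=\rho_1+\cdots+\rho_k$ a sum of $k$ i.i.d.\ $\mathrm{Exp}(1)$ variables. Since a regular tree of degree $n$ has $n(n-1)^{k-1}$ vertices at each distance $k\ge 1$, linearity of expectation will give
\[\bb E(|A(t)|)=1+n\sum_{k=1}^\infty (n-1)^{k-1}\bb P(S_k\le t).\]

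Next I would apply the convolution identity $\bb P(S_k\le t)=\int_0^t e^{-s}\bb P(S_{k-1}\le t-s)\,\dif s$ for $k\ge 2$, which comes from conditioning on $\rho_1=s$ against its density $e^{-s}$, together with $\bb P(S_1\le t)=1-e^{-t}$. Substituting into the series above, swapping the sum with the integral (justified by monotone convergence, since every summand is non-negative), and re-indexing $j=k-1$ converts the $k\ge 2$ tail into
\[n\int_0^t e^{-s}\sum_{j=1}^\infty (n-1)^{j}\bb P(S_j\le t-s)\,\dif s=(n-1)\int_0^t e^{-s}\bigl(\bb E(|A(t-s)|)-1\bigr)\,\dif s,\]
the equality obtained by pulling out one factor of $n-1$ and recognizing the remaining series as $\bb E(|A(t-s)|)-1$.

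Combining this with the $k=1$ contribution yields
\[\bb E(|A(t)|)=1+n(1-e^{-t})+(n-1)\int_0^t e^{-s}\bb E(|A(t-s)|)\,\dif s-(n-1)(1-e^{-t}),\]
and collecting the scalar pieces via $1+n(1-e^{-t})-(n-1)(1-e^{-t})=2-e^{-t}$ gives the claimed recurrence. There is no genuine obstacle here: the only nontrivial step is the sum-integral interchange, which is immediate from non-negativity, so the entire argument is essentially bookkeeping once the explicit formula for the infection-time distribution is in hand.
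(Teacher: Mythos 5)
Your proof is correct, but it takes a genuinely different route from the paper's. The paper argues by a first-step (renewal) decomposition: it views the tree as the root together with its $n$ primary subtrees, conditions on the passage time $s$ of the root of each subtree against the density $e^{-s}$, and observes that once that root is infected the subtree grows for the remaining time $t-s$ like a fresh copy of the model with one primary subtree deleted, contributing $1+\frac{n-1}{n}\bb E(|A(t-s)|-1)$ in expectation; summing over the $n$ subtrees gives the recurrence directly. You instead compute $\bb E(|A(t)|)$ by summing $\bb P(w\in A(t))$ over all vertices, using the fact that on a tree the infection time of a vertex at distance $k$ is the Erlang sum $S_k$ of $k$ i.i.d.\ exponentials (valid because the unique simple path from the origin realizes the minimal passage time --- the same structural fact the paper uses implicitly), and then recover the recurrence by peeling one convolution factor off each $\bb P(S_k\le t)$ and re-summing; your bookkeeping with the $k=1$ term and the scalar pieces checks out, and the sum--integral interchange is indeed immediate from nonnegativity. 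The two arguments encode the same self-similarity, but yours has the bonus of producing the explicit series $\bb E(|A(t)|)=1+n\sum_{k\ge 1}(n-1)^{k-1}\bb P(S_k\le t)$ along the way, while the paper's conditioning argument is shorter and avoids manipulating an infinite series altogether.
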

\begin{proof}
    We can view an $n$-ary tree as a root vertex with $n$ disjoint subtrees coming out of it. Each of these subtrees is an $n$-ary tree minus one of its subtrees.
    
    In order for a subtree to contribute to $|A(t)|$, the root vertex of that subtree must be infected within $t$ seconds. Therefore, the expected contribution from one such subtree is 
    \[\int_0^t e^{-s}\Bigl(1+\frac{n-1}{n}\bb E(|A(t-s)|-1)\Bigr)\dif{s}.\] Here, $e^{-s}$ is the probability density function for the passage time of the root of the subtree, and $1+\frac{n-1}{n}\bb E(|A(t-s)|-1)$ is the expected size of the subtree once the root of the subtree has been infected: it consists of the root plus $n-1$ additional subtrees.
    
    Since there are $n$ such subtrees, we conclude that 
    \begin{align*}
    \bb E(|A(t)|) &= 1 + n\int_0^t e^{-s}\Bigl(1+\frac{n-1}{n}\bb E(|A(t-s)|-1)\Bigr)\dif{s} \\
    &= 1 + n\int_0^t e^{-s}\Bigl(\frac{1}{n}+\frac{n-1}{n}\bb E(|A(t-s)|)\Bigr)\dif{s} \\
    &= 2-e^{-t} + (n-1)\int_0^t e^{-s}\bb E(|A(t-s)|)\dif{s}. \qedhere
    \end{align*}
\end{proof}

\begin{lem} \label{lem:tree-expectation}
    Let $A(t)$ be the Eden model at time $t$ growing on an $n$-ary regular tree. There exists a constant $C > 0$ such that 
    \[\bb E(|A(t)|) < Ce^{(n-1)t}\]
    for all $t$.
\end{lem}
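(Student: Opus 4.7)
The plan is to apply a Gronwall-type argument to the recurrence from Lemma \ref{lem:expected}. Let $f(t) = \mathbb E(|A(t)|)$ and introduce the normalized quantity $g(t) = e^{-(n-1)t}f(t)$; the desired bound is equivalent to $g$ being uniformly bounded in $t$. Substituting $u = t-s$ in the integral rewrites the recurrence as
\[f(t) = 2 - e^{-t} + (n-1)e^{-t}\int_0^t e^u f(u)\,du,\]
and using $e^u f(u) = e^{nu}g(u)$ together with $f(t) = e^{(n-1)t}g(t)$ and the trivial estimate $2 - e^{-t} \leq 2$ turns this into
\[g(t) \leq 2 + (n-1)e^{-nt}\int_0^t e^{nu}g(u)\,du.\]

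Let $M(t) = \sup_{s \in [0,t]} g(s)$. Bounding $g(u) \leq M(t)$ inside the integral gives
\[g(t) \leq 2 + (n-1)e^{-nt} M(t)\cdot\frac{e^{nt}-1}{n} \leq 2 + \frac{n-1}{n}M(t).\]
The same inequality holds with $t$ replaced by any $s \in [0,t]$, and $M(s) \leq M(t)$ on the right; taking the supremum over $s \in [0,t]$ on the left thus yields $M(t) \leq 2 + \frac{n-1}{n}M(t)$, whence $M(t) \leq 2n$. Consequently $f(t) \leq 2n\,e^{(n-1)t}$, so any $C > 2n$ works.

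The only delicate point, and what I expect to be the main obstacle, is justifying that $M(t)$ is finite before performing the rearrangement; otherwise the inequality $M(t) \leq 2n$ is vacuous. I would resolve this by a crude a priori comparison: at each instant, each infected vertex contributes at most $n$ to the total rate of new infections, so $|A(t)|$ is stochastically dominated by a continuous-time Yule process in which every particle splits at rate $n$. The standard Yule computation gives $\mathbb E(|A(t)|) \leq e^{nt}$, hence $g(t) \leq e^t$ and $M(t) < \infty$ on every finite interval, which is all that is needed to close the argument.
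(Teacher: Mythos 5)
Your argument is correct, and it reaches the conclusion by a genuinely different mechanism than the paper, although both proofs run on the same engine: the recurrence of Lemma \ref{lem:expected} together with the estimate $\int_0^t e^{-ns}\,ds \leq 1/n$, which produces the contraction factor $\frac{n-1}{n}<1$. The paper closes the loop with a first-violation argument: it fixes $C$ so that the bound holds on $[0,n]$, assumes a first time $t_0>n$ at which equality occurs, and feeds the bound back through the recurrence to get a strict inequality $\bb E(|A(t_0)|) < \bb E(|A(t_0)|)$, using continuity of $\bb E(|A(t)|)$ in $t$ to locate $t_0$. You instead normalize by $e^{(n-1)t}$ and run a sup-norm bootstrap, obtaining $M(t) \leq 2 + \frac{n-1}{n}M(t)$ and hence the explicit bound $M(t)\leq 2n$. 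Your route has two advantages: it produces an explicit constant ($C=2n+1$ works), and it does not need continuity of $\bb E(|A(t)|)$. Its one genuine obligation is the a priori finiteness of $M(t)$, which you correctly identify and discharge via domination by a rate-$n$ Yule process (equivalently, $|\partial A|\leq n|A|$ gives $\frac{d}{dt}\bb E(|A(t)|)\leq n\,\bb E(|A(t)|)$, so $\bb E(|A(t)|)\leq e^{nt}$). Note that the paper's proof silently requires the same a priori input — finiteness and continuity of $\bb E(|A(t)|)$, and boundedness on $[0,n]$ to choose $C$ in the first place — so your explicitness here is a small improvement rather than extra baggage.
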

\begin{proof}
    Let $C$ be a constant such that 
    \[\bb E(|A(t)|) < Ce^{(n-1)t}\]
    for all $t \in [0, n]$. Suppose, for contradiction, that there is a $t$ such that $\bb E(|A(t)|) \geq Ce^{(n-1)t}$. Since both sides are continuous in $t$, there exists some $t_{0}$ such that $\bb E(|A(t_{0})|) = Ce^{(n-1)t_{0}}$, and $\bb E(|A(t)|) < Ce^{(n-1)t}$ for all $t < t_{0}$. Note that we must have $t_{0} > n$.
    
    Then by Lemma \ref{lem:expected},
    \begin{align*}
        \bb E(|A(t_0)|) &= 2-e^{-t} + (n-1)\int_{0}^{t_{0}} e^{-s}\bb E(|A(t_{0}-s)|)\dif{s} \\
        &< 2 + (n-1)\int_{0}^{t_{0}} e^{-s}Ce^{(n-1)(t_{0}-s)}\dif{s}\\
        &= 2 + C(n-1)e^{(n-1)t_{0}}\int_{0}^{t_{0}} e^{-ns}\dif{s}\\
        &= 2 + C(n-1)e^{(n-1)t_{0}}\left(\frac{1}{n} - \frac{1}{n}e^{-nt_{0}}\right)\\
        &< 2 + C(n-1)e^{(n-1)t_{0}} \cdot \frac{1}{n}\\
        &< Ce^{(n-1)t_{0}}\\
        &= \bb E(|A(t_{0})|)
    \end{align*}
    where the inequality in the second-to-last line follows from the fact that 
    \[2 = \frac{1}{n} \cdot 2n < \frac{1}{n}e^{n} < \frac{1}{n}e^{t_{0}} \leq \frac{1}{n}e^{(n-1)t_{0}}.\]
    This is a contradiction, therefore such a $t_0$ cannot exist. Therefore, we have 
    \[\bb E(|A(t)|) < Ce^{(n-1)t}\]
    for all $t$.
\end{proof}

\begin{lem}\label{Eden Time FPP Time Equivalence for Trees}
    Let $A(t)$ be the Eden model on an $n$-ary regular tree $T$.  Let $t$ denote the FPP time and let $|A(t)|$ denote the size of the Eden model at (FPP) time $t$. Then there is a constant $D'>0$ such that
    \[|A(t)|\leq (D')^t\]
    with high probability as $t\to\infty$. 
\end{lem}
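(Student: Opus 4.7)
The plan is to upgrade the expectation bound from Lemma \ref{lem:tree-expectation} to a high-probability bound by a direct application of Markov's inequality. Recall that Lemma \ref{lem:tree-expectation} gives a constant $C > 0$ with $\bb{E}(|A(t)|) < Ce^{(n-1)t}$. First, I would fix any $D' > e^{n-1}$, for concreteness $D' = e^n$. Then Markov's inequality gives
\[\bb{P}\bigl(|A(t)| > (D')^t\bigr) \leq \frac{\bb{E}(|A(t)|)}{(D')^t} < C\left(\frac{e^{n-1}}{D'}\right)^t,\]
and since $e^{n-1}/D' < 1$, this bound tends to $0$ as $t \to \infty$. Equivalently, $\bb{P}(|A(t)| \leq (D')^t) \to 1$, which is precisely the statement that $|A(t)| \leq (D')^t$ with high probability as $t \to \infty$.

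The only delicate point, if one can even call it that, is that $D'$ must be chosen strictly larger than the growth rate $e^{n-1}$ appearing in Lemma \ref{lem:tree-expectation}; at the critical value $D' = e^{n-1}$ Markov yields only a constant upper bound on the tail probability, not convergence to zero. Once $D'$ is taken strictly larger, the proof is immediate from Lemma \ref{lem:tree-expectation} and Markov, and no further input (concentration inequalities, branching process estimates, etc.) is needed. In particular, there is no real obstacle here: all the work has already been carried out in bounding the first moment in Lemmas \ref{lem:expected} and \ref{lem:tree-expectation}, and this lemma is essentially a clean corollary of them.
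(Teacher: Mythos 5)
Your proof is correct, and it takes a genuinely different (and shorter) route than the paper. The paper does not use Markov's inequality here at all: it couples the process with a modified one in which the entire ball of radius $\lfloor\log_n t\rfloor$ is infected instantly, decomposes the complement into $n^{\lfloor\log_n t\rfloor} < t$ independent subtrees, and then invokes the law of large numbers to argue that the total size of these subtrees is, with high probability, close to the number of subtrees times the expected size of one. That argument is actually the more delicate of the two, since the number of summands and their common distribution both vary with $t$, so the classical law of large numbers does not apply verbatim; making it airtight would require a triangular-array version or a second-moment estimate. Your route---fix $D' > e^{n-1}$, apply Markov to the first-moment bound of Lemma \ref{lem:tree-expectation}, and observe that $C\left(e^{n-1}/D'\right)^t \to 0$---sidesteps all of this, and in fact yields an exponentially decaying failure probability rather than mere convergence of the success probability to $1$. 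The one point of care you identify, that $D'$ must be taken strictly larger than $e^{n-1}$, is exactly right and is the only place the argument could go wrong. So the lemma is indeed a clean corollary of Lemmas \ref{lem:expected} and \ref{lem:tree-expectation}, and nothing in the paper's subsequent use of this lemma (the coupling in the proposition that follows it) depends on the particular method of proof.
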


\begin{proof}
    The perimeter of a ball of radius $k$ in $T$ has size $n^k$.  To obtain an upper bound on the size of the subtree infected at time $t$, we compare our instance of the Eden model to one in which everything within a ball of radius $\lfloor\log_{n}t\rfloor$ immediately gets infected (everything within this ball has passage time 0) and all other passage times are unchanged.  Clearly, this modified model has a strictly larger infected region at time $t$.
    
    On the perimeter of the $\lfloor\log_{n}t\rfloor$-ball, there are $n^{\lfloor\log_{n}t\rfloor} < t$ disjoint subtrees coming out, each with its root on the perimeter. Each of these subtrees is a regular $n$-ary tree minus one of its $n$ primary subtrees (subtrees adjacent to the central vertex).  By Lemma \ref{lem:tree-expectation}, the expected size of an $n$-ary tree at time $t$ is bounded above by $CD^t$, where $D = e^{(n-1)} > 0$. Since each of the subtrees is a $n$-ary tree minus one of its main subtrees, its expected size at time $t$ is also certainly bounded above by $CD^{t}$. 
    
    Then as $t\to\infty$, by the law of large numbers, the total size of the $n^{\lfloor\log_{n}t\rfloor}$ subtrees will converge to $n^{\lfloor\log_{n}t\rfloor}E< tE < tD^t$, where $E$ is the expected size of each subtree. Thus as $t\to\infty$, with high probability we have
    \[|A(t)| \leq \text{Vol}(B_{\lfloor\log_{n}t\rfloor}) + 2n^{\lfloor\log_{n}t\rfloor}E <\text{Vol}(B_{\lfloor\log_{n}t\rfloor})+2CtD^t.\]
    Now 
    \[\text{Vol}(B_{\lfloor\log_{n}t\rfloor})=\sum_{k=0}^{\lfloor\log_{n}t\rfloor}n^k=\frac{n^{\lfloor\log_{n}t\rfloor}-1}{n-1}<\frac{t-1}{n-1}<t.\]
    Therefore, we get that
    \[|A(t)| < t+2CtD^t \leq t+(2CD)^t < 2^t+(2CD)^t \leq (2+2CD)^t.\]
    Setting $D' = 2+2CD$ concludes the proof.
\end{proof}

\begin{prop}
    Let $t$ denote the FPP time and let $|A(t)|$ denote the size of the Eden model at (FPP) time $t$ on an infinite, locally finite, vertex transitive graph $G$. Then there exists some constant $D>0$ such that
    \[|A(t)|\leq D^t\]
    with high probability as $t\to\infty$. 
\end{prop}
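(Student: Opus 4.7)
The plan is to reduce the general case to the tree case already handled in Lemma \ref{Eden Time FPP Time Equivalence for Trees} by a coupling argument analogous to the one used in the proof of Lemma \ref{generalized lem 9}. Fix the common degree $n$ of $G$ and let $T$ be an $n$-ary rooted tree (with the root having degree $n$ and every other vertex having degree $n$ as well, i.e.\ $n-1$ children). We will build a graph homomorphism $f:T\to G$ that sends the root $o$ to the chosen initial site, and for each non-root vertex $v$ of $T$ sends the children of $v$ bijectively onto the neighbors of $f(v)$ other than $f(\text{parent}(v))$ (at the root, the $n$ children are sent bijectively onto the $n$ neighbors of the initial site). Such a map is essentially a ``universal cover'' of $G$ rooted at the initial vertex.

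Next I would couple passage times. Give each vertex $v \in T$ an i.i.d.\ $\mathrm{Exp}(1)$ passage time $\rho_v$. To each site $y \in G$ we then assign the passage time of $\tilde y$, where $\tilde y \in f^{-1}(y)$ is the vertex minimizing the ``passage time to just before $\tilde y$'' along the unique rooted path in $T$, exactly as in the coupling constructed in the proof of Lemma \ref{generalized lem 9}. Since $\tilde y$ is determined without looking at the passage time of $\tilde y$ itself, the induced passage times on $G$ are i.i.d.\ $\mathrm{Exp}(1)$, so this gives a valid coupling. An inductive argument on the FPP distance (the same one used in Lemma \ref{lem 3.11}) then shows that if $y$ is infected in $G$ at FPP time $\leq t$, then some $\tilde y \in f^{-1}(y)$ is infected in $T$ at time $\leq t$. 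Since $f$ is at most $n$-to-one on the FPP ball (actually each $y$ has a uniquely defined $\tilde y$), this gives
\[|A_G(t)| \leq |A_T(t)|,\]
where $A_G$ and $A_T$ denote the FPP balls on $G$ and $T$ respectively.

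Finally, Lemma \ref{Eden Time FPP Time Equivalence for Trees} provides a constant $D'>0$ such that $|A_T(t)| \leq (D')^t$ with high probability as $t\to\infty$, and therefore $|A_G(t)| \leq D^t$ with high probability for $D = D'$. The main obstacle is making the coupling precise enough that the induced passage times on $G$ really are i.i.d.\ exponential and the injection of infected sites into $A_T(t)$ goes through; but this is essentially identical to the argument already executed in Lemma \ref{generalized lem 9}, with the only difference that here we branch at the root with $n$ children rather than $n-1$ (which only changes the tree degree by one and does not affect the exponential-growth conclusion of Lemma \ref{Eden Time FPP Time Equivalence for Trees}).
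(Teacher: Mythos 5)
Your proposal is correct and follows essentially the same route as the paper: couple $G$ with a single rooted tree via the universal-cover-style homomorphism, transfer passage times by the same minimization rule as in Lemma \ref{generalized lem 9}, invoke the inductive argument of Lemma \ref{lem 3.11} to inject the infected set of $G$ into that of the tree, and finish with Lemma \ref{Eden Time FPP Time Equivalence for Trees}. The only cosmetic difference is your explicit remark about the root having $n$ rather than $n-1$ children, which, as you note, does not affect the exponential bound.
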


\begin{proof}
    Let $d=\deg G$. We use basically the same coupling trick we used in the proof of Lemma \ref{generalized lem 9}. But this time we couple $G$ with just one $(d-1)$-ary tree $T$, equipped with exponential i.i.d.\ passage times with mean $1$ on its nodes. The origin $o\in G$ of the infection is assigned passage time $0$ since it gets infected instantly. We consider a rooted $d$-ary tree $T$ with its root mapped to $O$ via the natural map $f:T\to G$. For each site $p\in G$, its passage time $\rho_p$ is defined to be $\rho_{\td{p}}$, where $\td{p}$ minimizes the passage time to just before $\td{p}$ among all vertices in $f^{-1}(p)$.
    
    As before, this defines a coupling between the Eden models on $T$ and $G$, since the only thing that depends on other vertices is \textit{which} i.i.d.\ exponentially distributed weight corresponds to a given site of $G$. By the same argument as in the proof of Lemma \ref{lem 3.11}, for every site $p\in G$ infected at time $t$, there exists a $\td{p}\in f^{-1}(p)$ such that the passage time from $\td{p}$ to the root of the tree $T$ is at most $t$.
    
    Therefore, the Eden model $A_G(t)$ on the graph $G$ at time $t$ is in bijection with a subset of the Eden model $A_T(t)$ on the tree at time $t$. Then Proposition \ref{Eden Time FPP Time Equivalence for Trees} implies that with high probability as $t\to\infty$, we have
    \[|A_G(t)|\leq |A_T(t)|\leq D^t\]
    for some constant $D>0$.
\end{proof}

\begin{cor}
  As a function of the Eden time $s$, the FPP time $t$ satisfies $t \geq \log_D s$ with high probability.
\end{cor}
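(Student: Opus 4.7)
The plan is to invert the exponential upper bound of the preceding proposition by a direct substitution. To set up the inversion formally, first define $t(s) = \inf\{t \geq 0 : |A(t)| \geq s\}$, the FPP time at which the Eden model first reaches size $s$; since passage times are a.s.\ positive and distinct, this is equivalently the time of the $s$-th infection. The monotonicity of $|A(\cdot)|$ gives the elementary equivalence
\[\{t(s) \geq r\} = \{|A(r)| < s\}\]
up to an a.s.\ null event, for any real $r$.

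Next I would apply this equivalence with $r = \log_D s$, which tends to infinity as $s \to \infty$: the event $\{t(s) \geq \log_D s\}$ coincides (a.s.) with $\{|A(\log_D s)| < D^{\log_D s}\}$. For any $\epsilon > 0$, the preceding proposition supplies a threshold $T_0$ with $\mathbb{P}(|A(t)| > D^t) < \epsilon$ for all $t \geq T_0$; plugging in $t = \log_D s$ for $s$ large enough that $\log_D s \geq T_0$ immediately gives $\mathbb{P}(|A(\log_D s)| > s) < \epsilon$. This is the corollary, modulo one subtlety.

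The only real obstacle is converting the non-strict inequality $|A(\log_D s)| \leq s$ produced by the proposition into the strict inequality $|A(\log_D s)| < s$ equivalent to the conclusion. Since $|A(t)|$ is integer-valued and $s = D^{\log_D s}$ is an integer whenever $s$ is an integer Eden time, the event $\{|A(\log_D s)| = s\}$ may carry positive probability. I would resolve this by enlarging $D$ infinitesimally: for any $D' > D$, we have $(D')^t \geq D^t + 1$ for all $t$ sufficiently large, so $\mathbb{P}(|A(t)| < (D')^t) \geq \mathbb{P}(|A(t)| \leq D^t) \to 1$; substituting $t = \log_{D'} s$ then gives $\mathbb{P}(t(s) \geq \log_{D'} s) \to 1$. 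Since the constant $D$ furnished by the proposition is not canonical, this substitution costs nothing, and I expect no deeper difficulty beyond this bookkeeping.
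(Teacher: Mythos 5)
Your proposal is correct and follows essentially the same route as the paper: invert the exponential bound $|A(t)| \leq D^t$ from the preceding proposition by substituting $t = \log_D s$ and using monotonicity of $|A(\cdot)|$. Your extra care about the strict-versus-non-strict inequality at the boundary (absorbed by enlarging $D$) is a detail the paper's two-line proof silently glosses over, and it is handled correctly.
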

\begin{proof}
    Suppose that $t \leq \log_D s$.  Then at FPP time $\log_D s$, we have
    \[|A(\log_D s)| \geq t=D^{\log_D s}.\]
    But the probability of this converges to $0$ as $\log_D s \to \infty$.  Therefore the probability that $t \leq \log_D s$ converges to $0$ as $s \to \infty$.
\end{proof}
This is the result we needed.

\appendix 

\section{Quasi-isometry invariance of the isoperimetric profile} \label{appendix}

In this section, we prove that (up to a natural equivalence relation) the isoperimetric profile is an quasi-isometry invariant of infinite, locally finite, vertex-transitive graphs. First, we give some definitions. 

\begin{defn}
Let $T$ be an infinite, locally finite, vertex transitive graph equipped with the graph metric. We define the \deffont{growth function} $N_{T} \colon \mathbb{N} \to \mathbb{N}$ of $G$ as
\[N_{T}(n) = |B_{n}(x)|.\]
In other words, the growth function returns the size of a (closed) ball of radius $n$ centered at an arbitrary point in $T$; the choice of center $x$ does not matter since $G$ is vertex-transitive.
\end{defn}

Lemma \ref{size of balls} shows that for any infinite, locally finite, vertex-transitive graph $T$ where each vertex has degree $d$,
\[N_{T}(n) \leq d^{n+1}.\] 




We can give some bounds on how much the size of a finite set of vertices of a graph can change under quasi-isometry.

\begin{lem}\label{quasi-isometry size changes}
Let $T$, $W$ be infinite, locally finite, vertex-transitive graphs equipped with the graph metric, and $f \colon T \to W$ be a $(C, K)$-quasi-isometry. Let $S \subset T$ be a finite subset. Then
\[\frac{1}{N_{T}(CK)}|S| \leq |f(S)| \leq |S|.\]
\end{lem}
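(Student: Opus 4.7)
The plan is straightforward, as the result is essentially a counting argument driven by a fiber-size bound coming from the quasi-isometric embedding inequality.

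The upper bound $|f(S)| \leq |S|$ is immediate: $f$ is a function from $T$ to $W$, so its image on any finite set $S$ has cardinality at most $|S|$.

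For the lower bound, I first plan to control the size of each fiber $f^{-1}(y)$ for $y \in W$. Suppose $x, x' \in T$ satisfy $f(x) = f(x')$. Then $d_{W}(f(x), f(x')) = 0$, and substituting into the right-hand inequality of Definition \ref{Quasi-Isometry} gives
\[d_{T}(x, x') \leq C \cdot d_{W}(f(x), f(x')) + K = K \leq CK.\]
Thus every fiber $f^{-1}(y)$ has diameter at most $CK$ in the graph metric on $T$, so it is contained in a ball of radius $CK$ centered at any one of its elements. By vertex-transitivity of $T$, this ball has cardinality $N_{T}(CK)$, hence $|f^{-1}(y)| \leq N_{T}(CK)$ for every $y \in W$.

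To finish, I would partition $S$ by fibers:
\[S = \coprod_{y \in f(S)} \bigl(S \cap f^{-1}(y)\bigr),\]
bound each summand by $N_{T}(CK)$, and sum to get $|S| \leq |f(S)| \cdot N_{T}(CK)$, which rearranges to the desired lower bound. There is no real obstacle in this argument; the only content is the observation that the quasi-isometric embedding inequality forces fibers to have bounded diameter, after which the rest is a pigeonhole-style count.
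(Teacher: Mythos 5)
Your proposal is correct and follows essentially the same route as the paper: both bound the diameter of each fiber $f^{-1}(y)$ using the quasi-isometric embedding inequality, conclude $|f^{-1}(y)| \leq N_T(CK)$, and then count. Your final step of partitioning $S$ into fibers is a slightly more explicit version of the paper's pigeonhole count, and your use of the inequality $d_T(x,x') \leq C\,d_W(f(x),f(x')) + K$ matches the paper's stated Definition \ref{Quasi-Isometry} at least as faithfully as the paper's own proof does.
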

\begin{proof}
The inequality $|f(S)| \leq |S|$ is true for any function. For the other inequality, we prove a bound on the size of the preimage $f^{-1}(y)$ of any point $y \in W$. 

Suppose $x, x' \in T$ satisfy $f(x) = f(x') = y$. Since $f$ is a quasi-isometry,
\[\frac{1}{C}d_{T}(x, x') -K \leq d_{W}(f(x), f(x')) = 0,\]
hence 
\[d_{T}(x, x') \leq CK.\]
It follows that for any point $x \in f^{-1}(y)$, all of $f^{-1}(y)$ is contained in the $CK$-ball around $x$.  Such a ball contains $N_{T}(CK)$ points, so we have 
\[|f^{-1}(y)| \leq N_{T}(CK).\]
This means that $f$ sends at most $N_{T}(CK)$ points in $S$ to the same point in $f(S)$, hence $\frac{1}{N_{T}(CK)}|S| \leq |f(S)|$, as desired.
\end{proof}

Now, we define an equivalence relation on functions from the naturals to the positive reals:
\begin{defn}
    Let $f, g$ be functions from $\mathbb{N}$ to $\mathbb{R}^{+}$. We write $g \lesssim f$ if there is a constant $a>0$ such that
    \[g(n) \leq af(n)\]
    for all $n \geq 1$. If we also have $f \lesssim g$, we write $f \sim g$. 
\end{defn}
One can check that $\sim$ is indeed an equivalence relation.

To show that the isoperimetric profile is a quasi-isometry invariant, we want to be able to take an isoperimetric set (one that has minimal boundary given its size) in one space and produce something close to isoperimetric in another quasi-isometric space. While taking a quasi-isometry can ``scatter'' the points of the isoperimetric set, the fact that a quasi-isometry is coarsely surjective allows us to overcome this obstacle: we can take the $K$-ball around the image of an isoperimetric set to obtain something close to being isoperimetric. The following theorem formalizes this:

\begin{thm}\label{quasiisometry invariance of isoperimetric profile}
Let $T, W$ be infinite, locally finite, vertex-transitive graphs equipped with the graph metric. If $T, W$ are quasi-isometric, then we have $F_{T} \sim F_{W}$. 
\end{thm}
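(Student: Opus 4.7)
The plan is to prove $F_W \lesssim F_T$; the reverse inequality will then follow by symmetry, applied to a quasi-inverse of $f$.  Fix a $(C,K)$-quasi-isometry $f:T\to W$.  The core idea is, given an isoperimetric set $A \subseteq T$, to push it forward by $f$ and then thicken by $K$ to compensate for $f$ failing to be surjective on the nose.  Concretely, given $n \geq 1$, I set $m = n \cdot N_T(CK)$, choose $A \subseteq T$ with $|A| \geq m$ and $|\partial A| = F_T(m)$, and define $A' = B_K(f(A)) \subseteq W$.

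The volume estimate is quick: by Lemma \ref{quasi-isometry size changes}, $|f(A)| \geq |A|/N_T(CK) \geq n$, and since $A' \supseteq f(A)$, we get $|A'| \geq n$.  The heart of the argument is bounding $|\partial A'|$ in terms of $|\partial A|$.  Given $w \in \partial A'$, note that $d_W(w, f(A)) \leq K+1$, since $w$ is adjacent to some point of $A' = B_K(f(A))$.  Using coarse surjectivity of $f$, pick $x_w \in T$ with $d_W(f(x_w), w) \leq K$; then $d_W(f(x_w), f(A)) \leq 2K+1$, so the quasi-isometry estimate produces some $a \in A$ with $d_T(x_w, a) \leq C(2K+1)+K$.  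In particular $x_w \notin A$ (otherwise $d_W(w, f(A)) \leq K$, contradicting $w \notin A'$), and walking along a shortest $T$-path from $x_w$ into $A$ yields a vertex $y_w \in \partial A$ with $d_T(x_w, y_w) \leq C(2K+1)+K-1$.  Pushing forward once more gives a constant $L = L(C,K)$ with $d_W(w, f(y_w)) \leq L$, so $\partial A' \subseteq B_L(f(\partial A))$, and hence $|\partial A'| \leq N_W(L) \cdot |\partial A| = N_W(L) \cdot F_T(m)$.

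Combining the two estimates and using the subadditivity of $F_T$ from Proposition \ref{sublinearity} to absorb the factor $N_T(CK)$, we obtain
\[F_W(n) \leq |\partial A'| \leq N_W(L)\cdot F_T\bigl(n\cdot N_T(CK)\bigr) \leq N_W(L) \cdot N_T(CK) \cdot F_T(n),\]
which is precisely $F_W \lesssim F_T$ with explicit constant $N_W(L) \cdot N_T(CK)$.  Symmetry then completes the proof.

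The hard part will be the boundary transfer: carefully chaining coarse surjectivity, both directions of the quasi-isometry inequality, and a shortest-path argument to send each boundary vertex of $A'$ in $W$ to a nearby boundary vertex of $A$ in $T$.  The initial blow-up $m = n \cdot N_T(CK)$ is also essential; starting with a set of size only $n$ would guarantee $|A'| \geq n/N_T(CK)$, and since $F_W$ is nondecreasing, this would not bound $F_W(n)$ from above in the required way.  The sublinearity of $F_T$ is exactly what is needed to pay for this initial enlargement.
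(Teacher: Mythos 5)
Your proof is correct and follows essentially the same route as the paper's: both push an isoperimetric set of $T$ forward under $f$, thicken by $K$, bound the volume from below via Lemma \ref{quasi-isometry size changes}, bound the boundary of the thickened image by a constant times the boundary of the original set, and absorb the resulting constant using the subadditivity of the profile (Proposition \ref{sublinearity}). The only differences are cosmetic: you apply subadditivity to $F_T$ on the source side via the initial blow-up $m = n\cdot N_T(CK)$, whereas the paper starts with a set of size $n$ and applies subadditivity to $F_W$ on the target side (so the blow-up is not actually ``essential'' as you claim, just one of two equivalent bookkeeping choices); and your boundary estimate places $\partial A'$ inside an $L$-ball neighborhood of $f(\partial A)$ in $W$, while the paper pulls $\partial S_W$ back to an annulus around $S_T$ in $T$ and counts it layer by layer.
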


\begin{proof}
To differentiate between balls in the two graphs, we will write $B_{T}(x, R)$ and $B_{W}(y, R)$ for the closed balls of radius $R$ around $x \in T$ and $y \in W$, respectively.

Fix some $n \geq 1$, and let $S_{T} \subseteq T$ be a subgraph of size $n$ satisfying $|\partial S| = F_{T}(n)$. Let $f \colon T \to W$ be a $(C, K)$-quasi-isometry. We define
\[S_{W} = \bigcup_{y \in f(S_{T})} B_{W}(y, K).\]

Consider the edge boundary $\partial S_{W}$. For each $y \in \partial S_{W}$, there exists an $x \in T$ such that $d_{W}(f(x), y) \leq K$ (due to $f$ being a quasi-isometry). For each $y \in \partial S_{W}$, we choose such an $x$, and let $X \subseteq T$ denote the collection of these $x$'s. 

It is possible that for distinct points $y, y' \in \partial S_{W}$, we choose the same corresponding $x \in X$. If this is the case, then both $y, y'$ are contained in the closed $K$-ball in $W$ centered at $f(x)$. Therefore, the maximum number of points $y \in \partial S_{W}$ which correspond to the same point $x \in X$ is bounded above by $N_{W}(K)$ (the size of a closed $K$-ball in $W$). It follows that 
\begin{equation} \label{ineq1}
|\partial S_{W}| \leq N_{W}(K)|X|.
\end{equation}

For each $x \in X$, $f(x)$ is within distance $K$ of a point in $\partial S_{W}$. Each point in $\partial S_{W}$ is within distance $1$ of a point $S_{W}$, and each point in $S_W$ is within distance $K$ of some point in $f(S_{T})$. Therefore, for each $x \in X$, there exists some $x' \in S_{T}$ such that 
\[d_{W}(f(x), f(x')) \leq 2K+1.\]
Since $f$ is a $(C, K)$-quasi-isometry, it follows that 
\[d_{T}(x, x') \leq C(3K+1).\]

Therefore, $X$ is contained in the closed $C(3K+1)$-neighborhood around $S_{T}$; in other words, we have 
\[X \subseteq \bigcup_{x \in S_T} B_{T}(x, C(3K+1)).\]
Denote this union by $B_T(S_T, C(3K+1))$.

On the other hand, we claim that $X \cap S_{T}$ is empty. Suppose, for a contradiction, that there exists a point $x \in X \cap S_{T}$. Since $x$ is in $X$, there exists some $y \in \partial S_{W}$ such that $d_{W}(f(x), y) \leq K$. However, since $x$ is also in $S_{T}$, and $S_{W}$ contains all the points in the closed $K$-neighborhood around $f(S_{T})$, it follows that $y$ is in $S_{W}$. But this contradicts the assumption that $y \in \partial S_{W}$.

Since $X \cap S_{T}$ is empty, we have 
\[X \subseteq B_T(S_T, C(3K+1)) \setminus S_{T}.\]

For each $j$, let $\partial^j S_T=\partial B_T(S_T,j-1)$.  Then $\partial^{j} S_{T}$ contains all the points whose distance to the nearest point in $S_{T}$ is exactly $j$, hence we have 
\[B_T(S_T, C(3K+1)) \setminus S_{T} = \bigcup_{j =1}^{C(3K+1)} \partial^{j} S_{T}.\]
Moreover, every element of $\partial^{j+1} S_T$ is adjacent to $\partial^j S_T$, which implies that $|\partial^{j+1} S_T| \leq \deg(T)|\partial^j S_T|$.  Therefore,
\begin{align*}
    \left|\bigcup_{j =1}^{C(3K+1)} \partial^{j} S_{T}\right| &
    \leq \sum_{j =1}^{C(3K+1)} \left|\partial^{j} S_{T}\right|\\
    &\leq \sum_{j =1}^{C(3K+1)} \deg(T)^{j-1} |\partial S_{T}|\\
    &\leq \deg(T)^{C(3K+1)} F_{T}(n).
\end{align*}
Since $X$ is a subset of $\bigcup_{j =1}^{C(3K+1)} \partial^{j} S_{T}$, it follows that
\begin{equation} \label{ineq2}
|X| \leq \deg(T)^{C(3K+1)}F_{T}(n)
\end{equation}
as well. Furthermore, since $f(S_{T}) \subseteq S_{W}$, it follows from Lemma \ref{quasi-isometry size changes} that
\[n=|S_{T}| \leq N_{T}(CK)|f(S_{T})| \leq N_{T}(CK)|S_{W}|.\]
Using the fact that $F_{W}$ is sublinear, it follows that
\[F_{W}(n) \leq F_W(N_T(CK)|S_W|) \leq N_T(CK)F_{W}(|S_W|) \leq N_T(CK)|\del S_W|.\]
Using \eqref{ineq1} and \eqref{ineq2}, we conclude that
\[F_{W}(n) \leq N_{T}(CK) N_{W}(K) \deg(T)^{C(3K+1)} F_T(n).\]
In particular, $F_{W} \lesssim F_{T}$.

Since $T$ and $W$ are quasi-isometric, there also exists a quasi-isometry $g \colon W \to T$. The proof above then shows that $F_{T} \lesssim F_{W}$, hence $F_{T} \sim F_{W}$.
\end{proof}


\subsection*{Acknowledgements}
This paper was written as part of a project for the UCSB REU.  We thank Katherine Merkl for many inspiring conversations, Maribel Bueno Cachadina for organizing the REU, and everyone else in the program for being extremely friendly and supportive. The authors were supported in part by NSF DMS-1850663 and a research fellowship from the College of Creative Studies at UCSB.

\subsection*{Conflict of interest statement}
On behalf of all authors, the corresponding author states that there is no conflict of interest.

\bibliographystyle{amsplain}
\bibliography{eden}

\end{document}